\long\def\beginskip#1\endskip{}
\def\endskip{}
\newtheorem{theorem}{Theorem}
\newtheorem{proposition}{Propositon}
\newtheorem{lemma}{Lemma}
\newtheorem{remark}{Remark}
\newtheorem{definition}{Definition}
\newtheorem{corollary}{Corollary}
\newcommand{\diam}{\text{diam}}
\newcommand{\Var}{\mathop{\rm Var}\nolimits}
\newcommand{\eps}{\varepsilon}
\newcommand{\RR}{\mathbb{R}}
\renewcommand{\a}{\alpha}
\renewcommand{\epsilon}{\varepsilon}
\newcommand{\1}{\mathbf 1}
\begin{document}

\begin{frontmatter}
\title{Asymptotic frequentist coverage properties of Bayesian credible sets for sieve priors}
\runtitle{frequentist coverage of credible sets}

\begin{aug}
\author{\fnms{Judith} \snm{Rousseau}\thanksref{t2,m1,m2}\ead[label=e1]{rousseau@ceremade.dauphine.fr}},
\author{\fnms{Botond} \snm{Szabo}\thanksref{t1,m3,m4}\ead[label=e2]{b.t.szabo@math.leidenuniv.nl}}

\thankstext{t2}{The project was partially supported by the ANR IPANEMA, the labex ECODEC.}
\thankstext{t1}{The research leading to these results has received funding from the European Research Council under ERC
Grant Agreement 320637. Research partially supported by Netherlands Organization for Scientific Research NWO.}

\runauthor{Rousseau and Szabo}

\affiliation{ Oxford University \thanksmark{m1}, and Universit\'e Paris Dauphine \thanksmark{m2}, and Budapest University of Technology\thanksmark{m3}, and Leiden University\thanksmark{m4}}

\address{Statistics department, Oxford University\\
24-29 St Giles', Oxford OX1 3LB, UK\\
\printead{e1}
}
\address{Leiden University,\\
Mathematical Institute,\\
Niels Bohrweg 1,
Leiden, 2333 CA,\\
The Netherlands\\
\printead{e2}
}
\end{aug}

\begin{abstract}
We investigate the frequentist coverage properties of Bayesian credible sets in a general, adaptive, nonparametric framework. It is well known that the construction of adaptive and honest confidence sets is not possible in general.  To overcome this problem we introduce an extra assumption on the functional parameters, the so called ``general polished tail'' condition. We then show that under standard assumptions both the hierarchical and empirical Bayes methods results in honest confidence sets for sieve type of priors in general settings and we characterize their size. We apply the derived abstract results to various examples, including the nonparametric regression model, density estimation using exponential families of priors, density estimation using histogram priors and nonparametric classification model, for which we show that their size is near minimax adaptive with respect to the considered specific semi-metrics. 
\end{abstract}

\begin{keyword}[class=AMS]
\kwd[Primary ]{	62G20, 62G05}
\kwd[; secondary ]{62G08,62G07}
\end{keyword}

\begin{keyword}
\kwd{Uncertainty quantification, coverage, posterior contraction rates, adaptation, empirical Bayes, hierarchical Bayes, nonparametric regression, density estimation, classification, sieve prior}
\end{keyword}

\end{frontmatter}

\section{Introduction}
Uncertainty quantification is of key importance in statistical sciences. Estimators without proper uncertainty quantification have only limited practical applicability, since they contain only limited amount of information about their accuracy. In statistics uncertainty about an estimator is described with the help of confidence sets. Confidence statements are then widely used in statistical practice for instance in hypothesis testing. The construction of confidence sets can be, however, very challenging, especially in complex, nonparametric problems.

A very popular aspect of the Bayesian approach is the built-in, straightforward way of quantifying uncertainty, in particular  with the help of credible sets, i.e. sets with prescribed (typically $95\%$) posterior probability. By accumulating large fraction of the posterior mass these sets describe the remaining uncertainty of the Bayesian procedure. Due to the existing computational machinery of Bayesian techniques (eg. MCMC, ABC,... etc) these sets are widely used in practice for uncertainty quantification. However, only  little is known about their theoretical properties. In parametric models following the celebrated Bernstein-von Mises theorem, credible sets are asymptotically confidence sets as well, laying the base of the practical applicability of the Bayesian approach in simple models, see for instance \cite{vaart_1998}.

However, in nonparametric and high-dimensional models the question is still unanswered about how much we can trust Bayesian credible sets as a measure of confidence in the statistical procedure from a frequentist perspective. The first results in the nonparametric paradigm were discouraging, showing that the Bernstein-von Mises theorem does not hold in general, i.e. even in the standard Gaussian white noise model using conjugate Gaussian priors the resulting credible sets have frequentist coverage tending to zero, see \cite{cox:1993,freedman:99}.

Since then the investigation of frequentist coverage properties of Bayesian credible sets has attracted a lot of attention in nonparametric problems.  Various approaches were proposed to solve this problem. In \cite{knapik,yoo:2016} the authors verified that by slightly undersmoothing the prior one can still achieve credible sets with good frequentist coverage and minimax size in the same setup as \cite{cox:1993}. Another possibility is to consider weaker, negative Sobolev-norms and derive the Bernstein-von-Mises theorem in the corresponding Sobolev space, see \cite{leahu:11,castillo:nickl:2013,castillo:nickl:2014}.

The preceding results are all based on the knowledge of the regularity of the true underlying function, which is in practice generally not available. A more challenging problem is the construction of Bayesian based confidence sets in the adaptive setting where no information is available on the smoothness of the truth. This, however, turns out to be too much to ask for. 
In \cite{Low:97, cai:low:04,robins:2006,bull:2013} it was shown that it is impossible to construct adaptive confidence sets in general. 

More precisely assume that the true (functional) parameter $\theta_0$ belongs to some regularity or sparsity class $\Theta^{\beta}$, indexed by some (unknown) hyper-parameter $\beta$ belonging to some index set $B$.  When $\beta$ is unknown, the confidence set  $\hat{C}$ cannot depend on it and it is said to be optimal adaptive if  first it has uniform coverage: 
\begin{align}\label{coverage}
\liminf_n \inf_{\theta_0\in\cup_{\beta\in B}\Theta^{\beta}}P_{\theta_0}^{(n)}(\theta_0\in \hat{C})\geq 1-\alpha
\end{align}
and second its size is optimal within each parameter class  $\Theta^{\beta}$, i.e. for some universal constant $K>0$
\begin{align} \label{size}
\liminf_n \inf_{\beta\in B}\inf_{\theta_0\in\Theta^{\beta}}P_{\theta_0}^{(n)}(\sup_{\theta_1,\theta_2\in\hat{C}}d(\theta_1,\theta_2)\leq K r_{n,\beta})\geq 1-\alpha,
\end{align}
where $r_{n,\beta}$ is the minimax estimation rate within the class $\Theta^{\beta}$ and with respect to the semi-metric $d(.,.)$. 

As mentioned earlier it is impossible to satisfy both the coverage and the minimax size requirements on the confidence sets in general. To solve this problem additional assumptions were introduced on the parameter value $\theta_0$ making the construction of adaptive confidence sets possible by discarding certain inconvenient parameters $\theta_0$. A frequently applied assumption is self-similarity where it is assumed that the true parameter has similar ``local'' and ``global'' behaviour, see for instance \cite{picard:tribouley:2000,gine:nickl:2010,bull:2012,chernozhukov:2014,nickl:szabo:2014,sohl:trabs:2014}. Another approach is to discard the parameters which make it impossible to test between the classes $\Theta^{\beta}$. This approach was considered in various models in context of regularity classes in \cite{Hoffmann:Nickl:11,bull:2013,carpentier:2013} and in sparse high dimensional models \cite{nickl:geer:2013,carpentier:nickl:2015}.

It is a known fact that Bayesian credible balls associated to posterior distributions which concentrate at the minimax rate verify \eqref{size}, see \cite{hoffman:rousseau:hieber}. The question is then to understand their frequentist coverage and in particular to characterize subsets of $\cup_\beta \Theta^\beta$ over which \eqref{coverage} is verified as well. 

 In \cite{szabo:vdv:vzanten:13} the authors have generalized the self-similarity assumption  introducing the so called polished tail assumption, discussed in this article also in more detail. The polished tail (and the stronger self-similarity) assumption was then applied in nonparametric regression with rescaled Brownian motion prior \cite{sniekers:2015} and spline priors \cite{serra:2014, yoo:vaart:2017} and in the context of Gaussian white noise model with Gaussian priors constructing $L_{\infty}$-credible sets \cite{szabo:linfunc:2015}. Furthermore, an adaptive version of the nonparametric Bernstein-von Mises theorem was given in context of the Gaussian white noise model using conjugate Gaussian priors and spike-and-slab prior \cite{ray2017} under the self-similarity assumption. The polished tail assumption was then slightly extended by the implicit excessive bias assumption introduced in context of the Gaussian white noise model \cite{belitser:2014} and applied in sparse high dimensional models with empirical Bayes spike and slab type of priors \cite{belitser:nurushev:2015, castillo:szabo:2018} and with hierarchical and empirical horseshoe prior \cite{pas:etal::2016}. 

All the above mentioned papers consider specific choices of the model and the prior distribution and use explicit, conjugate computations which obviously have their limitations. Although these papers already shed lights on certain aspects of Bayesian uncertainty quantification, they do not provide a clear understanding of the underlying general phenomena. A general approach for understanding the coverage of credible sets is still missing. Besides for many nonparametric models and priors no conjugate computation is possible and therefore they can not be handled directly. In this work we aim to (partially) fill this gap and contribute to the fundamental understanding of this rapidly growing field. We derive abstract results for general choices of models and sieve type of priors, in the spirit of \cite{ghosal:ghosh:vdv:00,ghosal:vdv:07, rousseau:szabo:2015:main}. 

\subsection{Setup and Notations} \label{sec:notation}

We consider observations $\mathbf Y \in \mathcal Y$ distributed from $P_\theta^{(n)}$, $\theta \in \bar\Theta$, which are absolutely continuous with respect to a given measure $\mu$ with density $p_{\theta}^{(n)}$ and $n$ denotes the sample size or signal-to-noise ratio. We denote by $\ell_n(\theta) = \log p_{\theta}^{(n)}$ the log-likelihood and throughout the paper $\theta_0$ designates the true value of the parameter. We denote by $E_{\theta}^{(n)}$ and $V_{\theta}^{(n)}$ the expectation and the variance with respect to $P_\theta^{(n)}$, respectively. For two positive sequences $a_n$ and $b_n$ we write $a_n\lesssim b_n$ if there exists a constant $C>0$ such that $a_n\leq C b_n$ for every $n\in\mathbb{N}$, furthermore, we denote by $a_n\asymp b_n$ that $a_n\lesssim b_n$ and $b_n\lesssim a_n$ hold simultaneously. For $A,B\in\mathbb{R}^{n\times n}$ the inequality $A\leq B$ denotes that $A-B$ is positive semi-definite.

Let us consider a collection of finite dimensional models $\Theta(k)$
\begin{align}
\Theta=\cup_{k\in \mathbb{N}} \Theta(k),  \quad \Theta(k) \subset \mathbb R^{d_k} , \quad d_k \uparrow \infty,\quad k\in \mathbb{N},\label{def: model}
 \end{align}
 with $d_k \asymp k$. We asume that $\theta_0\in\bar{\Theta}$ with $\Theta \subset \bar{\Theta}$,  note that we do not necessarily assume that $\theta_0$ belongs to any of the models $\Theta(k)$, $k\in\mathbb{N}$, hence we allow for misspecification. These models are very popular and frequently used in practice, see for instance \cite{tsybakov,gine:nickl:2016} for a review. 

The parameter $k$ drives the sparsity or the regularity of the model. 
Finding the model $\Theta(k)$, which is the most appropriate for recovering $\theta_0$, requires additional information about the true parameter (e.g. regularity, sparsity,... etc) which is usually not available. Therefore a natural approach is to let the data decide about the optimal model $\Theta(k)$. In the Bayesian framework one can accomplish this by the hierarchical or the empirical Bayes approach.
In the hierarchical (or also referred to as full) Bayes approach one endows the hyper-parameter $k$ with a  prior distribution $\pi_{k}$ and conditionally on $k$, considers a prior distribution  $\pi_{|k}$ on $\theta \in \Theta(k)$, resulting in a two-level prior distribution $\pi$ on $\Theta $ defined by:
\begin{equation} \label{prior}
 k \sim \pi_{k} , \quad \theta | k \sim \pi_{|k} .
\end{equation}
 We denote the posterior distribution on $\Theta$ by $\pi( \theta | \mathbf{Y})$ and the conditional distribution of $\theta | (\mathbf Y, k)$ by $\pi_{|k}(\theta | \mathbf{Y})$.

In contrast to this in the empirical Bayes approach one constructs a frequentist estimator $\hat{k}_n$ for the hyperparameter $k$ and plugs it in into the conditional posterior distribution given $k$, i.e.
\begin{align*}
\pi_{|\hat{k}_n}(\theta| \mathbf{Y} )=\pi_{|k}(\theta | \mathbf{Y})\Big|_{k=\hat{k}_n},
\end{align*}
which is the empirical Bayes posterior distribution.

Models in the form of \eqref{def: model} are widely used in the Bayesian literature and under nonrestrictive assumptions the posterior distribution can optimally recover the true parameter $\theta_0$. In more details, it is common to assume that the true parameter belongs to some regularity class $\theta_0\in \Theta^{\beta}$ with some unknown regularity hyper-parameter $\beta$. Then it was shown for instance in \cite{arbel} and references therein that the hierarchical Bayes approach described above achieves optimal minimax contraction rate around the truth, in a large collection of cases,  without using any additional information about its unknown regularity, leading to an adaptive procedure, in the frequentist sense. In this article our focus is on the quality of Bayesian uncertainty quantification done via credible balls from a frequentist perspective. There are two main properties of interest  in a credible set from a frequentist perspective: the frequentist coverage and the expectation of its size under $P_{\theta_0}^{(n)}$, when $\theta_0$ is assumed to be the true value of the parameter. 
 In the literature the frequentist coverage properties of Bayesian credible sets constructed from sieve posteriors were only investigated for specific choice of priors and likelihoods, see for instance \cite{serra:2014,belitser:2014,yoo:2016}. In this article we present a general approach under which we can simultaneously investigate the frequentist properties of credible sets resulting from different choices of sieve priors and likelihoods.

We introduce some additional notations. Let  $B_k(\theta, u,d)$ denote the $d$-ball in $\Theta(k)$ with center $\theta$ and radius $u$ and $B_k^c(\theta, u,d)$ the complement of such a ball. Furthermore, let $\diam \big(S,d\big)$ denote the $d$-diameter of the set $S$, i.e. 
$$\diam \big(S,d\big)=\sup_{\theta,\theta'\in S} d(\theta,\theta').$$

We define the square distance of the truth from the set $\Theta(k)$ as
 $$b(k) = \inf\{ d^2(\theta_0, \theta) : \theta \in \Theta(k) \} .$$
  For simplicity we also extend the definition of the function $b$ on $[0, +\infty)$  by  $b(x) = b(k)$    for all $x \in [k, k+1)$ and $b(0) = +\infty$. Note that we allow $d(.,.)$ to depend on $n$, so that in this case $b(k)$ also depends on $n$. This will be the case in particular in the regression and in the classification examples, see Sections \ref{sec:reg} and \ref{sec:classif}, respectively. The normalized Kullback-Leibler divergence and variance of the log-likelihood-ratio are denoted by
$$KL(\theta_0, \theta) =\frac{1 }{ n }  E_{\theta_0}^{(n)}\left( \log\left( \frac{p_{\theta_0}^{(n)}}{p_{\theta}^{(n)} } \right)\right),\quad V( \theta_0, \theta) =\frac{1 }{ n }  V_{\theta_0}^{(n)}\left( \log\left( \frac{p_{\theta_0}^{(n)}}{p_{\theta}^{(n)} } \right)\right),$$
respectively. 
We denote by $N(\eps,A,d)$ the entropy, i.e. the number of $\eps$-radius $d$-balls needed to cover the set $A$. Throughout the paper, $c$ and $C$ denote global constants whose value may change one line to another.

  \section{Main results} \label{sec:newtrunc}
In this section we investigate the frequentist properties of Bayesian credible sets resulting from the hierarchical and the empirical Bayes procedures. We consider the general setting described in Section \ref{sec:notation} and introduce general, abstract conditions under which credible sets have honest frequentist coverage and rate adaptive size. The derived results will be applied in  Section \ref{sec:applis} for various choices of sampling and prior models.

Using the posterior distribution $\pi(\theta | \mathbf Y)$, be it hierarchical or empirical, we construct the Bayesian credible sets as balls centered around some estimator $\hat\theta_n$ (typically the posterior mean, mode or median) $\hat C(\alpha) = \{\theta:\,d(\theta,\hat\theta_n)\leq r_\a \}$ where $\a \in (0,1)$ and $r_\a$ is the radius of the ball and satisfies
\begin{align}
r_\a = \arg\inf_{r>0}\{\pi(\theta:\,d(\theta,\hat\theta_n)\leq r | \mathbf Y )\geq  1-\a\}. \label{def: HBcred}
\end{align}
 In our analysis we also introduce some additional flexibility to the credible sets by allowing them to be blown up by a factor $L>0$ resulting in 
\begin{align*}
\hat{C}(L,\a)=\{\theta:\, d(\theta,\hat\theta_n)\leq L r_\a\}.
\end{align*}
We show that these inflated sets (for sufficiently large blow up factor $L$) will have frequentist coverage tending to one and at the same time their size almost optimal in a minimax sense under some additional restrictions on the parameter $\theta_0$.

In the Gaussian white noise model with Gaussian prior, \cite{szabo:vdv:vzanten:13} shows that  a key idea to obtain good coverage is that a trade-off between bias and variance is realized, so that the \textit{correct } value of $k$ (or set of values) is selected either under the posterior $\pi_k(k | \mathbf Y) $ or the empirical estimator $\hat{k}_n$. 

To generalize this idea in non Gaussian setups, let us consider the index set $\mathcal{K}\subseteq\mathbb{N}$ and define for each $\theta_0 \in \bar\Theta$,
 \begin{equation}\label{kn-epsk}
\epsilon_n^{2} (k ) = b(k) + \frac{ k \log n}{ n } , \quad \mbox{and } \quad  k_n = \inf\{ k\in\mathcal{K} :\, b(k) \leq k (\log n)/n \},
 \end{equation}
 and 
$\mathcal K_n (M) = \{ k\in\mathcal{K}:\, \epsilon_n(k) \leq M \epsilon_{n}(k_n) \}$. Note that in these notations $\theta_0$ is implicit since  $b(k)$ depends on $\theta_0$.

To control the frequentist coverage of $\hat C(L, \a)$, we need to restrict ourselves to a subset of $\bar\Theta$, in a manner similar to \cite{szabo:vdv:vzanten:13}, generalizing their idea outside the white noise model with empirical Gaussian process  prior. We introduce below the general polished tail condition which determines the subclass of functions for which frequentist coverage can be obtained. 

\begin{definition}
Let $\theta \in \bar\Theta$, we say that $\theta$ (or equivalently  its associated bias function $b(.) $) satisfies the general polished tail condition associated to the semi-metric $d(.,.)$ if there exist 
integers  $k_0, R_0> 1$ and a real $0< \tau <1$ such that 
 \begin{equation}\label{cond:tail:L}
b(kR_0) \leq \tau b(k), \quad  \forall  k_0  \leq k \leq k_n .
\end{equation}
For given   $k_0, R_0 $ and $\tau$, we denote by $\Theta_{0,n} ( R_0, k_0,\tau)$ the class of $\theta \in \bar\Theta$ satisfying \eqref{cond:tail:L} and 
$\Theta_{0} ( R_0, k_0,\tau) = \bigcap_n \Theta_{0,n} ( R_0, k_0,\tau)$.  

\end{definition}

We note that in the case where $d(.,.)$ is the $\ell_2$-norm, for instance in the Gaussian white noise model, the bias function is $b(k) = \sum_{j=k+1}^\infty \theta_{0,j}^2$. The polished tail condition in \cite{szabo:vdv:vzanten:13} reads as
\begin{align}
\sum_{j=N+1}^{\infty}\theta_{0,j}^2 \leq L \sum_{j=N+1}^{\rho N}\theta_{0,j}^2,\quad \forall N\geq N_0, \label{def: PT:original}
\end{align}
for some $N_0,L,\rho>0$ which is equivalent with our definition  of $\Theta_{0} ( R_0, k_0,\tau)$ (with $k_0=N_0$, $\tau=L/(L+1)$ and $R_0=\rho$). Our new definition, however, extends also to the case where the semi-metric $d(.,.)$ is substantially different from the $\ell_2$-norm.

The generalization of the usual bias and variance trade-off is by obtaining a trade-off between the bias (or more precisely the approximation error) $nb(k)$ and a prior penalization term $k\log n$ induced by the prior mass of small neighbourhoods:
$\pi_{|k}(\theta:\, d( \theta_{[k]}^o, \theta) \leq u_n) $, where  $u_n = o(1)$ and $\theta_{[k]}^o\in\Theta(k)$ can be viewed as a projection of $\theta_0$ on $\Theta(k)$, typically with respect to the semi-metric $d$ or the KL-divergence.
 Then typically if $u_n \asymp n^{-H}$ for some $H>0$, then 
$\log \pi_{|k}(\theta:\, d(  \theta_{[k]}^o, \theta) \leq u_n)  \asymp  - k \log n$, so that the set $\mathcal K_n (M) $ corresponds to values of $k$ for which this trade-off is achieved.

\begin{lemma}\label{rem:Kn}
For any $\theta_0\in\bar\Theta$ and $k\in \mathcal{K}_n(M)$ we have that $k\leq 2M^2k_n$. Furthermore for any $\theta_0\in\Theta_{0,n}(R_0,k_0,\tau)$ let us assume that there exists an $A_0>1$ such that
\begin{align}
\text{for all $k<k_0$ there exists } k'\in\{k_0,k_0+1,...,A_0k_0\},\,  \text{such that }\, b(k)\geq b(k'). \label{eq: cond:bias}
\end{align}
Then for every $k\in \mathcal{K}_n(M)$  we have $k\geq ck_n$, with $c= (R_0^{-m}/2) \wedge (2R_0^{m+1}\vee R_0^mk_0 A_0)$, where $m>0$ is the smallest integer satisfying $\tau^m \leq (8M^2 R_0)^{-1}$.
 \end{lemma}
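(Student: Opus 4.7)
The plan is to dispatch the two parts of the lemma with rather different arguments: the upper bound $k\le 2M^{2}k_{n}$ follows straight from the definitions, while the lower bound $k\ge c\,k_{n}$ requires iterating the polished tail condition.

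For the upper bound, if $k\in\mathcal{K}_{n}(M)$ then $\epsilon_{n}^{2}(k)\le M^{2}\epsilon_{n}^{2}(k_{n})$. I bound the left-hand side below by $k\log n/n$ (dropping $b(k)\ge 0$), and the right-hand side above by $2M^{2}k_{n}\log n/n$; the latter uses that the infimum defining $k_{n}$ is attained on $\mathcal{K}\subseteq\mathbb{N}$, so $b(k_{n})\le k_{n}\log n/n$. Dividing by $\log n/n$ yields the claim.

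For the lower bound I would argue by contrapositive: assume $k<ck_{n}$ and show $\epsilon_{n}^{2}(k)>M^{2}\epsilon_{n}^{2}(k_{n})$, hence $k\notin\mathcal{K}_{n}(M)$. The key reusable estimate is the iterated polished tail: whenever $\ell\ge k_{0}$ and $\ell R_{0}^{j-1}\le k_{n}$, applying \eqref{cond:tail:L} in turn at $\ell,\ell R_{0},\ldots,\ell R_{0}^{j-1}$ chains to $b(\ell R_{0}^{j})\le\tau^{j}b(\ell)$. In the case $k\ge k_{0}$, let $j$ be the largest integer with $kR_{0}^{j}<k_{n}$, so $kR_{0}^{j}\ge k_{n}/R_{0}$; the bound $c\le R_{0}^{-m}/2$ forces $kR_{0}^{m}<k_{n}/2$, so $j\ge m$. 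Iterating gives $b(k)\ge\tau^{-j}b(kR_{0}^{j})$, and the infimum definition of $k_{n}$ forces $b(kR_{0}^{j})>kR_{0}^{j}\log n/n\ge k_{n}\log n/(nR_{0})$. Combining with $\tau^{-j}\ge\tau^{-m}\ge 8M^{2}R_{0}$ (the defining property of $m$) yields $b(k)>8M^{2}k_{n}\log n/n$, whereas the first part gives $M^{2}\epsilon_{n}^{2}(k_{n})\le 2M^{2}k_{n}\log n/n$, so $\epsilon_{n}^{2}(k)\ge b(k)>M^{2}\epsilon_{n}^{2}(k_{n})$, the required contradiction.

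The case $k<k_{0}$ is handled via condition \eqref{eq: cond:bias}: I pick $k'\in\{k_{0},\ldots,A_{0}k_{0}\}$ with $b(k)\ge b(k')$ and rerun the previous argument on $k'$ in place of $k$. This works whenever $k'R_{0}^{m}<k_{n}$, a priori guaranteed by $A_{0}k_{0}R_{0}^{m}<k_{n}$. In the complementary regime $k_{n}\le A_{0}k_{0}R_{0}^{m}$ the lower bound is trivial, since $k\ge 1\ge k_{n}/(A_{0}k_{0}R_{0}^{m})$, which dominates $ck_{n}$ by the choice of $c$. I expect the main obstacle to be bookkeeping the chain of constants, in particular propagating $\tau^{m}\le 1/(8M^{2}R_{0})$ through the iteration in a way that beats $2M^{2}k_{n}\log n/n$ with a strict safety factor, together with the corner case $k<k_{0}$ that is precisely the reason auxiliary condition \eqref{eq: cond:bias} appears in the hypothesis.
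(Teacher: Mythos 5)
Your proposal is correct and follows essentially the same route as the paper's proof: the upper bound directly from the definitions of $\mathcal K_n(M)$ and $k_n$, and the lower bound by chaining the polished tail condition $m$ (or more) times to force $b(k)\geq \tau^{-m}k_n\log n/(R_0 n)\geq 8M^2k_n\log n/n > M^2\epsilon_n^2(k_n)$, with \eqref{eq: cond:bias} reducing $k<k_0$ to a $k'\in\{k_0,\dots,A_0k_0\}$ and the small-$k_n$ regime handled trivially via the second term in $c$. Your single "maximal $j$" iteration merely streamlines the paper's explicit three-case split (with $j^*(k)$), but the idea and the constants are the same.
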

The proof of the lemma is deferred to Section \ref{sec:rem:Kn} in the supplementary material \cite{rousseau:szabo:16:supp}.

\begin{remark}
Condition \eqref{eq: cond:bias} is very mild. It is easy to see that it holds automatically for nested sets $\Theta(k)$, where the bias function $k\mapsto b(k)$ is monotone non-increasing. Furthermore it can also be verified for models where nestedness occures only on given geometric subsequences $\Theta(k)\subset\Theta(A_0 k)\subset\Theta(A_0^2k)\subset...$, for instance histograms with regular bins, see Section \ref{sec: histogram}.
\end{remark}

We will show in Section \ref{sec:hierarchic}  that in the hierarchical Bayes approach the posterior distribution concentrates on $\mathcal K_n(M)$ for $M$ large enough if the true parameter satisfies the general polished tail condition \eqref{cond:tail:L}. A similar phenomenon occurs for the empirical Bayes method, i.e. the maximum marginal likelihood estimator $\hat k_n$ belongs to the set $\mathcal K_n (M)$ with high probability, see Section \ref{sec:empiric}.

\noindent

In the hierarchical prior case we also consider the following condition on the prior on $k$:

\textbf{H} The prior on $k$ satisfies
 \begin{equation}\label{cond:prior:trunc}
e^{- c_2 k \log (k)}\lesssim \pi_k(k) \lesssim  e^{- c_1 k },   \qquad k\in\mathcal{K},
 \end{equation} 
for some positive constants $c_1,c_2>0$.




In order to bound from below the frequentist coverage of $ \hat{C}(L , \a) $, we restrict ourselves to a subset of parameters $\Theta_0\subseteq \Theta_0(R_0, k_0,\tau)$ for some positive $R_0, k_0, \tau $ on which we consider the following assumptions, used both for the empirical Bayes and for the hierarchical Bayes approaches.
  
  \begin{itemize}
\item [\textbf{A0}] The centering point $\hat \theta_n$ satisfies that for all $\epsilon>0$ there exists $M_\epsilon>0$
 \begin{equation} \label{freq}
  \sup_{\theta_0 \in \Theta_0} P_{\theta_0}^{(n)} \left( d( \theta_0 , \hat \theta_n)\leq  M_\epsilon \epsilon_n(k_n) \right) \geq 1-\eps. 
 \end{equation}

 \item [\textbf{A1}] There exist $c_{3}, c_{4}, C >0$ such that for all $ \theta_0 \in \Theta_0$
 \begin{equation*} \label{KLcond:kn}
 \begin{split}
\pi_{|k_n}\left( KL(\theta_0,\theta)  \leq c_{3} \eps_n^2(k_n), \, V(\theta_0,\theta)\leq  C\eps_n^2(k_n)\right) \geq C^{-1} e^{-  c_{4}  k_n \log n}.
\end{split}
 \end{equation*}

\item [\textbf{A2}$$] Assume that there exist constants $J_0, J_1, c_5>0$, $c_6\in(0,1)$ such that the following conditions hold for all $k\leq \bar{K}_n$ 
\begin{enumerate}[label=(\roman*)]
\item  There exist sets $\Theta_n(k) \subset \Theta(k)$ satisfying
\begin{equation*} \label{sieve} 
\pi_{|k}\left( \Theta_n(k)^c \right) \leq C e^{- (c_2+c_{3}+c_{4}+2) n\eps_n^2(k_n) }.
\end{equation*}
\item There exist measureable  (in $\mathbf Y$) functions $\varphi_n(\theta)\in [0,1]$  such that
\begin{align*}
\sup_{\theta\in  \Theta_n(k)}&E_{\theta_0}^{(n)}\left( \varphi_n(\theta) \right) \leq e^{- c_5 n d^2(\theta_0,\theta) } ,\\
\sup_{\theta\in  \Theta_n(k)}\mathop{\sup_{\theta'\in\Theta_n(k):}}_{d(\theta',\theta)\leq c_6 d(\theta_0, \theta)}
 &E_{\theta'}^{(n)}\left( 1 - \varphi_n(\theta) \right) \leq e^{-c_5 n  d^2(\theta_0,\theta) }.
\end{align*}
\item For all $u \geq \max( J_0 \epsilon_n(k_n), J_1 \sqrt{k(\log n)/n}) $
\begin{equation*}
\log N\big( c_6 u , \Theta_n(k)\cap \{ u \leq d(\theta_0, \theta)\leq 2 u\} , d \big) \leq c_5 n u^2/2.
\end{equation*}
 \end{enumerate}

\item [\textbf{A3}] 
 For all $\gamma >0$, there exists $M_0>0$ such that for all $M_0 k_n \leq k \leq \bar K_n$
$$ \pi_{|k}\big( B_k(\theta_0,J_1 \sqrt{k (\log n)/n} ,d )\cap\Theta_n(k)\big) \leq e^{- (c_2+c_{3}+c_{4}+\gamma) n\eps_n^2(k_n)}.$$

  \item [\textbf{A4}]  Assume that for all $M,\eps>0$ there exist $c_7, c_8, c_9,c_{10},  \delta_0,B_\eps >0$  and  $r \geq 2$ such that the following conditions hold for all $k\in\mathcal{K}_n(M)$
\begin{enumerate}[label=(\roman*)]
\item There exists a parameter $\theta_{[k]}^o\in\Theta(k)$ satisfying
 \begin{equation*}
 \begin{split}
B_k(\theta_{[k]}^o,\sqrt{k/n},d )\cap \Theta_n(k) \subset S_n(k, c_{7}, c_8,r),
\end{split}
 \end{equation*}
where $S_n(k, c,c', r) = \Big\{\theta\in\Theta(k):$
 $$E_{\theta_0}^{(n)} \log\frac{p_{\theta_{[k]}^o}^{(n)}}{p_{\theta}^{(n)}}  \leq c k ,\, E_{\theta_0}^{(n)}\Big(\log\frac{p_{\theta_{[k]}^o}^{(n)}}{p_{\theta}^{(n)}}-E_{\theta_0}^{(n)}\log\frac{p_{\theta_{[k]}^o}^{(n)}}{p_{\theta}^{(n)}}\Big)^r\leq  c' k^{r/2} \Big\}.$$
\item Let $\bar{B}=\Theta_n(k) \cap B_k( \theta_0, (M_\epsilon +1) \epsilon_n(k_n),d)$. Then for every $\theta_0\in \Theta_0$
\begin{equation*}
P_{\theta_0}\Big(\max_{k \in \mathcal K_n(M)}\sup_{\bar{B} }\big(\ell_n(\theta) - \ell_n(\theta_{[k]}^o) -  B_\eps k \big) \leq 0\Big)\geq 1-\eps.
\end{equation*}
\item For every $\delta_{n,k} \leq \delta_0$
\begin{equation*}
\sup_{\theta\in  \bar{B} }  \frac{\pi_{|k}\big( B_k(\theta, \delta_{n,k} \sqrt{k/n} ,d )\cap\Theta_n(k)\big)}{\pi_{|k}\big( B_k(\theta_{[k]}^o, \sqrt{k/n},d )\big)} \leq c_{10} e^{ c_9 k \log (\delta_{n,k}) }.
\end{equation*}
\end{enumerate}

\end{itemize}

\begin{remark} \label{rk:A2}
The parameter $\bar{K}_n$ in assumptions \textbf{A2} and \textbf{A3} is chosen to be $Ak_n\log n$ for some large enough constant $A>0$ for the hierarchical Bayes method. In case of the empirical Bayes method it is the upper bound of the interval where the maximum marginal likelihood estimator is taken, i.e. $\hat{k}_n\in\{1,2,...,\bar{K}_n\}$, see \eqref{def: MMLE}. In this case $\bar{K}_n$ is typically taken to be $n^{H}$ for some $H\in(0,1/2)$.
 \end{remark}

\begin{remark}
One can also handle dimensions $d_k$ not linear in $k$. Then the definition of $\eps_n(k)$ has to be modified to $\eps_n(k)=b(k)+d_k(\log n)/n$ and the conditions in \textbf{A1}-\textbf{A4} have to be given with $k$ replaced by $d_k$.
\end{remark}

A brief explanation of the above conditions is in order. Assumptions \textbf{A1}, \textbf{A2}  are the standard prior small ball probability,
remaining mass, testing and entropy conditions, routinely used in the literature for determining the contraction rates of the posteriors, see for instance \cite{ghosal:vdv:07}.  Assumption \textbf{A3} is commonly considered when upper bounding marginal likelihoods, see for instance \cite{rousseau:07, mcvinishetal:09, rousseau:szabo:2015:main, castillo08}. These conditions are used to describe the behaviour of the posterior distribution on the hyper-parameter $k$ and derive upper bounds for the posterior contraction rates. Proving \textbf{A3} is quite simple in case the semi-metric $d$ is locally equivalent to the $\ell_2$-norm, however, it is quite challenging  in the context of mixture models, where the geometry of the $L_1$ metric is complex.

Assumption \textbf{A4} gathers three conditions, which are required to hold only over the models $k\in\mathcal{K}_n(M)$, this assumption is used to derive lower bounds for the radius of the credible sets. 

Assumption \textbf{A4} (i) requires that locally the (slightly modified) Kullback - Leibler divergence can be bounded by  the distance $d(.,.)$ (up to a multiplicative constant). Note that due to the model misspecification, i.e. typically $\theta_0\notin \Theta(k)$ we consider a projection $\theta_{[k]}^o$ of $\theta_0$ on $\Theta(k)$ for controlling the prior penalization term, see the discussion below \eqref{kn-epsk}. This is a rather mild assumption, the main difficulty here lies in obtaining a sharp upper bound on 
$ KL(\theta_0, \theta)- KL(\theta_0, \theta_{[k]}^o)$ and not only on $ KL(\theta_0, \theta)$.  It can be weakened by considering $c_7$ going to infinity, this would however  induce a bigger inflation of the radius of the credible ball $\hat C_n (L, \alpha)$. 

In assumption \textbf{A4} (ii) the log-likelihood ratio is uniformly controlled in a neighbourhood of $\theta_0$ with high probability. This is not such a stringent condition since the required control is not sharp at all. Indeed it is required that the log-likelihood ratio $\ell_n(\theta) -\ell_n(\theta_{[k]}^o)$ is bounded from above by $O(k)$, but note that under $P_{\theta_0}$ its expectation is equal to $-n(KL(\theta_0, \theta)-KL(\theta_0, \theta_{[k]}^o))\leq 0$, which acts as a pull back force, see the proof of Propositions \ref{prop: hist}, \ref{prop: loglin} and \ref{prop: classification} in the supplementary 
material \cite{rousseau:szabo:16:supp}. 

In condition \textbf{A4} (iii) note that since $\theta_k\in  B_k(\theta_0, (M_\epsilon +1)\epsilon_n(k_n),d)$ and since (typically) $\theta_{[k]}^o\in  B_k(\theta_0, \epsilon_n(k),d)$,  
$d(\theta_k, \theta_{[k]}^o) \leq (M_\epsilon +1)\epsilon_n(k_n)+ \epsilon_n(k) \leq (M+M_\epsilon+1)\epsilon_n(k_n)$ for $k\in \mathcal{K}_n(M)$, so that 
 condition \textbf{A4} (iii) requires that in case the ball around  any point in the vicinity of $\theta_{[k]}^o$  has substantially smaller radius than a $\sqrt{k/n}$ ball centered around $\theta_{[k]}^o$ then the prior mass of the ball is also substantially smaller. This is verified in particular when the distance $d(.,.) $ behaves locally like the Euclidean distance and the prior densities are bounded from below and above, locally. The intuition behind this condition is the following. To achieve high frequentist coverage for the credible set the prior can not put substantially more mass around the centering point than on a small  neighbourhood of the truth. Else the posterior would be even more concentrated around the centering point resulting in overly confident uncertainty statements. Since the centering point is random, but living in a close neighbourhood of the truth we require this condition to hold uniformly over the ball $B_k(\theta_0, (M_\eps+1)\eps_n(k_n),d)$. Conditions \textbf{A4} (ii) and (iii) are the most demanding assumptions, because they require non trivial upper bounds on prior masses of $d$-balls.

Assumption \textbf{A0} is on the centering point and is satisfied typically for usual estimates such as the posterior mean, see the examples in Section \ref{sec:applis}.

In the literature different variations were considered of the standard conditions \textbf{A1} and \textbf{A2}, see for instance \cite{ghosal:ghosh:vdv:00,ghosal:vdv:07}.  Here we consider another version of \textbf{A2} (iii) (based on slicing of the sets $\Theta_n(k)$), which will be applied in the density estimation example with exponential families of priors. 

\begin{itemize}
\item [\textbf{A2}] (iiib) There exist a (possibly infinite) cover $B_{n,j}(k)$ of the set $\Theta_n(k)\cap \{\theta: d(\theta,\theta_0)\geq J_0(k) \eps_n(k_n)\}$ for $k\leq\bar{K}_n$,  such that
\begin{align}
&B_{n,j}(k)  \subset \Theta_n(k) \cap \{ d( \theta, \theta_0) > c(k,j) \epsilon_n(k_n) \}\quad\text{with}\label{cond: A4k'_1}\\
&\sum_{j} \exp\left( -\frac{ c_5}{2} n c(k,j)^2  \epsilon_n(k_n)^2 \right)  \lesssim e^{ -  (c_2+ c_{3} + c_4+2)n\epsilon_{n}^2(k_n) },\label{cond: A4k'_2} 
\end{align}
where $c_2 , c_{3} , c_4$ are defined in assumptions \textbf{H} and  \textbf{A1} and 
\begin{equation}
\log N( c_6 c(k,j) \epsilon_n(k_n), B_{n,j}(k) , d) \leq \frac{c_5  c(k,j)^2 n\epsilon_{n}(k_n)^2}{2}. \label{cond: A4k'_3}
\end{equation}
\end{itemize}

 In the next subsections we show that under the above assumptions together with the general polished tail restriction the credible sets resulting both from the hierarchical and the empirical Bayes procedures have optimal size and high frequentist coverage.
  
\subsection{Hierarchical Bayes approach}  \label{sec:hierarchic}

In this section we present the results for the hierarchical prior defined by \eqref{prior} satisfying assumption \textbf{H}. We show that under the general polished tail condition and the assumptions introduced in the preceding section the inflated credible set  $\hat C(L_n,\a)$ with $L_n\gtrsim \sqrt{\log n}$ has good frequentist properties, i.e. it has good frequentist coverage and we can characterize their size on $\Theta_0=  \Theta_0(R_0, k_0,\tau)$, $R_0>1$, $k_0\geq 1$ and $\tau<1$.

\begin{theorem}\label{th:coverage}
 Assume that conditions \textbf{H}, \textbf{A0}-\textbf{A4} and \eqref{eq: cond:bias} hold, with $\bar K_n = A k_n \log n$ and $A= c_2+ c_3+ c_4+1$ in assumption \textbf{A2}, then for every $\eps>0$ there exists a constant $L_{\eps,\a}>0$ such that
 \begin{equation}\label{eq:coverage}
\liminf_n \inf_{\theta_0 \in \Theta_0}P_{\theta_0}^{(n)} \left( \theta_0 \in \widehat C( L_{\eps,\a} \sqrt{\log n},\a)\right)  >1 - \epsilon.
 \end{equation} 
\end{theorem}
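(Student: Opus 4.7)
The plan is to reduce the coverage statement $\{\theta_0 \in \hat C(L\sqrt{\log n},\alpha)\} = \{d(\theta_0,\hat\theta_n) \leq L\sqrt{\log n}\, r_\alpha\}$ to a lower bound on $r_\alpha$. By assumption \textbf{A0}, $d(\theta_0,\hat\theta_n) \leq M_\epsilon \epsilon_n(k_n) \asymp M_\epsilon\sqrt{k_n\log n/n}$ on an event of $P_{\theta_0}^{(n)}$-probability $\geq 1-\epsilon$, uniformly over $\Theta_0$. Hence it suffices to prove $r_\alpha \geq c\sqrt{k_n/n}$ with probability at least $1-\epsilon$, since then $L\sqrt{\log n}\, r_\alpha \gtrsim L \sqrt{k_n\log n/n}$ dominates $d(\theta_0,\hat\theta_n)$ for a sufficiently large $L=L_{\epsilon,\alpha}$. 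This lower bound on $r_\alpha$ is equivalent to showing $\pi(B(\hat\theta_n, r)\mid \mathbf Y) < 1-\alpha$ for $r < c\sqrt{k_n/n}$ with high probability.

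To bound this posterior mass I would first establish that the posterior concentrates on the set of well-balanced indices $\mathcal K_n(M)$, i.e.\ $\pi(\mathcal K_n(M)^c \mid \mathbf Y) \to 0$ in $P_{\theta_0}^{(n)}$-probability for $M$ large. Writing $m_n(k)=\int e^{\ell_n(\theta)-\ell_n(\theta_0)}d\pi_{|k}(\theta)$ for the marginal likelihood at level $k$, the standard evidence argument of Ghosal--Ghosh--van der Vaart applied to \textbf{A1} yields $m_n(k_n) \gtrsim e^{-C k_n \log n}$ with high probability. The upper bound splits into three regimes: for $k<ck_n$, Lemma 1 together with the general polished tail condition forces $nb(k) \gg k\log n$, so a global test from \textbf{A2} together with the Kullback--Leibler excess rule out these indices; for $ck_n \leq k \leq \bar K_n$ outside $\mathcal K_n(M)$, assumption \textbf{A4} provides a prior-mass bound $\exp(-(c_2+c_3+c_4+\gamma)n\epsilon_n^2(k_n))$ on the $J_1\sqrt{k\log n/n}$-ball around $\theta_0$ which, combined with the tests and sieve bounds of \textbf{A1}--\textbf{A2}, dominates $m_n(k)$; for $k>\bar K_n=Ak_n\log n$, the geometric prior decay in \textbf{H} absorbs the remainder. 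Summing $\pi_k(k) m_n(k)$ over $\mathcal K_n(M)^c$ then gives a factor exponentially smaller than $\pi_k(k_n) m_n(k_n)$.

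Restricting to $k\in\mathcal K_n(M)$, on the \textbf{A0}-event I would bound
$$\pi_{|k}\bigl(B_k(\hat\theta_n, r)\mid \mathbf Y\bigr) \leq \frac{\int_{B_k(\hat\theta_n,r)\cap\Theta_n(k)} e^{\ell_n(\theta)-\ell_n(\theta_0)}\, d\pi_{|k}(\theta) + \pi_{|k}(\Theta_n(k)^c)}{\int e^{\ell_n(\theta)-\ell_n(\theta_0)}\, d\pi_{|k}(\theta)}.$$
The first half of \textbf{A5} gives $\ell_n(\theta)-\ell_n(\theta^o_{[k]})\leq B_\epsilon k$ uniformly on the numerator domain, allowing the likelihood to be pulled out as $e^{B_\epsilon k}e^{\ell_n(\theta^o_{[k]})-\ell_n(\theta_0)}$. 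The second half of \textbf{A5}, applied with $\theta=\hat\theta_n$ (which lies in $B_k(\theta_0,(M_\epsilon+1)\epsilon_n(k_n))$ by \textbf{A0}), bounds the remaining prior-mass ratio by $c_{10}\exp(c_9 k \log(r\sqrt{n/k}))$ times $\pi_{|k}(B_k(\theta^o_{[k]},\sqrt{k/n}))$. For the denominator, \textbf{A3} combined with a Chebyshev/Paley--Zygmund argument using the $r$-th moment bound yields $\int e^{\ell_n(\theta)-\ell_n(\theta_0)}\,d\pi_{|k}(\theta)\geq e^{\ell_n(\theta^o_{[k]})-\ell_n(\theta_0)-C_1 k}\pi_{|k}(B_k(\theta^o_{[k]},\sqrt{k/n}))$ with probability $1-o(1)$. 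After cancellation,
$$\pi_{|k}\bigl(B_k(\hat\theta_n,r)\mid \mathbf Y\bigr) \lesssim \exp\bigl((B_\epsilon + C_1)k + c_9 k \log(r\sqrt{n/k})\bigr),$$
which can be made smaller than $(1-\alpha)/2$ by choosing $r\leq c\sqrt{k/n}$ for a small constant $c=c(\alpha,\epsilon)$; Lemma 1 guarantees $k\asymp k_n$ on $\mathcal K_n(M)$, so this delivers the required $r_\alpha \geq c'\sqrt{k_n/n}$.

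The main obstacle is the model concentration step: calibrating the exponential bounds on $m_n(k)$ against the prior weights $\pi_k(k)$ across the three regimes. In particular, for indices $k$ slightly below $k_n$ where the polished tail condition only guarantees a geometric improvement of $b$, and for indices in the ``middle range'' of $\mathcal K_n(M)^c$ where \textbf{A4} with a suitable $\gamma$ must beat the evidence lower bound at $k_n$, the constants $c_2,c_3,c_4$ and the choice $\bar K_n = Ak_n\log n$ with $A=c_2+c_3+c_4+1$ have to be carefully tracked in order for the exponents to separate.
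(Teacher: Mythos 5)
Your proposal is correct and follows essentially the same route as the paper: reduce coverage via \textbf{A0} and the definition of $r_\a$ to showing the posterior puts mass less than $1-\a$ on balls of radius of order $\sqrt{k_n/n}$ around $\hat\theta_n$, prove posterior concentration of $k$ on $\mathcal K_n(M)$ from \textbf{H}, \textbf{A1}--\textbf{A2}, \textbf{A4} (the paper's Lemma \ref{lem:post:Kn}), and then on $\mathcal K_n(M)$ combine the likelihood-ratio control and prior-mass-ratio bound of \textbf{A5} with the \textbf{A3}-based evidence lower bound and Lemma \ref{rem:Kn}, exactly as in the paper's Lemma \ref{th:prior}. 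The only cosmetic difference is that you phrase the conclusion as a lower bound $r_\a\gtrsim\sqrt{k_n/n}$ rather than via Markov's inequality on the posterior mass of a $\delta\eps_n(k_n)/\sqrt{\log n}$-ball, which is equivalent.
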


\begin{remark} \label{rk}
In Theorem \ref{th:coverage} (and also in Theorem \ref{th:EmpBayesCoverage} below), the inflation of the radius is of order $\sqrt{\log n}$, which is an unpleasant feature of the result. We believe that this is a necessary inflation, at least for centering points $\hat \theta_n$ leaving in the bulk of the posterior distribution, like the posterior mean. Indeed as appears in the proof, see also Lemmas  \ref{lem:post:Kn} and \ref{lem:rate}, the posterior mass essentially lives on the sets of $k$ that achieves the balance $ k\log n \asymp n \epsilon_n^2(k)$, while an optimal behaviour would be to achieve the balance $k \asymp n \epsilon_n^2(k)$. This is a typical feature of hierarchical (or empirical)  Bayesian approaches with a hyper-prior on the model $k$ and is strongly related to the $\log n$ penalty induced by the marginal likelihood, as expressed in the re-known BIC approximation. This results in having the posterior distribution concentrate on values of $k$ that are too small, so that the bias  $b(k) $ dominates the statistical error within each model $\Theta(k)$ which is $O(k/n)$. The necessity of the $\sqrt{\log n}$ factor is  demonstrated in the context of the nonparametric regression model. In Propositions \ref{prop: UB:radius} and \ref{prop: UB:radius:HB} it is shown that without a $\sqrt{\log n}$ blow up the credible sets have coverage tending to zero for certain representative (typical) elements of the polished tail class. There are two ways to temper this. One can either follow \cite{Gao:Zhou:13} using a block prior on the components of  $\theta$ which groups together models in blocks and within each block shrinks very strongly the coefficients to 0 to ensure that the selected models under the posterior have a large enough number of components. An alternative method could be to find a centering point $\hat \theta_n$ which is rougher than the posterior, see Lemma \ref{lem: Lep:center} in the supplement.
\end{remark}

The proof of Theorem \ref{th:coverage}  is deferred to Section \ref{sec:proof:covHB}. A key step in the proof is understanding the asymptotic behaviour of $\pi_k(k |\mathbf Y)$. In particular we show that the posterior distribution accumulates most of its mass on   $\mathcal K_n (M)$,  where a trade-off between bias and prior-penalization or complexity (equivalent to the variance term in the Gaussian setup) is achieved. This is presented in the following lemma:

 \begin{lemma}\label{lem:post:Kn}
Take any $\eps>0$ and assume that conditions \textbf{H} and \textbf{A1-A3} hold. Then there exists a large enough constant $\bar{M}_\eps>0$ such that
 \begin{equation*}
 \sup_{\theta_0\in \bar\Theta} E_{\theta_0}^{(n)}\big( \pi_k( k \notin \mathcal K_n(\bar{M}_\eps) |\mathbf Y )\big)\leq  \eps. 
 \end{equation*}
 \end{lemma}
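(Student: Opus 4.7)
The plan is to adapt the Ghosal--Ghosh--van der Vaart evidence-plus-testing machinery to the hierarchical sieve prior. First I would lower-bound the marginal evidence
\[
D_n := \sum_{k\in\mathcal{K}} \pi_k(k) N_k(\mathbf Y),\qquad N_k(\mathbf Y):=\int_{\Theta(k)}\frac{p_\theta^{(n)}}{p_{\theta_0}^{(n)}}\pi_{|k}(d\theta),
\]
by retaining only the single term $k=k_n$. Combining the prior bound $\pi_k(k_n)\gtrsim e^{-c_2 k_n\log n}$ from \textbf{H}, the KL small-ball bound in \textbf{A1}, and Chebyshev applied to the variance $V(\theta_0,\theta)$, I obtain an event $\Omega_n$ with $P_{\theta_0}^{(n)}(\Omega_n)\geq 1-\eps$ on which $D_n\gtrsim e^{-C_0 n\eps_n^2(k_n)}$ for some $C_0=C_0(c_2,c_3,c_4)$.

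Next I would control the numerator $\sum_{k\notin\mathcal{K}_n(M)}\pi_k(k) N_k$ by splitting the bad indices into the tail $\{k>\bar K_n\}$, $\bar K_n = A k_n\log n$, and the moderate part $\{k\leq\bar K_n\}\setminus\mathcal{K}_n(M)$. For the tail, $E_{\theta_0}^{(n)}N_k=1$ together with the prior tail $\pi_k(k)\leq e^{-c_1 k}$ from \textbf{H} give an expected contribution $\lesssim e^{-c_1\bar K_n}$, which beats the $e^{C_0 n\eps_n^2(k_n)}$ inverse-denominator factor once $A$ is large. For moderate bad $k$ I decompose $\Theta(k)$ into the sieve complement $\Theta_n(k)^c$ (handled by \textbf{A1} with expected mass $\leq e^{-(c_2+c_3+c_4+2)n\eps_n^2(k_n)}$), the near ball $\Theta_n(k)\cap B_k(\theta_0,u_{0,k},d)$ with $u_{0,k}=\max(J_0\eps_n(k_n),J_1\sqrt{k\log n/n})$, and its far complement.

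The near ball is treated via the dichotomy $k\leq M_0 k_n$ vs $k>M_0 k_n$, where $M_0$ is the threshold from \textbf{A4} for a fixed large $\gamma$. If $k>M_0 k_n$ then \textbf{A4} directly bounds the near-ball mass by $e^{-(c_2+c_3+c_4+\gamma)n\eps_n^2(k_n)}$. If instead $k\leq M_0 k_n$ and $k\notin\mathcal{K}_n(M)$, then $k\log n/n\leq M_0\eps_n^2(k_n)$ forces $b(k)\geq (M^2-M_0)\eps_n^2(k_n)$ while $u_{0,k}\leq \max(J_0,J_1\sqrt{M_0})\eps_n(k_n)$, so for $M$ large every $\theta\in\Theta(k)$ lies outside the near ball and that piece is empty. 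The far complement is handled by global tests $\varphi_n=\max_{k,j,i}\varphi_n(\theta_{k,j,i})$ over a $c_6 u_j$-cover of the dyadic shells $\{u_j\leq d(\theta_0,\theta)\leq 2u_j\}\cap\Theta_n(k)$, $u_j=2^j u_{0,k}$, indexed by $k\leq \bar K_n$. Assumption \textbf{A2} yields $E_{\theta_0}^{(n)}[\varphi_n]\lesssim\sum_{k\leq\bar K_n}e^{-c_5 n u_{0,k}^2/2}\leq\eps$ for $M$ large, while the standard shell argument gives $E_{\theta_0}^{(n)}[(1-\varphi_n) N_k^{\mathrm{far}}]\lesssim e^{-c_5 n u_{0,k}^2/4}$.

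Combining the three contributions on $\Omega_n\cap\{\varphi_n\leq\eps\}$ and dividing by the evidence lower bound yields the claim $E_{\theta_0}^{(n)}[\pi_k(k\notin\mathcal{K}_n(M)\mid\mathbf Y)]\lesssim\eps$, provided the constants are chosen in the order $\gamma\to M_0\to M\to A$, all large enough and independent of $\theta_0$. The main obstacle is precisely verifying that every bad $k$ falls into one of the three favorable regimes (empty near ball, \textbf{A4}, tests) with a gain exceeding the penalty $C_0 n\eps_n^2(k_n)$; this is what drives the specific dichotomy above and the choice of $M$ in terms of $M_0, J_0, J_1$. Notably, the argument does not use \textbf{A5} or the polished tail condition, consistent with the lemma's statement holding over all of $\bar\Theta$.
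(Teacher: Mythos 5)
Your proposal follows essentially the same route as the paper's proof: a lower bound on the evidence at $k=k_n$ via \textbf{H}, \textbf{A1} and a Chebyshev-type argument (the paper invokes Lemma 10 of Ghosal and van der Vaart, 2007), Fubini plus the prior tail of \textbf{H} for $k\geq A k_n\log n$, and, for the moderate bad $k$, the same three-way split into sieve complement (\textbf{A1}), a near ball that is either empty or controlled by \textbf{A4}, and a far region handled by the tests and entropy bound of \textbf{A2} together with Markov's inequality. Your dichotomy $k\lessgtr M_0 k_n$ is an equivalent reformulation of the paper's dichotomy $b(k)\lessgtr k\log n/n$, and your overall ordering of constants is sound.

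One step is stated too weakly, though. In the regime $k\leq M_0k_n$, $k\notin\mathcal K_n(M)$ (the empty near-ball case), your far-region bound $e^{-c_5 n u_{0,k}^2/4}$ uses $u_{0,k}=\max\big(J_0\eps_n(k_n),J_1\sqrt{k\log n/n}\big)$, which can equal $J_0\eps_n(k_n)$ with $J_0$ the \emph{fixed} constant of \textbf{A2}; after dividing by the evidence lower bound $e^{-(c_2+c_3+c_4+O(1))n\eps_n^2(k_n)}$, this is small only if $c_5J_0^2/4$ exceeds $c_2+c_3+c_4+O(1)$, a relation between constants that is nowhere assumed, and no choice of $\gamma$, $M_0$, $M$ or $A$ repairs the bound as written, since $u_{0,k}$ does not depend on them. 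The repair lies inside your own argument: in this regime every $\theta\in\Theta(k)$ satisfies $d^2(\theta_0,\theta)\geq b(k)\geq (M^2-M_0)\eps_n^2(k_n)$, so all dyadic shells of radius below $\sqrt{M^2-M_0}\,\eps_n(k_n)/2$ are empty, and the effective exponent becomes of order $c_5M^2 n\eps_n^2(k_n)$, which dominates the penalty once $M$ is large (the entropy threshold of \textbf{A2} is still met at those radii, since $\max(J_0,J_1\sqrt{M_0})\eps_n(k_n)\leq M\eps_n(k_n)/\sqrt 2$ for large $M$). This is precisely the paper's device of starting the slicing at radius $M\eps_n(k_n)/\sqrt 2$ when $b(k)>k\log n/n$; with that one adjustment your proof coincides with the paper's.
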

 
 The proof is presented in Section \ref{sec:proof:lemKn} of the Supplementary material. Lemma \ref{lem:post:Kn} is similar in spirit to Theorem 2.1 of \cite{rousseau:szabo:2015:main}, however the definition of $\epsilon_n(k)$ being different in both paper, the proofs of both results are significantly different.

The following lemma states that $\epsilon_n(k_n)$ corresponds to the posterior concentration rates, hence $\hat \theta_n $ can be any random point of the posterior distribution or depending on $d(.,.)$ the posterior mean or other posterior summary.

 \begin{lemma}\label{lem:rate}
 Assume that  conditions  \textbf{H} and \textbf{A1-A3} hold. Then for every $\eps>0$ there exists $C_\eps>0$ such that 
  $$\sup_{\theta_0\in \bar\Theta} E_{\theta_0}^{(n)}\big(\pi\left( d(\theta, \theta_0)\geq C_{\eps}\epsilon_n(k_n)|\mathbf Y \right)\big)\leq\eps.$$
 \end{lemma}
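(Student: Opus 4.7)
The plan is to run the standard evidence-plus-tests scheme of \cite{ghosal:ghosh:vdv:00}, but piggy-backing on Lemma~\ref{lem:post:Kn} so that we only ever have to work with the posterior restricted to $k\in\mathcal K_n(M)$. Concretely, fixing $\eps>0$ I first choose $M$ via Lemma~\ref{lem:post:Kn} so that $E_{\theta_0}^{(n)}\pi_k(k\notin\mathcal K_n(M)\mid\mathbf Y)\leq \eps/3$ uniformly in $\theta_0$, which reduces the goal to bounding the expected posterior mass of $B_n\cap\{k\in\mathcal K_n(M)\}$ with $B_n=\{\theta:d(\theta,\theta_0)\geq C_\eps\epsilon_n(k_n)\}$. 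Lemma~\ref{rem:Kn} gives $k\leq 2M^2 k_n$ on $\mathcal K_n(M)$, so $\mathcal K_n(M)$ has polynomial cardinality and $\epsilon_n(k)\leq M\epsilon_n(k_n)$.

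Next I would produce the usual evidence lower bound. Combining \textbf{H} with the KL small-ball bound in \textbf{A1} yields
$$\pi\bigl(\{KL(\theta_0,\theta)\leq c_3\epsilon_n^2(k_n),\,V(\theta_0,\theta)\leq C\epsilon_n^2(k_n)\}\bigr)\gtrsim e^{-(c_2+c_4)n\epsilon_n^2(k_n)},$$
and a Chebyshev argument in the style of Lemma~8.1 of \cite{ghosal:ghosh:vdv:00} produces an event $\mathcal E_n$ with $P_{\theta_0}^{(n)}(\mathcal E_n)\geq 1-\eps/3$ on which $N_n:=\int(p_\theta^{(n)}/p_{\theta_0}^{(n)})\,d\pi(\theta)\geq e^{-Dn\epsilon_n^2(k_n)}$ for some constant $D=D(c_2,c_3,c_4)$.

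For the numerator I would peel $B_n\cap\Theta_n(k)$ into annuli $S_{k,j}=\{\theta\in\Theta_n(k):2^{j-1}C_\eps\epsilon_n(k_n)\leq d(\theta,\theta_0)\leq 2^jC_\eps\epsilon_n(k_n)\}$. Since $\sqrt{k\log n/n}\leq\sqrt{2}\,M\,\epsilon_n(k_n)$ throughout $\mathcal K_n(M)$, taking $C_\eps$ above a fixed multiple of $J_0\vee J_1M$ makes both the tests and the entropy bound in \textbf{A2} applicable on each $S_{k,j}$ with $u=2^{j-1}C_\eps\epsilon_n(k_n)$; the resulting tests $\varphi_{n,k}$ have type~I error and $S_{k,j}$-type~II errors each dominated by $e^{-c_5 nu^2/2}$. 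Adding the sieve remainder $\pi_{|k}(\Theta_n(k)^c)\leq Ce^{-(c_2+c_3+c_4+2)n\epsilon_n^2(k_n)}$ from \textbf{A1} and summing geometrically in $j$ and polynomially in $k\in\mathcal K_n(M)$, for $C_\eps$ large enough one arrives at
$$E_{\theta_0}^{(n)}\Bigl[(1-\max_k\varphi_{n,k})\int_{B_n\cap\{k\in\mathcal K_n(M)\}}\frac{p_\theta^{(n)}}{p_{\theta_0}^{(n)}}\,d\pi\Bigr]\leq e^{-(D+1)n\epsilon_n^2(k_n)},\quad E_{\theta_0}^{(n)}\max_k\varphi_{n,k}\leq\eps/3.$$
Dividing by $N_n$ on $\mathcal E_n$ yields a contribution tending to $0$, and combining with $P_{\theta_0}^{(n)}(\mathcal E_n^c)\leq\eps/3$ and the Step~1 reduction closes the argument.

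The only delicate point is calibrating $C_\eps$ in the peeling step: one must invoke $k\leq 2M^2 k_n$ to keep the activation threshold $J_1\sqrt{k\log n/n}$ of order $\epsilon_n(k_n)$ rather than $\sqrt{\log n}\,\epsilon_n(k_n)$, which would otherwise inject a spurious $\sqrt{\log n}$ factor into the contraction rate. This is precisely why no $\sqrt{\log n}$ inflation appears here, whereas Remark~\ref{rk} explains why it is nonetheless needed in the credible-ball radius (the effective $k$ there is the posterior's, not $k_n$).
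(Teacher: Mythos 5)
Your argument is correct and follows essentially the same route as the paper's proof: reduction to $k\in\mathcal K_n(M)$ via Lemma~\ref{lem:post:Kn}, the evidence lower bound built from \textbf{H} and \textbf{A1} (the event $\Omega_{n,0}$), and the tests-plus-entropy bound from \textbf{A2} on annuli of radius a large multiple of $\epsilon_n(k_n)$, with Lemma~\ref{rem:Kn} controlling $k\leq 2M^2k_n$. The only differences (dyadic rather than unit-width annuli, Chebyshev rather than the higher-moment bound of Lemma~10 of \cite{ghosal:vdv:07} for the denominator event) are cosmetic.
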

 The proof of Lemma \ref{lem:rate} is presented in Section \ref{sec:pr:lemrate} in the supplement.

Finally we show that the radius of the credible set is bounded from above by a multiple of $\eps_n(k_n)$.

 \begin{corollary}\label{cor:radius}
Under the assumptions of Lemma \ref{lem:rate} and \eqref{freq} for all $\eps\in(0,1/2)$ there exists $K_{\eps}>0$ large enough such that
\begin{align*}
\inf_{\theta_0\in\bar\Theta}P_{\theta_0}^{(n)}\big(\diam(\widehat{C}(1,\alpha),d)\leq K_{\eps}\eps_n(k_n) \big)\geq 1-\eps.
\end{align*}
 \end{corollary}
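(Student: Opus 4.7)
The plan is to combine Lemma \ref{lem:rate} with assumption \textbf{A0} via the triangle inequality. Lemma \ref{lem:rate} controls the posterior mass outside a $d$-ball of radius of order $\epsilon_n(k_n)$ around $\theta_0$ (in expectation), while \eqref{freq} guarantees that the centering point $\hat{\theta}_n$ sits within $M_\epsilon \epsilon_n(k_n)$ of $\theta_0$ with high probability. Together they force the posterior to put at least $1-\alpha$ mass in some $d$-ball of radius $O(\epsilon_n(k_n))$ centered at $\hat{\theta}_n$, which bounds the credible radius $r_\a$ via the definition \eqref{def: HBcred}.

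Concretely, given $\eps\in(0,1/2)$, I would first apply Lemma \ref{lem:rate} with $\eps\a$ in place of $\eps$ to obtain a constant $C_{\eps\a}>0$ such that
\[
\sup_{\theta_0\in\bar\Theta} E_{\theta_0}^{(n)}\bigl(\pi(d(\theta,\theta_0)\geq C_{\eps\a}\eps_n(k_n)\mid \mathbf Y)\bigr)\leq \eps\a.
\]
Markov's inequality then yields that the event
\[
E_1 := \bigl\{ \pi(d(\theta,\theta_0)\leq C_{\eps\a}\eps_n(k_n)\mid \mathbf Y)\geq 1-\a\bigr\}
\]
has $P_{\theta_0}^{(n)}$-probability at least $1-\eps$ uniformly in $\theta_0\in\bar\Theta$. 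In parallel, assumption \textbf{A0} produces a constant $M_\eps>0$ such that the event $E_2:=\{d(\theta_0,\hat\theta_n)\leq M_\eps\eps_n(k_n)\}$ has probability at least $1-\eps$.

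On $E_1\cap E_2$, which by a union bound has probability at least $1-2\eps$, the triangle inequality gives the inclusion $\{\theta:d(\theta,\theta_0)\leq C_{\eps\a}\eps_n(k_n)\}\subseteq \{\theta:d(\theta,\hat\theta_n)\leq (C_{\eps\a}+M_\eps)\eps_n(k_n)\}$, so the posterior mass of the latter set is at least $1-\a$. From \eqref{def: HBcred} we therefore read off $r_\a\leq (C_{\eps\a}+M_\eps)\eps_n(k_n)$, and hence $\diam(\widehat C(1,\a),d)\leq 2r_\a\leq K_\eps\eps_n(k_n)$ with $K_\eps:=2(C_{\eps\a}+M_\eps)$. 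There is no real obstacle here; the argument is the routine way in which posterior concentration plus a good frequentist centering transfer into credible-ball radius control, and the only care needed is to trade $\eps$ against $\a$ when invoking Markov so that the final probabilistic budget remains $2\eps$.
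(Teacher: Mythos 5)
Your argument is correct and is exactly the route the paper intends: the paper gives no written proof, stating only that the corollary is "a straightforward consequence of assumption \eqref{freq} and Lemma \ref{lem:rate}", and your combination of Lemma \ref{lem:rate} (applied at level $\eps\a$ plus Markov), assumption \textbf{A0}, the triangle inequality, and the definition \eqref{def: HBcred} of $r_\a$ is the standard way to make that precise. The probabilistic bookkeeping ($1-2\eps$ via the union bound) and the choice $K_\eps=2(C_{\eps\a}+M_\eps)$ match the statement.
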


The lemma is a straightforward consequence of assumption \eqref{freq} and Lemma \ref{lem:rate}. Note that a direct consequence of Corollary \ref{cor:radius}, together with Assumption \textbf{A0} is that $\eps_n(k_n)$ is also an upper bound on the posterior concentration rate.



\subsection{Empirical Bayes approach} \label{sec:empiric}

An alternative approach to endow the hyper-parameter $k$ by a prior is to estimate it from the data directly and plug in this estimator into the posterior distribution. One of the most commonly used approaches is the maximum marginal likelihood empirical Bayes method, where one estimates the hyper-parameter with the maximizer of the marginal likelihood function
\begin{align}
\hat{k}_n=\arg\max_{k \leq \bar K_n} \int_{\Theta(k)} e^{\ell_n(\theta)}\pi_{|k}(\theta)d\theta, \label{def: MMLE}
\end{align}
where $\ell_n(\theta)$ denotes the log-likelihood function. This empirical Bayes technique is closely related to the hierarchical Bayes approach \cite{rousseau:szabo:2015:main}, however, in certain situations they can have substantially different behaviour \cite{petrone:rousseau:scricciolo:14,castillo:mismer:2018}.

In the empirical Bayes approach we construct the (inflated) credible set similarly to the hierarchical Bayes case, i.e. we consider a $d$-ball around the centering point $\hat\theta_n$ (typically the empirical Bayes posterior mean or mode)
\begin{align}
\hat{C}_{\hat k_n}(L,\a)= \{\theta:\, d(\theta,\hat\theta_n)\leq L_n r_\a(\hat{k}_n)\},\label{def: EBcred}
\end{align}
where $L_n>0$ is a blow up factor and the radius $r_\alpha(\hat{k}_n)$ is defined as
\begin{align}
r_\alpha(\hat{k}_n)=\arg\inf_{r>0}\{\pi_{|\hat k_n}(d(\theta,\hat\theta_n)\leq r_\a(\hat{k}_n) | \mathbf Y )\geq 1-\a\},
\end{align}
for a typically small $\a\in(0,1)$. We show that these sets have similar size as the hierarchical Bayes credible sets and good frequentist coverage under the general polished tail condition \eqref{cond:tail:L} for appropriately large blow up factor $L_n$ of order $\sqrt{\log n}$.

\begin{theorem}\label{th:EmpBayesCoverage}
Assume that conditions \textbf{A0}-\textbf{A4} and \eqref{eq: cond:bias} hold with $\bar K_n\leq n^H$ for some $H\geq 0$. Then for every $\eps,\a\in(0,1)$ there exists a large enough constant $L_{\eps,\alpha}$ such that

  \begin{equation*}
\liminf_n \inf_{\theta_0 \in \Theta_0}P_{\theta_0}^{(n)} \left( \theta_0 \in \widehat C_{\hat k_n}( L_{\eps,\a} \sqrt{\log n},\a)\right)  \geq 1 - \epsilon.
 \end{equation*} 
  Furthermore, there exists $K_{\eps}>0$ such that 
  \begin{align*}
\inf_{\theta_0\in\bar\Theta}P_{\theta_0}^{(n)}\big(\diam(\widehat{C}_{\hat k_n}(1,\alpha),d)\leq K_{\eps}\eps_n(k_n) \big)\geq 1-\eps.
\end{align*}
\end{theorem}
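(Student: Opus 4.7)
The plan is to run the empirical Bayes proof in parallel with the hierarchical Bayes proof of Theorem \ref{th:coverage}, substituting "$\hat k_n \in \mathcal K_n(M)$ with high probability'' for the statement of Lemma \ref{lem:post:Kn} that $\pi_k(k | \mathbf Y)$ concentrates on $\mathcal K_n(M)$. Once this substitution is made, the remaining analysis is conducted conditionally on the finite, $\theta_0$-adapted set $\mathcal K_n(M)$, whose cardinality is $O(k_n)$ by Lemma \ref{rem:Kn}, so a crude union bound over $k \in \mathcal K_n(M)$ is harmless.

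First, I would prove that $\inf_{\theta_0 \in \bar\Theta} P_{\theta_0}^{(n)}(\hat k_n \in \mathcal K_n(M)) \to 1$ for $M$ large enough. Writing $m_n(k)=\int_{\Theta(k)}e^{\ell_n(\theta)}\pi_{|k}(\theta)\,d\theta$, assumption \textbf{A1} combined with a standard Chebyshev control of $\ell_n(\theta)-\ell_n(\theta_0)$ on a KL-neighbourhood of $\theta_0$ gives, on a $P_{\theta_0}^{(n)}$-event of probability $\geq 1-\epsilon$, a lower bound $m_n(k_n) \gtrsim e^{\ell_n(\theta_0) - C n\epsilon_n^2(k_n)}$. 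For the upper bound, split $k\notin\mathcal K_n(M)$ into the regime $k > 2M^2 k_n$, handled through the testing sets of \textbf{A2} together with the prior remaining-mass bound of \textbf{A4}, and the regime of small $k$ where the bias $b(k)$ dominates and the polished-tail condition \eqref{cond:tail:L} (together with the extension \eqref{eq: cond:bias}) makes $nb(k)$ much larger than any prior-penalty term. In both regimes one obtains $m_n(k) \leq e^{\ell_n(\theta_0) - (c_3+c_4+1)n\epsilon_n^2(k_n)}$ uniformly, so the maximizer $\hat k_n$ must lie in $\mathcal K_n(M)$. This step is the main obstacle, since the MMLE is a maximum rather than a posterior average, so the controls on $m_n(k)$ must hold simultaneously in $k$ over $\{1,\ldots,\bar K_n\}$; this is where the assumption $\bar K_n \leq n^H$ enters, allowing a union bound over $k$ of polynomial size.

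For the diameter bound, one applies the standard posterior contraction argument of \cite{ghosal:ghosh:vdv:00,ghosal:vdv:07} to the conditional posterior $\pi_{|k}(\cdot|\mathbf Y)$, simultaneously for $k \in \mathcal K_n(M)$, combining \textbf{A1}, \textbf{A2}, \textbf{A3} to obtain
\[
\sup_{k\in \mathcal K_n(M)} \pi_{|k}(d(\theta,\theta_0)\geq C_\epsilon \epsilon_n(k_n)|\mathbf Y) \leq \epsilon
\]
with $P_{\theta_0}^{(n)}$-probability at least $1-\epsilon$. Specialising to $k=\hat k_n$ and invoking \textbf{A0} to locate $\hat\theta_n$ within $M_\epsilon\epsilon_n(k_n)$ of $\theta_0$ yields $r_\alpha(\hat k_n)\leq (C_\epsilon + M_\epsilon)\epsilon_n(k_n)$, establishing the second claim.

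It remains to lower bound $r_\alpha(\hat k_n)$ by a multiple of $\sqrt{k_n/n}$, which combined with $d(\theta_0,\hat\theta_n)\leq M_\epsilon\epsilon_n(k_n) \lesssim \sqrt{k_n\log n/n}$ would furnish the coverage claim for $L_{\epsilon,\alpha}\sqrt{\log n}$ with $L_{\epsilon,\alpha}$ large. For $r=\delta\sqrt{\hat k_n/n}$, write the posterior mass as a ratio and bound the numerator using the first part of \textbf{A5} to control $\ell_n(\theta)-\ell_n(\theta_{[\hat k_n]}^o)\leq B_\epsilon \hat k_n$ uniformly in a neighbourhood of $\theta_0$, and the second part of \textbf{A5} to bound the prior mass of $B_k(\hat\theta_n,r,d)\cap\Theta_n(\hat k_n)$ by $c_{10} e^{c_9 \hat k_n \log\delta}\pi_{|\hat k_n}(B_k(\theta_{[\hat k_n]}^o,\sqrt{\hat k_n/n},d))$. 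The denominator is bounded below by an evidence inequality derived from \textbf{A1} and \textbf{A3} which produces exactly a $\pi_{|\hat k_n}(B_k(\theta_{[\hat k_n]}^o,\sqrt{\hat k_n/n},d))$ factor multiplied by $e^{-C\hat k_n}$ on a high-probability event. The prior-mass factors cancel, leaving
\[
\pi_{|\hat k_n}(B_k(\hat\theta_n,r,d)|\mathbf Y) \lesssim \exp\bigl((B_\epsilon+C)\hat k_n + c_9 \hat k_n \log \delta\bigr),
\]
which, upon choosing $\delta$ small enough depending on $(B_\epsilon,C,c_9)$, falls below $1-\alpha$ as $\hat k_n \to \infty$ (and remains so for $\hat k_n$ bounded). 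Hence $r_\alpha(\hat k_n)\geq \delta\sqrt{\hat k_n/n}\gtrsim \sqrt{k_n/n}$, finishing the proof.
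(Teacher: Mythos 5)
Your proposal is correct and follows essentially the same three-step structure as the paper: (i) show $\hat k_n\in\mathcal K_n(M)$ with high probability via a lower bound on $m_n(k_n)$ (Lemma 10 of \cite{ghosal:vdv:07} / Chebyshev under \textbf{A1}, \textbf{A3}) and a uniform upper bound on $m_n(k)$ for $k\notin\mathcal K_n(M)$, $k\leq\bar K_n=n^H$ (testing from \textbf{A2} plus \textbf{A4} with the polynomial union bound the $n^H$ cap permits), (ii) re-run the hierarchical-Bayes coverage argument restricted to $k\in\mathcal K_n(M)$ — i.e.\ the bound $\pi_{n,k}\leq\eps$ from \eqref{eq: UB_hyper_post} via \textbf{A5} and the evidence lower bound — to obtain the $\sqrt{\log n}$-inflated coverage, and (iii) re-run the Lemma \ref{lem:rate} contraction argument uniformly over $\mathcal K_n(M)$ for the diameter. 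One small inaccuracy worth noting: in the regime $b(k)>k\log n/n$ the paper does not invoke the polished-tail condition but simply that $\Theta(k)$ is disjoint from $B(\theta_0,M\epsilon_n/\sqrt 2,d)$ (equation \eqref{empty}); the polished-tail condition enters only through Lemma \ref{rem:Kn} to give $\hat k_n\gtrsim k_n$, which you do use correctly at the end.
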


The proof is deferred to Section \ref{sec:proof:covEB} in the supplement.

 \section{Application to various models} \label{sec:applis}
 
In this section we apply our abstract results (Theorems \ref{th:coverage} and \ref{th:EmpBayesCoverage} and Corollary \ref{cor:radius}) in four examples: nonparametric regression, density estimation with histogram priors and with exponential family priors, and nonparametric classification. To prove the contraction rate and coverage results in all of the examples we have shown that the considered semi-metrics are locally equivalent to the $\ell_2$ norm on the parameter space $\bar{\Theta}$. This result is of interest on its own right. Besides it also results in (nearly) optimal posterior contraction rates and coverage of the credible sets in terms of  the $\ell_2$ and other related norms.  Condition \textbf{A4} (ii) requires uniform and sharp control on the likelihood ratio. In the following examples we give a general strategy to prove such kind of statements, which can come handy in other nonparametric problems as well.

\subsection{Application to nonparametric regression} \label{sec:reg}
In this section we consider the fixed design regression model and investigate the behaviour of Bayesian credible sets based on sieve priors. Assume that we observe the sequence $\mathbf{Y}=(Y_1,Y_2,...,Y_n)$ satisfying
\begin{align}
Y_i=f_0(x_i)+\sigma Z_i,\quad x_i\in[0,1],\quad i=1,2,...,n,\label{def: reg}
\end{align}
where $Z_i$ are iid standard normal random variables, $\sigma=1$ for simplicity and $x_1,x_2,...,x_n$ are fixed (or random) design points. 

Next we consider the basis $\phi_1(x),\phi_2(x),...$ in $L_2[0,1]$. 
 Note that every $f\in L_2[0,1]$ can be written in the form $f(x)=f_{\theta}(x)=\sum_{i=1}^\infty \theta_i \phi_i(x)$ (with the convention $\theta=(\theta_1,...,\theta_k,0,0....)$ for $\theta\in\Theta(k)=\mathbb{R}^{k}$) and we assume that the true function $f_{\theta_0}$ belongs to a Sobolev-type smoothness class $S^{\beta}(L_0)$, defined as
\begin{align}
S^{\beta}(L_0)=\{f_{\theta}:\, \sum_{i=1}^{\infty}\theta_i^2 i^{2\beta}\leq L_0\},\quad\text{for some $\beta,L_0>0$,}\label{def: sobolev}
\end{align}
with typically unknown regularity parameter $\beta>0$. Note that depending on the basis functions $\phi_j$ this may or may not  refer to the classical Sobolev balls; note also that the Fourier basis satisfies the assumptions below. The minimax estimation rate, which typically coincides with the minimax size of confidence sets, see for instance \cite{robins:2006}, over $S^{\beta}(L_0)$ with respect to the $\ell_2$-norm is $n^{-\beta/(1+2\beta)}$.

Next, for any $k\leq n$ we introduce the notation $\Phi_k=(\phi_1,\phi_2,...,\phi_k)\in \mathbb{R}^{n\times k}$.  Let 
$d_n(\theta,\theta')^2=\frac{1}{n}\sum_{i=1}^n\big(f_{\theta}(x_i)-f_{\theta'}(x_i)\big)^2$ be the empirical $L_2$-norm between the functions $f_{\theta},f_{\theta'}\in L_2$. Let us introduce the notation  $f_{\theta,n} = (f_\theta(x_1), ..., f_\theta(x_n))$ and denote by $\theta_{[k]}^o$ the empirical $L_2$-norm projection of $f_{0,n} = (f_{\theta_0}(x_1), ..., f_{\theta_0}(x_n))^T$ to the space $\{\Phi_k \theta:\, \theta\in \mathbb R^k \}$ or in other words the $d_n$-projection of $\theta_0$ on $\mathbb{R}^k$.  Then defining $b(k)$ in terms of the semi-metric $d_n(.,.)$ leads to  $b(k)=d_n(
\theta_0,\theta_{[k]}^o)^2$  the approximation error of the true function with the $k$ dimensional projection.
Assume furthermore that there exists a constant $C_0\geq 1$ and a sequence $K_n$ going to infinity  such that
\begin{align}\label{assump: reg_design_mtx}
C_0^{-1}I_{K_n}\leq \frac{\Phi_{K_n}^T\Phi_{K_n} }{ n } \leq C_0 I_{K_n}.
\end{align}

\begin{remark}
The above assumptions on the choice of the basis functions $\phi_j(x)\in L_2[0,1]$ and the design points $x_1,x_2,...,x_n$ are very mild and standard. There are many suitable choice of basis satisfying these properties.  Orthonormal basis in $\mathbb{R}^n$, such as the discrete wavelet basis relative to the design points satisfy \eqref{assump: reg_design_mtx} with $K_n= n$, some orthonormal basis in $L_2$ will satisfy \eqref{assump: reg_design_mtx} for some finite value $K_n$. In the case of the Fourier basis for instance, \eqref{assump: reg_design_mtx} is valid as soon as  $K_n = o(n)$. 

\end{remark}

\begin{remark}
Let us consider a probability measure $\nu$ on $[0,1]$ and take an orthonormal $L_2(\nu)$ basis $\phi_j$. Then in view of Rudelson's inequality, \cite{rudelson:1999} 
\begin{equation} \label{app:design}
E_\nu\| \frac{\Phi_k^T\Phi_k }{ n } -I_k \|_2 \leq  M \sqrt{ \frac{k \log n }{n} } 
\end{equation}
for all $k \leq k_0 n/\log n $ and some $k_0$ small enough. Hence following from Lemma \ref{lem: help_loglin_2} in the supplementary material \cite{rousseau:szabo:16:supp}, if $K_n\log K_n =o( n ) $ assertion \eqref{assump: reg_design_mtx} is verified  with $\nu$-probability going to 1. 
\end{remark}

Due to the condition \eqref{assump: reg_design_mtx} we have to slightly modify the polished tail condition by assuming that the approximation error using the largest model $\Theta(K_n)$ is not too large, i.e we take 
$$\Theta_{0,n}   = \Theta_{0,n}(R_0,k_0,\tau) \cap \{ \theta_0: \,b( K_n ) \leq \delta K_n (\log n)/n \},$$
 for some $\delta < 1\wedge C_0$ and consider $\theta_0 \in \Theta_{0,n}$.  

\begin{remark}\label{rem:Kn2}
To understand better the meaning of the restriction $\theta_0 \in \Theta_{0,n}$, assume that $\sum_{j=1}^\infty|\theta_{0,j}| < +\infty$ and $\max_{j}\|\phi_j\|_{\infty}<\infty$ so that $\|f_{\theta_0}\|_{\infty}<\infty$. If \eqref{assump: reg_design_mtx} is true for all $1\leq k \leq Cn$, $C>0$,  then $\|\Delta_k \|_\infty = o(1)$ as $k$ goes to infinity, with
 $\Delta_k = f_{\theta_0} - \sum_{j=1}^k \theta_{0,j}\phi_j$. Since $b(k) \leq \|\Delta_k\|_\infty^2$ then there exists $K_n$, with $Cn\geq K_n\geq 1$, such that $b(K_n) \leq \delta K_n (\log n)/n$ for all $n\geq 2$ and $\delta >0$. Hence for all $L>0$,  $\{ \theta_0: \|\theta_0\|_1 \leq L \} \cap \Theta_0 \subset \Theta_{0,n}$, when $n$ is large enough, following from the inequality $\|f_{\theta}\|_{\infty}\leq  \|\theta\|_1 \max_{j}\|\phi_j\|_{\infty}$. However, if \eqref{assump: reg_design_mtx} is only true for $K_n=o(n)$, then $\Theta_{0,n}$ will typically be more constraint. 
For instance for $\theta_0\in S^{\beta}(L_0)$, $\beta >1/2$, we can bound
 $b(K_n) \leq \|\Delta_{K_n}\|_\infty^2 \lesssim K_n^{-2(\beta-1/2)}$ (using Cauchy-Schwarz inequality) so that $b(K_n) \leq \delta K_n (\log n)/n$ if $K_n \gtrsim (n/\log n)^{1/(2\beta)}$. In case $K_n  \asymp n/\log n$, $\beta >1/2$ is enough. The upper bound $K_n^{-2(\beta-1/2)}$ is independent of the design and the chosen basis and can be improved in particular cases.

For instance in the random design case with distribution $\nu$ and bounded orthonormal basis $\max_j \|\phi_j\|_\infty < +\infty $ and writing $\theta_{0,[k]}=(\theta_{0,1},...,\theta_{0,k})\in\mathbb{R}^k$ one has
\begin{equation*}
\begin{split}
\nu \big( d_n^2(\theta_0,\theta_{0,[K_n]}) > C \|\Delta_{K_n}\|_2^2 \big) &=  \nu \Big( \sum_{i=1}^n \big(\sum_{j=K_n+1}^\infty \theta_{0,j}\phi_j(x_i) \big)^2 > nC\|\Delta_{K_n}\|_2^2 \Big)\\
&\leq \frac{ E_\nu\Big( \big(\sum_{j=K_n+1}^\infty \theta_{0,j}\phi_j(X)\big)^2 \Big)}{ C \|\Delta_{K_n}\|_2^2} \leq  \frac{ 1}{ C }.
\end{split}
\end{equation*}
Therefore  $b(K_n)\leq d_n^2(\theta_0,\theta_{0,[K_n]})\leq C\|\Delta_{K_n}\|_2^2 \lesssim K_n^{-2 \beta }$ with large probability.
\end{remark}

\begin{remark}\label{rem: polished:tail:regression}
In the fixed design regression model with $K_n\geq n^{\frac{1}{2(\beta_0-1/2)}}$ (where $\beta_0>0$ is the smallest regularity level we are adapting to) the set $\Theta_{0,n}$ contains the set in $\{ \theta_0: \,b( K_n ) \leq \delta K_n (\log n)/n \}$ satisfying the $L_2$ polished tail condition of \cite{szabo:vdv:vzanten:13}, i.e. if $$\|\theta_{0,[R_0k]} - \theta_0\|_2^2\leq \tau_1\|\theta_{0,[k]} - \theta_0\|_2^2, \quad \tau_1 < 1/(5C_0^2)$$for all $k \geq k_0$, then $\theta_0 \in \Theta_{0,n}$. In the random design regression model (with arbitrary sequence $K_n$ tending to infinity) the above inclusion holds with $\nu$-probability arbitrary close to one. Therefore the discussion in \cite{szabo:vdv:vzanten:13} on the $L_2$ polished tail condition, in terms of the force of the restriction induced by this condition applies here, namely that the condition is non restrictive from a statistical complexity, topological and Bayesian perspective.
\end{remark}
The proof of the above remark is given in Section \ref{sec: polished:tail:regression} in the Supplementary material \cite{rousseau:szabo:16:supp}.


Then we define the prior distribution on the regression function $f$ by endowing the sequence of coefficients $\theta$  with the standard sieve prior, i.e.
\begin{align*}
\theta=(\theta_1,...,\theta_k) | k\sim \prod_{i=1}^k g(\theta_i),\\
 k\sim Geom(p)\, \text{or}\, Pois(\lambda),
\end{align*}
where $p\in(0,1)$ or $\lambda>0$ and $g(.)$ satisfies the standard assumption
\begin{align}
G_1 e^{-G_2 |x|^{q}}\leq g(x)\leq G_3 e^{-G_4 |x|^{q}}, \label{eq: sieve_prior}
\end{align}
for some positive constants $G_1,G_2,G_3,G_4$ and $q$. Alternatively we can also estimate $k$ by the MMLE \eqref{def: MMLE} and plug in the estimator $\hat{k}_n$ into the posterior. These type of priors were considered for instance in  \cite{arbel} and \cite{rousseau:szabo:2015:main}, where it was shown that the corresponding hierarchical and empirical Bayes posterior distributions achieve (up to a $\log n$ factor) adaptive contraction rate around the true function $f_0$. The frequentist behaviour of the Bayesian credible sets in the context of the regression model was investigated only in a few papers \cite{sniekers:2015,serra:2014,yoo:vaart:2017} for specific conjugate priors allowing direct computations, which can not be applied in the present setting due to the lack of explicit expression for the posterior.  Here we consider both the inflated hierarchical Bayes credible set
\begin{align*} 
 \hat{C}(L\sqrt{\log n}, \a)=\{\theta:\, d_n(\theta,\hat{\theta}_n)\leq L\sqrt{\log n}r_\a\},
\end{align*} 
with $\pi( \theta:\, d_n(\theta,\hat{\theta}_n)\leq r_\a|\mathbf{Y})\geq 1-\a$ and $\hat{\theta}_n$ satisfying assumption \textbf{A0} and the inflated MMLE empirical Bayes credible set defined along the same lines. By applying Theorems \ref{th:coverage} and \ref{th:EmpBayesCoverage} together with Corollary \ref{cor:radius} we can verify that both credible sets have good frequentist coverage and (almost) rate adaptive size under the general polished tail assumption.

\begin{proposition}\label{prop: reg}
Consider the fixed design regression model \eqref{def: reg} with $f_0\in S^{\beta}(L_0)$ for some $\beta\geq\beta_0>1/2$ and assume that condition \eqref{assump: reg_design_mtx} is satisfied with $K_n>n^{\frac{\beta_0}{(1+2\beta_0)(\beta_0-1/2)}}$.  Denote both the inflated hierarchical Bayes and empirical Bayes credible sets, centered around any estimator $\hat\theta_n$ satisfying \eqref{freq} by $\hat{C}_n(L\sqrt{\log n},\a)$. Then $\hat{C}_n(L\sqrt{\log n},\a)$ has (up to a $\log n$ factor) rate adaptive size and frequentist coverage tending to one under the general polished tail assumption  $\eqref{cond:tail:L}$, i.e. for every $\eps>0$ there exist a large enough $L,C>0$ such that
 \begin{align*}
 \liminf_n\inf_{\theta_0\in\Theta_{0,n}\cap S^{\beta_0}(L_0) } P_{\theta_0}^{(n)} \big( \theta_0\in \hat{C}_n(L\sqrt{\log n}, \a ) \big)\geq 1-\eps,\\
 \liminf_n\inf_{\beta\geq \beta_0}\inf_{\theta_0\in \mathcal{S}^{\beta}(L_0)}P_{\theta_0}^{(n)} \Big( \diam\big(\hat{C}_n(1, \a),d_n\big)\leq C \big(\frac{n}{\log n}\big)^{-\frac{\beta}{1+2\beta}}\Big)\geq 1-\eps.
 \end{align*}
\end{proposition}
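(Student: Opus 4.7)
My plan is to reduce Proposition \ref{prop: reg} to Theorems \ref{th:coverage} and \ref{th:EmpBayesCoverage} together with Corollary \ref{cor:radius}, by verifying the abstract assumptions \textbf{H}, \textbf{A0}--\textbf{A5} and \eqref{eq: cond:bias} for the Gaussian regression model equipped with the sieve prior \eqref{eq: sieve_prior}. For $\theta_0 \in S^\beta(L_0)$ with $\beta \geq \beta_0 > 1/2$, the bias function satisfies $b(k) \asymp k^{-2\beta}$ for all $k \leq K_n$, using \eqref{assump: reg_design_mtx} and the discussion in Remark \ref{rem:Kn2}; this yields $k_n \asymp (n/\log n)^{1/(1+2\beta)}$ and $\epsilon_n^2(k_n) \asymp (n/\log n)^{-2\beta/(1+2\beta)}$. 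Monotonicity of $k\mapsto b(k)$ trivially provides \eqref{eq: cond:bias} with $A_0=1$, and under the assumption on $K_n$ the choice $\bar K_n = A k_n \log n$ (hierarchical case) or $\bar K_n = n^H$ with $H<1/2$ (empirical case) stays well below $K_n$, so $d_n$ remains equivalent to the Euclidean norm on $\Theta(k) = \mathbb{R}^k$ for every $k$ that enters the analysis.

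Condition \textbf{H} and the assumption \textbf{A0} hold by construction of the prior and by hypothesis on $\hat\theta_n$. Assumption \textbf{A1} follows from the lower bound in \eqref{eq: sieve_prior} which gives $\pi_{|k_n}(\|\theta-\theta_{[k_n]}^o\|_2 \leq c\sqrt{k_n/n}) \gtrsim e^{-c' k_n \log n}$, combined with the sieve $\Theta_n(k) = \{\theta \in \mathbb R^k : \|\theta\|_\infty \leq M_n\}$ for a slowly growing $M_n$, whose complement carries prior mass at most $e^{-c'' M_n^q}$. Assumption \textbf{A2} is the standard likelihood-ratio test construction in the fixed-design Gaussian model together with the Euclidean entropy bound $\log N(u, \mathbb R^k \cap\{\|\theta\|\leq 2u\},\|\cdot\|)\lesssim k$. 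For \textbf{A3}, the key observation is the projection identity: since $\theta_{[k]}^o$ is the $d_n$-projection of $\theta_0$ onto $\mathbb R^k$, for $\theta \in \mathbb R^k$ one has $d_n^2(\theta_0,\theta) = d_n^2(\theta_0,\theta_{[k]}^o) + d_n^2(\theta_{[k]}^o,\theta)$, so $E_{\theta_0}^{(n)}\log(p_{\theta_{[k]}^o}/p_\theta) = (n/2)d_n^2(\theta_{[k]}^o,\theta) \leq k/2$ whenever $d_n(\theta_{[k]}^o,\theta)\leq \sqrt{k/n}$; the centered $r$-th moment is a Gaussian moment with variance $n d_n^2(\theta_{[k]}^o,\theta) \leq k$, yielding $O(k^{r/2})$.

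The real work lies in \textbf{A4} and \textbf{A5}. For \textbf{A4}, the upper bound in \eqref{eq: sieve_prior} gives $\pi_{|k}(B_k(\theta_0, J_1\sqrt{k\log n/n},d_n)) \lesssim (C\sqrt{\log n/n})^{k}$, and for $k \geq M_0 k_n$ with $M_0$ large enough this is smaller than the required threshold $e^{-(c_2+c_3+c_4+\gamma)n\epsilon_n^2(k_n)} \asymp e^{-Ck_n\log n}$. For the first part of \textbf{A5}, I decompose
\[
\ell_n(\theta)-\ell_n(\theta_{[k]}^o) = -\frac{n}{2}\big(d_n^2(\theta_0,\theta) - d_n^2(\theta_0,\theta_{[k]}^o)\big) + \sum_{i=1}^n Z_i\big(f_\theta - f_{\theta_{[k]}^o}\big)(x_i),
\]
so that the deterministic part is non-positive and the stochastic part is a linear Gaussian functional. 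Its supremum over $B_k(\theta_0,(M_\epsilon+1)\epsilon_n(k_n),d_n)\cap\Theta_n(k)$, which under \eqref{assump: reg_design_mtx} is contained in a Euclidean ball of radius $O(\epsilon_n(k_n))$, is controlled by standard Gaussian chaining to be $O(\sqrt{k\log n}) \leq B_\epsilon k$ on an event of probability at least $1-O(e^{-ck})$, uniformly summable over $k\in\mathcal K_n(M)$ since $|\mathcal K_n(M)|$ grows only polynomially by Lemma \ref{rem:Kn}. For the prior-mass ratio in the second part of \textbf{A5}, both $\theta$ and $\theta_{[k]}^o$ lie in a Euclidean ball of fixed radius, so the two-sided bound \eqref{eq: sieve_prior} on $g$ sandwiches the ratio between positive constants times $\delta_{n,k}^k = e^{k\log\delta_{n,k}}$, as required.

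With \textbf{H}, \textbf{A0}--\textbf{A5} and \eqref{eq: cond:bias} in hand, Theorems \ref{th:coverage} and \ref{th:EmpBayesCoverage} deliver the coverage bound on $\Theta_{0,n}$, hence on $\Theta_{0,n}\cap S^{\beta_0}(L_0)$, while Corollary \ref{cor:radius} gives $\diam(\hat C_n(1,\alpha),d_n) \leq K_\epsilon \epsilon_n(k_n) \lesssim (n/\log n)^{-\beta/(1+2\beta)}$ uniformly over $S^\beta(L_0)$, yielding the two displays. The principal obstacle is the chaining bound in \textbf{A5}: one must obtain a supremum of order $k$ rather than the naive $n\epsilon_n^2(k_n)$, since any additional $\log n$ factor at this step would propagate into an undesirable extra inflation of the credible radius.
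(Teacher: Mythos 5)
Your overall strategy coincides with the paper's: verify \textbf{H}, \textbf{A0}--\textbf{A5} and \eqref{eq: cond:bias}, with the same Pythagoras/projection identity for \textbf{A3}, the same volume-times-bounded-density bounds for \textbf{A4} and the second part of \textbf{A5}, and the same bias computation $b(k)^{1/2}\lesssim k^{-\beta}+K_n^{-(\beta-1/2)}$ to get $k_n\lesssim (n/\log n)^{1/(1+2\beta)}$ and conclude via Corollary \ref{cor:radius}. However, your treatment of the first part of \textbf{A5} — which you yourself single out as the principal obstacle — contains a genuine error. After writing $\ell_n(\theta)-\ell_n(\theta_{[k]}^o)=-\tfrac n2 d_n^2(\theta_{[k]}^o,\theta)+\mathbf Z^T\Phi_k(\theta-\theta_{[k]}^o)$ you discard the negative quadratic term and bound the supremum of the linear Gaussian term over the ball of $d_n$-radius $O(\epsilon_n(k_n))$. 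For a linear functional this supremum is exactly the radius times $\|\Phi_k^T\mathbf Z\|_2$, and since $E\|\Phi_k^T\mathbf Z\|_2^2=\mathrm{tr}(\Phi_k^T\Phi_k)\asymp nk$ and the radius is $\asymp\sqrt{k_n\log n/n}$, the supremum is of order $\sqrt{k\,k_n\log n}\asymp k\sqrt{\log n}$ on $\mathcal K_n(M)$ (where $k\asymp k_n$ by Lemma \ref{rem:Kn}). No chaining argument can improve this, because the bound is attained; in particular your claimed order $O(\sqrt{k\log n})$ is off by a factor $\sqrt{k_n}$, and the true order $k\sqrt{\log n}$ violates the required bound $B_\epsilon k$, which is precisely the extra $\sqrt{\log n}$ you correctly warned must not appear at this step.

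The repair is to keep the quadratic term rather than only using its sign: bounding the stochastic term by $\|\Phi_k^T\mathbf Z\|_2\,\|\theta-\theta_{[k]}^o\|_2\lesssim \|\Phi_k^T\mathbf Z\|_2\, d_n(\theta_{[k]}^o,\theta)$ and maximizing $-\tfrac n2 x^2 + C\|\Phi_k^T\mathbf Z\|_2\, x$ over $x\geq 0$ gives the uniform bound $C^2\|\Phi_k^T\mathbf Z\|_2^2/(2n)\asymp k_n\asymp k$ with high probability, which is how the paper verifies \textbf{A5}; the $\epsilon_n(k_n)$-radius of the ball then plays no role. A lesser issue: your sieve $\Theta_n(k)=\{\|\theta\|_\infty\leq M_n\}$ with ``slowly growing'' $M_n$ does not meet the remaining-mass requirement $\pi_{|k}(\Theta_n(k)^c)\leq Ce^{-(c_2+c_3+c_4+2)n\epsilon_n^2(k_n)}$, which forces $M_n\gtrsim (n\epsilon_n^2(k_n))^{1/q}$ (the paper takes an $\ell_2$-ball of radius $C_1\sqrt k\,(n\epsilon_n^2)^{1/q}$); and $b(k)\asymp k^{-2\beta}$ should only be claimed as an upper bound, which is all that is needed.
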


The proof of the proposition is given in Section \ref{sec: proof_reg} of  the supplement \cite{rousseau:szabo:16:supp}.

\begin{remark}
Assumption \eqref{freq} on the estimator is very mild, for instance a typical draw from the posterior distribution satisfies it, see the comment above Lemma \ref{lem:rate}. Furthermore, standard estimators like the posterior mean also satisfies this assumption, see for instance \cite{arbel}. We also note that similar results hold for the random design regression as well.
\end{remark}

\begin{remark}
Recall that by assumption \eqref{assump: reg_design_mtx} the empirical $L_2$ semi-metric $d_n$  and the $\ell_2$-norm are equivalent over $\Theta(k),\,k\leq K_n$. Furthermore note that the inequality $\|\theta_0-\theta_{0,[K_n]}\|_2^2\lesssim K_n^{-2\beta}\leq 1/n$ holds for $K_n>n^{\frac{\beta_0}{(1+2\beta_0)(\beta_0-1/2)}}$. Hence we get that the same contraction rate and coverage statements as in Proposition \ref{prop: reg} hold for the metric $\ell_2$ as well. 
\end{remark}

The $\sqrt{\log n}$ blow up factor in the credible set is rather inconvenient and makes the procedure less appealing. The question naturally arises whether this blow up factor is just an artefact of the proof and can be removed or whether it is necessary to reach the desired frequentist coverage. We show below that without inflating the credible sets (centered at the posterior mean) with a multiple of $\sqrt{\log n}$ one would get coverage tending to zero for a large class of parameters satisfying the polished tail condition, justifying the presence of the inflating factor.

In view of \cite{szabo:vdv:vzanten:13} let us consider the class of self-similar functions 
\begin{align*}
\mathcal{H}_s^{\beta}(L)=\{f_{\theta}:\, L^{-1}i^{-\beta -1/2} \leq |\theta_i|\leq L i^{-\beta -1/2},\,i=1,2,... \},
\end{align*}
 where it was also shown that the present set is not substantially smaller than the entire hyper-rectangle (the set without the lower bound assumption on $|\theta_i|$) from a topological and statistical complexity point of view. Note also that $\mathcal{H}_s^{\beta}(L)\subset S^{\beta+\eps}(C)$, for arbitrary $\eps>0$ and some sufficiently large constant $C>0$.

\begin{proposition}\label{prop: UB:radius}
Consider the fixed design regression model  \eqref{def: reg} with $f_0\in \mathcal{H}_s^{\beta}(L)$ for some $\beta\geq\beta_0>1/2$ and orthogonal basis $\Phi_{K_n}^T \Phi_{K_n}=nI_{K_n}$ (where $K_n>n^{\frac{\beta_0}{(1+2\beta_0)(\beta_0-1/2)}}$). Furthermore take the prior $g(\theta)$ to be either the normal $N(\mu,\sigma)$ or Laplace $Lap(\mu,b)$ distribution. Then the empirical Bayes credible set centered around the posterior mean $\hat\theta_{\hat{k}_n}$ and inflated with a factor $m_n\log^{1/2} n$, for arbitrary $m_n=o(1)$, has frequentist coverage tending to zero, i.e. for every $\alpha>0$
\begin{align*} 
\limsup_n\sup_{f_0\in \mathcal{H}_s^{\beta}(L)}P_{\theta_0}^{(n)} \Big( \theta_0\in 
\{\theta:\, d_n(\theta,\hat\theta_{k_n})\leq m_n\sqrt{\log n}r_{\alpha}(\hat{k}_n) \} \Big) = 0
 \end{align*}
\end{proposition}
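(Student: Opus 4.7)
The strategy is to show that for self-similar $f_0\in\mathcal{H}_s^\beta(L)$ the maximum marginal likelihood estimator concentrates on values $\hat k_n\asymp k_n:=(n/\log n)^{1/(1+2\beta)}$, that the conditional empirical Bayes credible radius at $\hat k_n$ is only of order $\sqrt{\hat k_n/n}$, while the distance of $\theta_0$ from the posterior mean $\hat\theta_{\hat k_n}$ is of the strictly larger order $\sqrt{\log n}\cdot\sqrt{\hat k_n/n}$ because under self-similarity the bias dominates the stochastic error at the selected $\hat k_n$. Consequently any inflation factor $o(\sqrt{\log n})$ is insufficient for coverage.

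First, under self-similarity $b(k)=\sum_{j>k}\theta_{0,j}^2\asymp k^{-2\beta}$, which by \eqref{kn-epsk} forces $k_n\asymp(n/\log n)^{1/(1+2\beta)}$. Using the Laplace-type analysis of the marginal likelihood developed in \cite{rousseau:szabo:2015:main} for the prior class \eqref{eq: sieve_prior} with geometric/Poisson $\pi_k$, one shows that with $P_{\theta_0}^{(n)}$-probability tending to $1$, $\hat k_n\in[c_1 k_n,c_2 k_n]$ for some $0<c_1<c_2<\infty$. The upper bound comes from the complexity penalty $\sim k\log n$ present in the marginal log-likelihood; the lower bound is where self-similarity is used, since for $k\ll k_n$ the bias contribution $nb(k)\asymp nk^{-2\beta}$ dominates and so the marginal likelihood at such $k$ falls strictly below its value at $k_n$.

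Second, conditionally on $k=\hat k_n$ and on the event that \eqref{assump: reg_design_mtx} holds with $\Phi_k^T\Phi_k=nI_k$, the log-likelihood in $\theta\in\mathbb{R}^k$ equals $-\tfrac{n}{2}\|\theta-\hat\theta_k^{LS}\|_2^2$ up to a $\theta$-free term. For Gaussian $g$ the conditional posterior is then explicitly Gaussian with covariance of order $n^{-1}I_{\hat k_n}$; for Laplace $g$ a Bernstein--von Mises argument inside the finite-dimensional $\Theta(\hat k_n)=\mathbb{R}^{\hat k_n}$ (the prior is log-concave and $\hat k_n\log n/n\to 0$ with high probability) yields the same scaling. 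This gives $r_\a(\hat k_n)\le C_\a\sqrt{\hat k_n/n}$ on an event of probability at least $1-\eps$. On the other hand, since $\hat\theta_{\hat k_n}$ is supported on the first $\hat k_n$ coordinates,
\begin{equation*}
d_n(\theta_0,\hat\theta_{\hat k_n})^2\;\ge\;\sum_{j>\hat k_n}\theta_{0,j}^2\;\ge\;L_0^{-2}\sum_{j>\hat k_n}j^{-2\beta-1}\;\gtrsim\;\hat k_n^{-2\beta}.
\end{equation*}

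Combining these, on the intersection of the above events, $d_n(\theta_0,\hat\theta_{\hat k_n})/r_\a(\hat k_n)\gtrsim \sqrt{n}\,\hat k_n^{-\beta-1/2}\asymp\sqrt{\log n}$, because $\hat k_n\asymp(n/\log n)^{1/(1+2\beta)}$. Hence inflating by $m_n\sqrt{\log n}$ with $m_n=o(1)$ produces a ball of radius $o\bigl(\sqrt{\log n}\,r_\a(\hat k_n)\bigr)=o\bigl(d_n(\theta_0,\hat\theta_{\hat k_n})\bigr)$, so $\theta_0$ lies outside $\hat C_n$ with probability tending to $1$. The main obstacle is the uniform (over the relevant window of $k$) control of the marginal likelihood and of the conditional posterior shape for the \emph{non-conjugate} Laplace prior: the Gaussian case is essentially conjugate and reduces to the sequence-space computation of \cite{szabo:vdv:vzanten:13}, whereas the Laplace case requires a careful Laplace-method expansion to establish both the concentration of $\hat k_n$ and the bound $r_\a(\hat k_n)\lesssim\sqrt{\hat k_n/n}$ with the correct constants.
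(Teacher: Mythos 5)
Your proposal is correct and follows essentially the same route as the paper's proof: localize $\hat k_n\lesssim k_n\asymp (n/\log n)^{1/(1+2\beta)}$ (the paper simply reuses \eqref{eq: MMLE} together with Lemma \ref{rem:Kn} rather than redoing a Laplace-type expansion of the marginal likelihood), bound the empirical Bayes radius by $C\sqrt{\hat k_n/n}$, and lower bound $d_n(\theta_0,\hat\theta_{\hat k_n})$ by the bias $b(\hat k_n)^{1/2}\geq b(Ck_n)^{1/2}\gtrsim k_n^{-\beta}\asymp\sqrt{k_n\log n/n}$ via self-similarity, so any $o(\sqrt{\log n})$ inflation fails. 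The only notable difference is in the radius step: instead of a Bernstein--von Mises argument for the Laplace prior, the paper exploits that under the orthogonal design the conditional posterior factorizes over coordinates, bounds each one-dimensional posterior variance by $C/n$ (for both the normal and Laplace $g$), and concludes by Chebyshev, which is lighter than your proposed route but yields the same $r_\a(\hat k_n)\lesssim\sqrt{\hat k_n/n}$ bound.
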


The proof of the proposition is given in Section \ref{sec: UB:radius} of the Supplementary material \cite{rousseau:szabo:16:supp}.

It is common or floklore knowledge that empirical Bayes procedures underestimate uncertainty compared to hierarchical Bayes procedures. However we prove below that under slightly more restrictive conditions on $\theta_0$ the same blow up factor is required for the hierarchical Bayes credible ball centered at the posterior mean. 
 More precisely let $\ell : \mathbb N \rightarrow \mathbb R_+$ be  a slowly varying function going to 0 at infinity and set
 $$  \mathcal{H}_{ss}^{\beta}(L,\ell ) = \{f_\theta \in  \mathcal{H}_s^{\beta}(L); \, \exists r_\infty \in [1/L,L] ; \,  |\theta_i^2i^{2\beta+1} - r_\infty^2| \leq \ell(i) , \, i\geq 1\}. $$
 
\begin{proposition}\label{prop: UB:radius:HB}
Consider the fixed design regression model  \eqref{def: reg} with $f_0\in \mathcal{H}_s^{\beta}(L)$ for some $\beta\geq\beta_0>1/2$ and orthogonal basis $\Phi_{K_n}^T \Phi_{K_n}=nI_{K_n}$ (where $K_n>n^{\frac{\beta_0}{(1+2\beta_0)(\beta_0-1/2)}}$). Furthermore assume that the log-prior $\log g(\theta)$ is continuously differentiable on $\mathbb R$. 
Then the hierarchical Bayes credible set centered around the posterior mean $\hat\theta$ and inflated with a factor $m_n\delta_n^{-1/2}$, $ \delta_n  = 1/\log n + \ell(k_n)$  for arbitrary $m_n=o(1)$, has frequentist coverage tending to zero, i.e. for every $\alpha>0$
\begin{align*} 
\limsup_n\sup_{f_0\in \mathcal{H}_{ss}^{\beta}(L,\ell )}P_{\theta_0}^{(n)} \Big( \theta_0\in 
\{\theta:\, d_n(\theta,\hat\theta)\leq m_n \delta_n^{-1/2}r_{\alpha} \} \Big) = 0
 \end{align*}
In particular if $\ell(i) \lesssim 1/\log (i) $, then $\delta_n^{-1/2} \asymp\sqrt{\log n}$. 
\end{proposition}

The proof of the proposition is given in Section \ref{sec: UB:radius:HB} of the Supplementary material \cite{rousseau:szabo:16:supp}.

\begin{remark}\label{rem:counter}
From the proofs of Propositions \ref{prop: UB:radius} and \ref{prop: UB:radius:HB} it appears that the necessity of the logarithmic blow up follows from (1) the sub-optimal behaviour of the posterior mean and (2) the concentration of the posterior  (or the empirical Bayes) distribution  on values of $k$ that are too small. We note, however, that any other summary statistics of the posterior distribution has similar behaviour as the bulk of the posterior is located at a sub-optimal place. One can of course choose other centering points, not related to the posterior, to avoid the $\sqrt{\log n}$ blow up factor. For instance one can consider Lepski's estimator in the sequence model, see Lemma \ref{lem: Lep:center} in the supplement, but this brings us outside of the Bayesian framework and we are hesitant recommending such a solution. 
\end{remark}

\subsection{Application to density estimation using histogram priors} \label{sec: histogram}
 In this section we consider the density estimation model, i.e. we assume to observe $\textbf{Y}=\{Y_1,Y_2,...,Y_n\}$ iid samples from a true density function $p_0$ and our goal is to recover this density. We assume that $p_0$ is continuous, bounded from below by $c_0$ and from above by $C_0$. Furthermore we assume that it belongs to a H\"older smoothness class $\mathcal{H}^{\beta}(L_0)$ for some $\beta\in(0,1]$.
 
We investigate the Bayesian approach using histogram prior distributions, see for instance \cite{SCRICCIOLO:2007,castillo:rousseau:2015,rousseau:szabo:2015:main}. In other words let $\Theta(k)$ denote the collection of $k$-bins random histogram where the bins are regular : $[(j-1)/k, j/k)$, $j =1,..., k$,
 \begin{equation}
p_{\theta} (x) = k\sum_{j=1}^k \theta_j\1_{I_j}(x), \quad \theta_j \geq 0 ,\quad \sum_{j=1}^k \theta_j = 1.\label{def: hist}
 \end{equation}
 We therefore identify $\Theta(k)$ with the $k$-dimensional simplex $\mathcal S_k  = \{ x\in [0,1]^k ; \sum_{i=1}^k x_i = 1\}$.  First we endow the hyper-parameter $k$ with either a Poisson $ Pois(\lambda) $ or a Geometric  $Geom(p)$ hyper-prior with $\lambda >0$ and $0<p<1$. Given $k$ consider a Dirichlet prior $\mathcal D(\alpha_{1,k}, ..., \alpha_{k,k})$  on $(\theta_1,..., \theta_k)$, i.e. the hierarchical prior $\pi$ on the densities takes the form
 \begin{align*}
\theta= (\theta_1,..., \theta_k) | k \sim  \mathcal D(\alpha_{1,k}, ..., \alpha_{k,k}),\quad c_1k^{-a} \leq \alpha_{j,k} \leq C_1\\
 k\sim Geom(p)\, \text{or}\, Pois(\lambda).
 \end{align*}
 for some $a\geq 0$ and $c_1, C_1 >0$. 
Alternatively we apply the MMLE $\hat{k}_n$ for the hyper-parameter $k$ and then consider the Dirichlet prior $\mathcal D(\alpha_{1,\hat{k}_n}, ..., \alpha_{\hat{k}_n,\hat{k}_n})$ on $(\theta_1,..., \theta_{\hat{k}_n})$.

Then we consider the inflated hierarchical Bayes credible set 
$$\hat{C}(L\sqrt{\log n},\a)=\{p_\theta:\, h(p_{\theta},p_{\hat\theta_n})\leq L\sqrt{\log n}r_{\a} \},$$
with $h(.,.)$ the Hellinger distance, $\hat\theta_n$ satisfying assumption \eqref{freq} with $d(\theta,\theta')=h(p_{\theta},p_{\theta'})$ and the radius $r_{\alpha}$ satisfies $\pi(\theta:\, h(p_{\theta},p_{\hat\theta_n})\leq r_{\a}|\textbf{Y})\geq 1-\a$. Note that since the Hellinger metric is bounded and convex, and the posterior distribution contracts around the truth with the optimal rate $\eps_n(k_n)$ the posterior mean satisfies condition \eqref{freq}, see page 507 of \cite{ghosal:ghosh:vdv:00}. The inflated empirical Bayes credible set $\hat{C}_{\hat{k}_n}(L\sqrt{\log n},\a)$ is defined along the same lines. Applying again Theorems \ref{th:coverage} and \ref{th:EmpBayesCoverage} together with Corollary \ref{cor:radius} we can verify that both credible sets  have high frequentist coverage and (almost) rate adaptive size under the general polished tail assumption.

\begin{proposition}\label{prop: hist}
Consider the density estimation model with histogram priors \eqref{def: hist} and assume that $p_0\in \mathcal{H}^{\beta}(L_0)$ for some $\beta\in[\beta_0,1]$, $\beta_0>1/2$, and it is bounded away from zero and infinity. Then both the inflated hierarchical Bayes and empirical Bayes credible sets with centering point $p_{\hat\theta_n}$ satisfying \eqref{freq} have (up to a $log n$ factor) rate adaptive size and frequentist coverage tending to one under the polished tail assumption  $\eqref{cond:tail:L}$, i.e. for every $\eps>0$ there exist $L_\eps,C_\eps>0$ such that
 \begin{align*}
 \liminf_n\inf_{p_0\in \Theta_0\cap \mathcal{H}^{\beta_0}(L_0) } P_{p_0}^{(n)} \big( p_0\in\hat{C}_n(L\sqrt{\log n},\a)) \big)\geq1-\eps,\\
  \liminf_n\inf_{\beta\in[\beta_0,1]}\inf_{p_0\in\mathcal{H}^{\beta}(L_0)}P_{p_0}^{(n)} \Big(  \diam\big(\hat{C}_n(1,\a),h\big)\leq C\big(\frac{n}{\log n}\big)^{-\frac{\beta}{1+2\beta}}\Big)\geq 1-\eps,
 \end{align*}
where $\hat{C}_n(L\sqrt{\log n},\a)$ could either denote the  hierarchical or the empirical Bayes credible sets inflated by an $L\sqrt{\log n}$ multiplier.
\end{proposition}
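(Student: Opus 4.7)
The plan is to verify assumptions $\textbf{H}$ and $\textbf{A0}$--$\textbf{A5}$ (together with \eqref{eq: cond:bias}) for $d=h$ and then invoke Theorems~\ref{th:coverage} and \ref{th:EmpBayesCoverage} together with Corollary~\ref{cor:radius}. Choose $p_{[k]}^o\in\Theta(k)$ to be the $L_2$-projection of $p_0$ onto $\Theta(k)$, i.e.\ the histogram with heights $\theta_{j,k}^o = \int_{I_j}p_0$. Since $p_0\in\mathcal{H}^\beta(L_0)$ with $\beta\in(0,1]$ and is bounded away from $0$ and $\infty$, one obtains $\|p_0-p_{[k]}^o\|_\infty\lesssim k^{-\beta}$ and, using the two-sided bound on both densities, $h^2(p_0,p_{[k]}^o)\asymp\|p_0-p_{[k]}^o\|_2^2\lesssim k^{-2\beta}$. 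Hence $b(k)\lesssim k^{-2\beta}$, $k_n\asymp(n/\log n)^{1/(1+2\beta)}$, and $\epsilon_n(k_n)\asymp(n/\log n)^{-\beta/(1+2\beta)}$, which is the target rate. Although the spaces $\Theta(k)$ are not nested, $b(k)$ is non-increasing on the dyadic subsequence $k,2k,4k,\dots$, so \eqref{eq: cond:bias} holds with $A_0=2$.

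Assumption $\textbf{H}$ is immediate for $Geom(p)$ and $Pois(\lambda)$. For $\textbf{A0}$, the posterior contraction at rate $\epsilon_n(k_n)$ in Hellinger distance is known for these Dirichlet-histogram priors, see \cite{castillo:rousseau:2015, rousseau:szabo:2015:main}, and gives \eqref{freq} for both the posterior mean and typical posterior draws; the bound $k_n\lesssim n^{1/(1+2\beta_0)}$ with $1/(1+2\beta_0)<1$ provides the required $c_0<1$. Define the sieve $\Theta_n(k)=\{\theta\in\mathcal S_k:\,\min_j\theta_j\geq n^{-M}\}$ for $M$ large. The small-ball half of $\textbf{A1}$ is a standard Dirichlet computation on a KL-ball around $p_{[k_n]}^o$ (see \cite{castillo:rousseau:2015}), yielding prior mass at least $e^{-Ck_n\log n}$, while the remaining-mass half follows from standard Dirichlet tail bounds on the minimum coordinate once $M$ is chosen large enough.

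Assumption $\textbf{A2}$ follows from Le Cam--Birg\'e Hellinger tests (which always exist for densities) together with the entropy bound $\log N(\epsilon,\Theta_n(k),h)\lesssim k\log(n/\epsilon)$: on $\Theta_n(k)$ the Hellinger distance is comparable to $\|\cdot\|_2/\sqrt{k}$ on the heights and $\Theta_n(k)\subset[n^{-M},1]^k$ is a $k$-dimensional box, which yields the stated entropy and hence \eqref{cond:entropy} once $u\geq J_1\sqrt{k\log n/n}$. Assumption $\textbf{A3}$ follows because on $B_k(p_{[k]}^o,\sqrt{k/n},h)\cap\Theta_n(k)$ both $p_{[k]}^o$ and $p_\theta$ are bounded from below, so the normalised KL and its centred $r$-th moment are comparable to $h^2$ and its powers; combined with the bias contribution $KL(p_0,p_{[k]}^o)\lesssim b(k)\leq k/n$ for $k\geq k_n$, this produces the $S_n$-inclusion. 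Assumption $\textbf{A4}$ is obtained by bounding the Dirichlet density on $\mathcal S_k$ uniformly from above by $e^{Ck\log n}$ and then bounding the Lebesgue volume of the Hellinger ball of radius $J_1\sqrt{k\log n/n}$; the resulting bound $(\sqrt{k\log n/n})^{k-1}e^{Ck\log n}$ is $\leq e^{-(c_2+c_3+c_4+\gamma)n\epsilon_n^2(k_n)}$ for $k\geq M_0 k_n$ and $M_0$ large.

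The main obstacle is $\textbf{A5}$. For the first part, since $\log p_\theta$ is bounded by $M\log n$ on $\Theta_n(k)$ and the entropy bound above controls fluctuations, a Bernstein-plus-chaining argument shows that with $P_{\theta_0}$-probability at least $1-\eps$ one has $\sup_\theta\bigl[\ell_n(\theta)-\ell_n(p_{[k]}^o)\bigr] \leq B_\eps k$ uniformly over the Hellinger-neighbourhood of $\theta_0$ in $\Theta_n(k)$ and over $k\in\mathcal K_n(M)$ (recall that $|\mathcal K_n(M)|\lesssim\log n$ and all such $k$ are comparable to $k_n$). For the second part, on $\Theta_n(k)$ the Dirichlet density is bounded above and below by $e^{\pm Ck\log n}$ times Lebesgue measure, so up to multiplicative $e^{O(k)}$ factors (from linearising $p_\theta$ around $p_{[k]}^o$) both numerator and denominator reduce to Lebesgue volumes of Hellinger balls on $\mathcal S_k$; the change of variables $\theta_j\mapsto\sqrt{\theta_j}$ shows that such balls are comparable to Euclidean balls of the same radius, and the ratio is therefore $\lesssim\delta_{n,k}^{k-1}\leq e^{c_9 k\log\delta_{n,k}}$, giving $\textbf{A5}$. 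Combining all verifications with the polished-tail hypothesis, Theorems~\ref{th:coverage} and \ref{th:EmpBayesCoverage} yield the coverage statement and Corollary~\ref{cor:radius} the diameter bound uniformly in $\beta\in[\beta_0,1]$.
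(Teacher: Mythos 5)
Your overall strategy (verify \textbf{H}, \textbf{A0}--\textbf{A5} and \eqref{eq: cond:bias} for $d=h$, then invoke Theorems \ref{th:coverage}, \ref{th:EmpBayesCoverage} and Corollary \ref{cor:radius}) is the paper's, and several pieces match it: the projection $\theta^o_{[k],j}=\int_{I_j}p_0$, the bias bound $b(k)\lesssim k^{-2\beta}$ giving $k_n\asymp (n/\log n)^{1/(1+2\beta)}$, the dyadic argument for \eqref{eq: cond:bias}, the Hellinger tests and entropy, and the KL--Hellinger comparison for \textbf{A1}/\textbf{A3}. But there is a genuine gap in your choice of sieve. You set $\Theta_n(k)=\{\theta\in\mathcal S_k:\min_j\theta_j\geq n^{-M}\}$ with a \emph{fixed} large $M$ and claim the remaining-mass half of \textbf{A1} follows from Dirichlet tail bounds. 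It does not: with parameters $c_1k^{-a}\leq\alpha_{j,k}\leq C_1$, the marginal of a single coordinate is a Beta with first parameter at most $C_1$, so $\pi_{|k}(\theta_1\leq n^{-M})\gtrsim n^{-MC_1}$, i.e.\ only polynomially small in $n$, whereas \textbf{A1} demands $\pi_{|k}(\Theta_n(k)^c)\lesssim e^{-(c_2+c_3+c_4+2)n\epsilon_n^2(k_n)}$ with $n\epsilon_n^2(k_n)\asymp k_n\log n\to\infty$ polynomially. No fixed $M$ can achieve this, and the failure propagates: your verifications of the entropy bound, of \textbf{A4} (``Dirichlet density bounded by $e^{Ck\log n}$ on $\mathcal S_k$'' is in fact false on the full simplex when some $\alpha_{j,k}<1$, and on your sieve the bound is $e^{Mk\log n}$, which for large $M$ is not beaten by your volume estimate), and of \textbf{A5} (boundedness of $\log p_\theta$ and the Hellinger--$\ell_2$ comparability, which holds only where the coordinates are of order $1/k$, not uniformly on $\{\min_j\theta_j\geq n^{-M}\}$) all lean on this sieve.

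The paper avoids the sieve entirely by taking $\Theta_n(k)=\Theta(k)$, so the remaining-mass condition is trivial, and it recovers the needed coordinate control not from a sieve but from the localization itself: its Lemma on Hellinger neighbourhoods shows that any $\theta$ with $h(p_\theta,p_{\theta^o_{[k]}})\lesssim\sqrt{\mu_nk/n}$ automatically satisfies $\theta_j\asymp 1/k$, which is what makes \textbf{A3}, \textbf{A4} and both parts of \textbf{A5} go through (with the sharper volume $(\rho/\sqrt k)^{k-1}$ for Hellinger balls, and the simplex-hyperplane observation for the volume ratio in \textbf{A5}). Also, your first part of \textbf{A5} is only a sketch: a generic Bernstein-plus-chaining claim, together with the incorrect assertion $|\mathcal K_n(M)|\lesssim\log n$ (Lemma \ref{rem:Kn} gives $|\mathcal K_n(M)|\asymp k_n$), does not deliver the required uniform $O(k)$ bound; the paper instead exploits the multinomial sufficient statistics, writing $\ell_n(\theta)-\ell_n(\theta^o_{[k]})$ explicitly in terms of the bin counts $n_j$, applying Cauchy--Schwarz against the negative quadratic drift, and controlling $\sum_j(n_j-n\theta^o_j)^2$ via a fourth-moment/variance computation that yields a per-$k$ failure probability $O(1/k)$, small enough to union over the $\asymp k_n$ values in $\mathcal K_n(M)$. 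To repair your argument, drop the sieve, prove the coordinate-localization lemma on Hellinger balls, and redo \textbf{A4} and \textbf{A5} on those localized sets.
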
 
The proposition is verified in Section \ref{sec: hist} of the Supplementary material \cite{rousseau:szabo:16:supp}.

\begin{remark}
Using Lemma 3 of the Supplementary material, we have  $h(p_0, p_\theta) \asymp \| p_0- p_\theta\|_2$ in a neighbourhood of $p_0$ if $k$ is not too large, so that the polished tail condition in the Hellinger distance is equivalent to the polished tail condition in the $L_2$-norm (associated to different constants). To understand the latter note that if $p_{0,[k]}$ is the $L_2$ projection of $p_0$ and $b_2(k)$ is the $L_2$ bias,  then for any positive integer  $R_0 $,
$b_2(k)  = b_2(2R_0 k) + \| p_{0,[k]} - p_{0,[2R_0k]}\|_2^2 $ so that  the $L_2$ polished tail condition is equivalent to 
$$\| p_{0,[k]} - p_{0,[2R_0k]} \|_2^2 \geq (1-\tau) \| p_0 - p_{0,[k]}\|_2^2,$$
which has a similar flavour to the polished tail condition of \cite{szabo:vdv:vzanten:13}.
\end{remark}
\subsection{Application to density estimation with exponential families of prior}
In this subsection we consider again the density estimation problem on $[0,1]$, i.e. we assume that we observe independent and identically distributed draws  $\textbf{Y}=\{Y_1,Y_2,...,Y_n\}$ from a distribution with density function $f_0$ (with respect to the Lebesgue measure). Then we
assume that the true density can be written as an infinite dimensional exponential distribution 
\begin{align}
f_0(x) = \exp\Big( \sum_{j=1}^\infty \theta_{0,j} \phi_j(x) - c(\theta_0) \Big), \quad x\in[0,1],\label{def: loglin}\\
\text{with}\quad e^{c(\theta_0)} = \int_0^1 \exp\Big( \sum_{j=1}^\infty\theta_{0,j} \phi_j(x) \Big)dx,\nonumber
\end{align}
for some $\theta_0=(\theta_{0,1},\theta_{0,2},...)\in\ell_2$. For any $\theta\in\ell_2$ we define $f_{\theta}=\exp(\sum \theta_j\phi_j-c(\theta))$ and hence $f_0=f_{\theta_0}$. This model is also known as the log-linear model. Furthermore we also assume that $\|\log f_0 \|_\infty <+\infty $, that $\phi_j(x)$, $j=1,2,...$ forms an orthonormal basis (together with $\phi_0(x)\equiv 1$ and therefore satisfies  $\int_0^1\phi_j(x)dx = 0$ for all $j\geq 1$)  and that $\theta_0 \in \mathcal S^\beta (L_0)$ for some $\beta,L_0>0$ as in \eqref{def: sobolev}. 

Then we define the prior distribution on the densities with hyper-parameter $k$ by endowing the sequence $\theta$  in the log-linear model with the standard sieve prior, i.e.
\begin{align*}
\theta=(\theta_1,...,\theta_k) | k\sim \prod_{i=1}^k g(\theta_i),\\
k\sim Geom(p)\, \text{or}\, Pois(\lambda),
 \end{align*}
 for some fixed $p\in(0,1)$ or $\lambda>0$ and $g(.)$ satisfying \eqref{eq: sieve_prior}. Alternatively one can estimate $k$ from the data by the MMLE and plug in the estimator $\hat{k}_n$ into the posterior distribution. Similarly to Section \ref{sec:reg}, here $\Theta(k) = \mathbb R^k$. 

These type of priors were considered for instance in \cite{verdinelli:wasserman:1998,vvvz08,rivoirard:rousseau:09,rivoirard:rousseau:12,arbel,rousseau:szabo:2015:main}, where rate adaptive posterior contraction rates were derived. However, the reliability of Bayesian uncertainty quantification in this model was not investigated yet in the literature.

By using the corresponding posterior distribution we construct the inflated hierarchical credible set as
\begin{align*}
\hat{C}(L\sqrt{\log n},\a)=\{f_{\theta}:\, h(f_{\theta},f_{\hat\theta_n})\leq L\sqrt{\log n} r_{\a} \},
\end{align*}
where $h(.,.)$ denotes the Hellinger distance, the radius $r_\alpha$ satisfies $\pi (\theta:\, h(f_{\theta},f_{\hat\theta_n})\leq  r_{\a}|\textbf{Y})\geq 1-\a $ and  $\hat\theta_n$ is an arbitrary estimator satisfying \eqref{freq} with $d(\theta,\theta')=h(f_{\theta},f_{\theta'})$. We note that similarly to the histogram example above the posterior mean satisfies condition \eqref{freq} hence can be used as a centering point of the credible set. The construction of the inflated empirical Bayes credible set  $\hat{C}_{\hat{k}_n}(L\sqrt{\log n},\a)$ goes similarly.
Using again Theorems \ref{th:coverage} and \ref{th:EmpBayesCoverage} together with Corollary \ref{cor:radius} we can verify that the preceding credible sets have high frequentist coverage and (almost) rate adaptive size under the general polished tail assumption.

\begin{proposition}\label{prop: loglin}
Consider the log-linear model \eqref{def: loglin}. Then both the inflated hierarchical and empirical Bayes credible sets have (up to a $\log n$ factor) rate adaptive size and frequentist coverage tending to one under the general polished tail assumption  $\eqref{cond:tail:L}$, i.e. for every $\beta_0>1/2$ and $\eps>0$ there exist $L_\eps,C_\eps>0$ such that
 \begin{align*}
\liminf_{n} \inf_{\theta_0\in \Theta_0\cap \mathcal{S}^{\beta_0}(L_0) } P_{\theta_0}^{(n)} \big( f_{\theta_0}\in \hat{C}_n(L_\eps\sqrt{\log n},\a) \big)\geq 1-\eps,\\
\liminf_{n} \inf_{\beta\geq\beta_0}\inf_{\theta_0\in \mathcal{S}^{\beta}(L_0)}P_{\theta_0}^{(n)} \Big( \diam\big(\hat{C}_n(1,\a) ,h\big)\leq C_\eps \big(\frac{n}{\log n}\big)^{-\frac{\beta}{1+2\beta}}\Big)\geq 1-\eps,
 \end{align*}
 where $\hat{C}_n(L_\eps\sqrt{\log n},\a)$ denotes either the inflated hierarchical or empirical Bayes credible set with a blow up factor $L_\eps\sqrt{\log n}$.
\end{proposition}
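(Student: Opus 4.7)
The plan is to apply Theorem \ref{th:coverage} (with assumption \textbf{H}) and Theorem \ref{th:EmpBayesCoverage}, together with Corollary \ref{cor:radius}, for the log-linear model with semi-metric $d(\theta,\theta')=h(f_\theta,f_{\theta'})$. Throughout I would take the sieve
$\Theta_n(k)=\{\theta\in\mathbb R^k:\|\theta\|_\infty\leq M_n\}$ for a slowly growing $M_n=O(\log n)$, so that on $\Theta_n(k)$ the log-density $\sum_j\theta_j\phi_j-c(\theta)$ is uniformly bounded (assuming the $\phi_j$ are uniformly bounded) and Lemma 3 of the supplement gives $h(f_\theta,f_{\theta_0})\asymp \|\theta-\theta_0\|_2$ locally. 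For $\theta_0\in\mathcal S^{\beta_0}(L_0)$ this yields $b(k)\asymp k^{-2\beta_0}$ and hence $k_n\asymp(n/\log n)^{1/(1+2\beta_0)}$, $\epsilon_n^2(k_n)\asymp(n/\log n)^{-2\beta_0/(1+2\beta_0)}$, which matches the size claim once verified. Assumption \textbf{H} is immediate for $\mathrm{Geom}(p)$ or $\mathrm{Pois}(\lambda)$, and condition \eqref{eq: cond:bias} holds since $\Theta(k)\subset\Theta(k+1)$ so $b$ is nonincreasing.

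For \textbf{A0} I would take $\hat\theta_n$ to be the posterior mean and invoke the standard Hellinger contraction argument of \cite{ghosal:ghosh:vdv:00} together with the adaptive contraction rate of \cite{rivoirard:rousseau:12,arbel,rousseau:szabo:2015:main}. Assumption \textbf{A1} rests on small-ball computations for the product prior $g^{\otimes k}$ with $g$ satisfying \eqref{eq: sieve_prior}: in a Hellinger ball of radius $\epsilon_n(k_n)$ around $\theta_0$ one centers at the KL-projection $\theta_{[k_n]}^o$, uses $KL(\theta_0,\theta)\lesssim\|\theta-\theta_{[k_n]}^o\|_2^2+b(k_n)$, and applies the lower bound in \eqref{eq: sieve_prior} to boxes of side $\epsilon_n(k_n)/\sqrt{k_n}$. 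The sieve-complement bound $\pi_{|k}(\Theta_n(k)^c)\lesssim e^{-Cn\epsilon_n^2(k_n)}$ follows from the Gaussian/Laplace-type tails in \eqref{eq: sieve_prior} with $M_n$ large enough. Because $\Theta(k)=\mathbb R^k$ is unbounded, I would use \textbf{A2'} rather than \textbf{A2}: slice by $B_{n,j}(k)=\Theta_n(k)\cap\{j\epsilon_n(k_n)\leq h(f_0,f_\theta)\leq(j+1)\epsilon_n(k_n)\}$, setting $c(k,j)=j$. The Hellinger/$\ell_2$ equivalence turns the entropy bound \eqref{cond: A4k'_3} into a standard Euclidean volume argument, and the summability \eqref{cond: A4k'_2} is a Gaussian-tail geometric sum, while the tests in \textbf{A2} are the usual Hellinger likelihood-ratio tests.

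Assumptions \textbf{A3} and \textbf{A4} are where the exponential family structure is used essentially. For \textbf{A3} I would Taylor-expand the cumulant $c(\theta)$ around $\theta_{[k]}^o$: writing $\eta=\theta-\theta_{[k]}^o$,
\[
E_{\theta_0}\log\frac{f_{\theta_{[k]}^o}}{f_\theta}=-\sum_j\eta_j E_{\theta_0}\phi_j+\bigl(c(\theta)-c(\theta_{[k]}^o)\bigr),
\]
and the cross term vanishes (or is $O(\|\eta\|_2^2)$) because $\theta_{[k]}^o$ is the $KL$-projection on $\Theta(k)$, while $c(\theta)-c(\theta_{[k]}^o)\lesssim\|\eta\|_2^2$ on the bounded sieve. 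Combined with $\|\eta\|_2^2\lesssim k/n$ this gives the $O(k)$ bound in $S_n(k,c_7,c_8,r)$, and the $r$-th moment bound is controlled by the sub-Gaussianity of the centered log-ratio under $P_{\theta_0}$ when $\|\eta\|_\infty$ is controlled. For \textbf{A4}, Hellinger/$\ell_2$ equivalence plus the upper bound in \eqref{eq: sieve_prior} gives
\[
\pi_{|k}\bigl(B_k(\theta_0,J_1\sqrt{k\log n/n},d)\cap\Theta_n(k)\bigr)\lesssim\bigl(G_3\sqrt{k\log n/n}\bigr)^k\lesssim e^{-ck\log n},
\]
which dominates $e^{-(c_2+c_3+c_4+\gamma)n\epsilon_n^2(k_n)}$ for $k\geq M_0 k_n$.

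The main obstacle is \textbf{A5}. The second part (the prior-mass ratio) is manageable: the product form of $g^{\otimes k}$ together with \eqref{eq: sieve_prior} gives, for any $\theta$ within $(M_\eps+1)\epsilon_n(k_n)$ of $\theta_0$ (hence with bounded coordinates on $\Theta_n(k)$), a ratio bounded by $c_{10}\delta_{n,k}^{c_9 k}=c_{10}e^{c_9 k\log\delta_{n,k}}$ up to universal constants coming from the density bounds. The first part requires a uniform in $\theta$ control of $\ell_n(\theta)-\ell_n(\theta_{[k]}^o)$ over a Hellinger neighbourhood simultaneously for all $k\in\mathcal K_n(M)$: I would use a Bernstein/Talagrand empirical-process bound applied to the centered log-ratio class $\{\log(f_{\theta_{[k]}^o}/f_\theta)-E_{\theta_0}\log(f_{\theta_{[k]}^o}/f_\theta)\}$, whose envelope is $O(M_n)$ on the sieve and whose variance is $O(\|\theta-\theta_{[k]}^o\|_2^2)=O(\epsilon_n^2(k_n))$, together with the entropy bound supplied by \textbf{A2'} and a peeling/union bound over the at most $O(k_n)$ values of $k\in\mathcal K_n(M)$ (by Lemma \ref{rem:Kn}). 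This is the step where the restriction $\bar K_n=n^{1/(1+2\beta_0)}=o(n^{1/2})$ enters: it keeps the total complexity term $\bar K_n\log n$ of order $o(n\epsilon_n^2(k_n))$ after the peeling, so that the additive $B_\eps k$ slack in \textbf{A5} absorbs the empirical-process fluctuation with probability $1-\eps$. Once all of \textbf{A0}--\textbf{A5} are in hand, Theorem \ref{th:coverage} and Theorem \ref{th:EmpBayesCoverage} yield the coverage statement, and Corollary \ref{cor:radius} combined with $\epsilon_n(k_n)\asymp(n/\log n)^{-\beta/(1+2\beta)}$ for $\theta_0\in\mathcal S^\beta(L_0)$ produces the claimed diameter bound.
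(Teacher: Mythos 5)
Your overall route is the paper's route: verify \textbf{A0}, \textbf{A1}, \textbf{A2'}, \textbf{A3}--\textbf{A5} for the Hellinger distance, apply Theorems \ref{th:coverage} and \ref{th:EmpBayesCoverage} and Corollary \ref{cor:radius}, and compute $k_n\asymp (n/\log n)^{1/(1+2\beta)}$, $\eps_n(k_n)\asymp (n/\log n)^{-\beta/(1+2\beta)}$. However, two of your choices create genuine gaps. First, the sieve $\Theta_n(k)=\{\|\theta\|_\infty\leq M_n\}$ with $M_n=O(\log n)$ cannot satisfy the remaining-mass part of \textbf{A1}: the tail bound from \eqref{eq: sieve_prior} only gives $\pi_{|k}(\Theta_n(k)^c)\lesssim k\,e^{-G_4M_n^q}=e^{-O(\log^q n)}$, while the requirement is $e^{-Cn\eps_n^2(k_n)}$ with $n\eps_n^2(k_n)\asymp k_n\log n$ polynomial in $n$. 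To fix this the sieve radius must grow polynomially (the paper takes $\Theta_n(k)=\{\|\theta\|_2\leq C_1\sqrt{k}(n\eps_n^2)^{1/q}\}$), but then your repeated appeals to ``the log-density is uniformly bounded on the sieve'' — used for the global Hellinger/$\ell_2$ equivalence in your entropy argument, for the cumulant expansion in \textbf{A3}, and for the $O(M_n)$ envelope in your empirical-process bound for \textbf{A5} — are no longer available. The paper circumvents this by using the equivalence $h(f_\theta,f_{\theta_0})\asymp\|\theta-\theta_0\|_2$ only where $\|\theta-\theta_0\|_2\lesssim 1/\sqrt{k}$ (so $\ell_1$-norms stay bounded automatically), lumping all farther $\theta$ into a single slice handled via the cruder inequality $\|\theta-\theta_0\|_2^2\lesssim h^2(f_\theta,f_{\theta_0})\big(k\|\theta-\theta_0\|_2^2+\log^2 h(f_\theta,f_{\theta_0})\big)$, which forces $h\gtrsim k^{-1/2}/\log n$ there, and controlling its entropy through the Lipschitz bound $h\lesssim\sqrt{k}\,\|\theta-\theta'\|_2$.

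Second, your explanation of where $\bar K_n=n^{1/(1+2\beta_0)}=o(n^{1/2})$ is needed does not work. Assumption \textbf{A5} only concerns $k\in\mathcal K_n(M)$, i.e.\ $k\lesssim k_n$, so $\bar K_n$ plays no role there; moreover your bookkeeping ``$\bar K_n\log n=o(n\eps_n^2(k_n))$'' is false whenever the truth is smoother than $\beta_0$, since then $n\eps_n^2(k_n)\asymp k_n\log n\ll\bar K_n\log n$. In the paper the restriction enters in \textbf{A2'}, precisely for the far slice just described: with $c(k,J_n+1)\asymp k^{-1/2}\eps_n^{-1}/\log n$ one needs $k\log n=o\big(nc(k,J_n+1)^2\eps_n^2\big)=o\big(n/(k\log^2 n)\big)$, i.e.\ $k^2\log^3 n=o(n)$ for all $k\leq\bar K_n$, which is exactly what $\bar K_n\leq n^{1/2-\eps}$ buys. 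Relatedly, for the first part of \textbf{A5} no chaining/Talagrand machinery is needed: in this exponential family $\ell_n(\theta)-\ell_n(\theta_{[k]}^o)$ is linear in $\theta-\theta_{[k]}^o$ up to the deterministic cumulant difference, so a Cauchy--Schwarz bound by $\|\theta-\theta_{[k]}^o\|_2\,\|\Phi(\mathbf{Y})-E_{f_0}\Phi(\mathbf{Y})\|_2$ plus a second-moment Markov bound (and the negative quadratic drift $-nc_0\|\theta-\theta_{[k]}^o\|_2^2$) already yields the $O(k)$ control; your heavier argument is both unnecessary and, as it stands, unsupported because of the envelope issue above.
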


The proof of the proposition is given in Section \ref{sec: proof_loglin} of the Supplementary material \cite{rousseau:szabo:16:supp}.

 \begin{remark}
In view of Lemma \ref{lem: help_loglin_1} in the supplement we note that the rate and coverage statements of Proposition \ref{prop: hist} also hold for the $\ell_2$-metric. 
\end{remark}

\begin{remark}
Again, similarly to before, if $f_{\theta_0} \in \mathcal S^{\beta_0}(L)$ with $\beta_0 >1/2$ and if $k \leq \bar K_n$ then for all $k_0\leq k \leq k_n$  we have
$\|\theta_0 - \theta_{0,[k]}\|_2 \leq Lk^{-\beta_0}$, where $\theta_{0,[k]}=(\theta_{0,1},...,\theta_{0,k},0,0,...)$, and if $k_0 \geq (L/\epsilon)^{1/\beta_0}$ with $\epsilon >0$ arbitrarily small, using Lemma 5 in the Supplementary material \cite{rousseau:szabo:16:supp}, 
$$b(k ) \asymp \|\theta_0 - \theta_{0,[k]}\|_2^2,\qquad \text{for $k\geq k_0$}.$$
Therefore the parameters $\theta$ satisfying the $L_2$ polished tail condition of \cite{szabo:vdv:vzanten:13}, see also \eqref{def: PT:original}, is a subset of $\Theta_{0}$. 
\end{remark}

\subsection{Application to nonparametric classification}  \label{sec:classif}
In this section we apply our general theorem to the nonparametric classification (or also known as binary regression) model. We assume to observe the sequence $\textbf{Y}=(Y_1,Y_2,...,Y_n)\in\{0,1\}^n$ satisfying
\begin{align}
P(Y_i=1|x_i )=q_0(x_i),\quad \text{for some $q_0: [0,1]\mapsto (0,1)$},\label{def: class}
\end{align}
with $x_i\in[0,1]$, $i=1,...,n$ fixed design points. We also take  $\mu(x)=e^{x}/(1+e^x)$ to be the logistic link function.

We assume that  under the true distribution associated to  $q_0$,   $f_0=\mu^{-1}(q_0)\in \mathcal{S}^{\beta}(L_0)$, with unknown smoothness parameter $\beta>0$. Minimax estimation rates with respect to the $L_2$-norm , i.e. $n^{-\beta/(1+2\beta)}$ for $S^{\beta}(L_0)$, and an adaptive estimator achieving these rates in the random design case was derived for instance in \cite{yang:1999, yang:1999b}. Similarly to the density estimation and classification models it was also shown that with respect to the $L_2$-norm it is impossible to construct adaptive confidence sets over a full scale of regularity classes $\cup_{\beta\geq\beta_0}S^{\beta}(L_0)$, for any $\beta_0>0$, see \cite{mukherjee:2018}.

 In the Bayesian approach one endows the nonparametric function $f$ with a prior distribution resulting in a prior on the binary regression function $q$.  The theoretical properties of the Bayesian approach in the present model were investigated for instance in \cite{ghosal:vdv:07} with linear function $f$, in \cite{vvvz08} with Gaussian process priors on the nonparametric function $f$ and in \cite{kirichenko:2015} in context of classification of the nodes of large graphs. In the preceding papers adaptive posterior contraction rates were derived. However, the coverage properties of Bayesian credible sets remained unknown. Due to the lack of an explicit formula for the posterior distribution direct computations are not feasible to quantify the reliability of Bayesian credible sets. Therefore, we tackle this until now unanswered question by applying our general, abstract theorem.

In our analysis we consider again the popular sieve prior. For given $k$ we introduce the parametrization
\begin{align*}
f_{\theta}(x_i)=\sum_{j=1}^{k}\theta_j \phi_j(x_i)= \Phi_k(x_i)\theta,
\end{align*}
with $\theta=(\theta_1,...,\theta_k)^T\in \Theta(k) = \mathbb{R}^k$ and $\Phi_k(x_i)=\big(\phi_1(x_i),\phi_2(x_i),...,\phi_k(x_i)\big)$, as in Section \ref{sec:reg}, satisfying assumption \eqref{assump: reg_design_mtx}. We work with the average (empirical) Hellinger semi-metric
 \begin{equation*}
 \begin{split}
 h_n^2(q_1, q_2) &= \frac{1}{  n } \sum_{i=1}^n h_b^2( q_1(x_i),  q_2(x_i)) , \qquad\text{where}\\
 h_b( q_1(x_i),  q_2(x_i)) &= (\sqrt{q_1(x_i)} -\sqrt{ q_2(x_i)})^2 + (\sqrt{1 -q_1(x_i)} -\sqrt{1- q_2(x_i)})^2.
 \end{split}
 \end{equation*}

\begin{remark}\label{rem: UBforEps_n}
Since assumption \eqref{assump: reg_design_mtx} is in a general and weak form, similarly to the nonparametric regression example we have to slightly strengthen our polished tail assumption. To see this first note that $ h_n^2(q_1, q_2) \leq d_n^2(f_1, f_2)$ with $f_j(x) = \mu^{-1}(q_j(x))$, $j=1,2$.
 Similarly to before, to understand the coverage properties of the credible balls, we need to study the bias function $b(k)$ with respect to the semi-metric  $h_n$.   Assume  $\theta_0 \in \mathcal S^\beta (L_0) $ for $\beta \geq \beta_0>1/2$ and $L_0>0$. Denote by $\tilde b( .  ) $ the bias function associated to $d_n(f_{\theta_0}, f_\theta)$ and studied in Section \ref{sec:reg}. Assume that $K_n$ satisfies $\tilde b(K_n) \leq \delta K_n (\log n)/n$ for some small enough $\delta$. Then since $b(K_n) \leq \tilde b(K_n)$ we get $b(K_n) \leq \delta K_n (\log n)/n$.  The discussion on the feasibility of the constraint $\tilde b(K_n) \leq \delta K_n (\log n)/n$ is similar to that of Section  \ref{sec:reg}.  As in the case of the regression model, using \eqref{eq: equiv:metric} of the Supplementary material \cite{rousseau:szabo:16:supp}, if $f_{\theta_0} \in \mathcal S^{\beta_0}(L)$ with $\beta_0>1/2$, $d_n(\theta, \theta_0) \asymp h_n(\theta, \theta_0)$ locally. Using the same arguments as in Section \ref{sec:reg}, if $\theta_0$ satisfies the $L_2$ polished tail condition of  \cite{szabo:vdv:vzanten:13}, then it satisfies the generalized polished tail condition. 

\end{remark}

In this example we consider the prior 
\begin{align*}
\theta=(\theta_1,...,\theta_k) | k\sim \prod_{i=1}^k g(\theta_i),\\
k\sim Geom(p)\, \text{or}\, Pois(\lambda),
\end{align*}
with $g(.)$ satisfying \eqref{eq: sieve_prior}, and $p\in(0,1)$ or $\lambda>0$, resulting in the two level hierarchical prior $\pi(.)$. Alternatively, we estimate $k$ using the MMLE and plug it in into the posterior for $\theta$ given $k$. Then we consider credible balls in terms of $q(x) = \mu(f(x) ) $, and the empirical Hellinger semi-metric $h_n(.,.)$.

The inflated hierarchical Bayes credible balls are defined as
 $$\hat{C}(L\sqrt{\log n}, \a) = \{ q_\theta (.):\, h_n( q_\theta , \hat q_{n}) \leq L\sqrt{\log n} r_\a\},$$ 
with radius $r_{\alpha}$ given by $\pi(\theta:\, h_n( q_\theta , \hat q_{n}) \leq r_\a| \mathbf{Y})\geq 1-\a$ and taking the posterior mean $\hat q_{n}=E_{\pi(.|\textbf{Y})}(q_{\theta})$ as the centering point.  Note that by convexity and boundedness of $q\rightarrow h_n^2(q,q_0)$, the posterior mean $\hat q_n$ satisfies condition \eqref{freq}. Alternatively we can use any centering point satisfying condition \eqref{freq}. The inflated  empirical Bayes credible ball $\hat{C}_{\hat k_n}(L\sqrt{\log n},\a)$ is defined similarly.

By applying our main Theorems \ref{th:coverage} and \ref{th:EmpBayesCoverage} and Corollary \ref{cor:radius} we show that under the general polished tail assumption $\eqref{cond:tail:L}$ both of the inflated credible sets have (nearly) optimal frequentist behaviour.

\begin{proposition}\label{prop: classification}
Consider the classification model given in \eqref{def: class} with $q_0=\mu(f_{\theta_0})$ satisfying  $\theta_0\in S^{\beta}(L_0)$, $\beta\geq\beta_0>1/2$ and $K_n\gg n^{\frac{1}{2(\beta_0-1/2)}}$. Then both the inflated hierarchical and empirical Bayes credible sets $\hat{C}_n(L\sqrt{\log n}, \a)$ - denoting either $\hat{C}(L\sqrt{\log n}, \a)$ in the hierarchical approach or $\hat{C}_{\hat{k}_n}(L\sqrt{\log n}, \a)$ in the empirical approach - have (up to a $\log n$ factor) rate adaptive size and frequentist coverage arbitrary close to one under the polished tail assumption, i.e.  for every $\eps>0$ there exist constants $L, C>0$ such that
 \begin{align*}
 \liminf_n\inf_{\theta_0 \in \Theta_{0,n}\cap  \mathcal S^{\beta_0}(L_0)} P_{\theta_0}^{(n)} \big( q_{\theta_0}\in \hat{C}_n(L_\eps\sqrt{\log n}, \a)  \big)\geq 1-\eps,\\
 \liminf_n\inf_{\beta\geq\beta_0}\inf_{ \theta_0 \in \mathcal S^{\beta}(L_0)}P_{\theta_0}^{(n)} \Big( \diam\big(\hat{C}_n(1, \a) ,h_n\big)\leq C_\eps \big(\frac{n}{\log n}\big)^{-\frac{\beta}{1+2\beta}}\Big)\geq 1-\eps.
 \end{align*}
\end{proposition}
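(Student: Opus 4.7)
The plan is to verify the hypotheses of Theorems \ref{th:coverage} and \ref{th:EmpBayesCoverage} together with Corollary \ref{cor:radius} for the classification model, and then read off the conclusion. Condition \textbf{H} on the hyper-prior is immediate for Geometric or Poisson priors. The polished tail assumption on $q_0$ is reduced to the $L_2$ polished tail on $\theta_0$ by Remark \ref{rem: UBforEps_n}, which uses the Lipschitz properties of $\mu$ and the local equivalence $h_n(q_\theta, q_{\theta'}) \asymp d_n(f_\theta, f_{\theta'})$ available when $\theta,\theta'$ are near $\theta_0$ in a model of moderate dimension. The bias function $b(k)$ in the $h_n$ semi-metric is thus comparable to the $d_n$ bias of Section \ref{sec:reg}, and under $\theta_0 \in \mathcal S^\beta(L_0)$ with $\beta \geq \beta_0 > 1/2$ one has $k_n \asymp (n/\log n)^{1/(1+2\beta)}$ and $\epsilon_n^2(k_n) \asymp (n/\log n)^{-2\beta/(1+2\beta)}$, which will supply the stated diameter rate.

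The key setup is the choice of sieves $\Theta_n(k) = \{\theta \in \mathbb R^k : \|\theta\|_2 \leq (\log n)^a\}$ for a suitable $a$, which together with \eqref{assump: reg_design_mtx} forces $\|f_\theta\|_\infty$ to grow at most polylogarithmically, so that $\mu(f_\theta)$, $\log \mu(f_\theta)$ and $\log(1-\mu(f_\theta))$ remain Lipschitz with mild constants. The centering point is the posterior mean $\hat q_n$; by convexity of $q\mapsto h_n^2(q,q_0)$ and boundedness of $h_n$, assumption \textbf{A0} follows from the adaptive contraction results of \cite{arbel, ghosal:vdv:07}. Assumption \textbf{A1} is standard for sieve priors satisfying \eqref{eq: sieve_prior} combined with \eqref{assump: reg_design_mtx}: the KL and variance of a Bernoulli log-likelihood ratio are bounded by a constant multiple of $d_n^2$ in a neighbourhood of $f_0$, reducing to the usual Euclidean small-ball probability on $\mathbb R^k$. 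Assumption \textbf{A2} uses the Hellinger tests of \cite{ghosal:vdv:07} adapted to the semi-metric $h_n$ and the polynomial entropy estimate $\log N(u,\Theta_n(k),d_n) \lesssim k \log(1/u)$. Assumption \textbf{A4} follows from \eqref{eq: sieve_prior}, \eqref{assump: reg_design_mtx} and the translation of an $h_n$-ball into a Euclidean ball of comparable radius.

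The most delicate part will be \textbf{A3} and \textbf{A5}. For \textbf{A3} I take $\theta_{[k]}^o$ to be the $d_n$-projection of $f_0$ onto $\{\Phi_k\theta\}$. Writing the classification log-likelihood explicitly, $E_{\theta_0}^{(n)}[\ell_n(\theta_{[k]}^o)-\ell_n(\theta)]$ decomposes into a piece bounded by a constant times $nh_n^2(q_{\theta_{[k]}^o},q_\theta) \lesssim n d_n^2(\theta_{[k]}^o,\theta) \lesssim k$ on the ball of radius $\sqrt{k/n}$, plus a cross term involving $f_0 - f_{\theta_{[k]}^o}$ which, by the $d_n$-orthogonality of the projection under \eqref{assump: reg_design_mtx}, vanishes up to negligible nonlinear remainders controlled by the Lipschitz bounds on $\mu'$. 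The $r$-th moment condition follows analogously from sub-Gaussian bounds for centered Bernoulli log-likelihoods. For the first part of \textbf{A5} the deviation $\ell_n(\theta) - \ell_n(\theta_{[k]}^o) + n[\KL(\theta_0,\theta)-\KL(\theta_0,\theta_{[k]}^o)]$ is a centered empirical process indexed by a bounded, $k$-dimensional class, and chaining with bracketing entropy of order $k$ yields uniform control of order $O(k)$ over a ball of radius $(M_\epsilon+1)\epsilon_n(k_n)$ with arbitrarily high probability. For the second part of \textbf{A5}, \eqref{eq: sieve_prior} gives a lower bound of the form $c^k \exp(-C k)$ on the reference prior ball and an upper bound involving $\delta_{n,k}^k$ on the small ball after translating $h_n$ to Euclidean distance via \eqref{assump: reg_design_mtx}.

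The main obstacle will be handling the nonlinearity introduced by the logistic link inside \textbf{A3} and the empirical process part of \textbf{A5}. Once the sieves $\Theta_n(k)$ are chosen so that all relevant functions have bounded $\|\cdot\|_\infty$-norm up to logarithmic factors, the derivatives $\mu',\mu''$ contribute only mild multiplicative constants, and the verification reduces, up to these constants, to the regression calculations in Section \ref{sec: proof_reg} of the supplement. Feeding the verified assumptions into Theorems \ref{th:coverage} and \ref{th:EmpBayesCoverage} gives the coverage statement with $L_{\eps,\a}\sqrt{\log n}$ inflation, while Corollary \ref{cor:radius} combined with the bound $\epsilon_n(k_n) \lesssim (n/\log n)^{-\beta/(1+2\beta)}$ on $\theta_0 \in \mathcal S^\beta(L_0)$ delivers the diameter bound uniformly in $\beta \geq \beta_0$, completing the proof.
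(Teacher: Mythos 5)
Your overall architecture (verify \textbf{A0}--\textbf{A5}, reduce to Euclidean balls via \eqref{assump: reg_design_mtx} and the local equivalence $h_n\asymp d_n$, then invoke Theorems \ref{th:coverage}, \ref{th:EmpBayesCoverage} and Corollary \ref{cor:radius}) matches the paper, but there is a genuine gap in your treatment of \textbf{A3} and \textbf{A5} that comes from choosing the wrong projection $\theta_{[k]}^o$. You take $\theta_{[k]}^o$ to be the $d_n$-projection of $f_0$ onto $\{\Phi_k\theta\}$, and claim the cross term in $E_{\theta_0}^{(n)}[\ell_n(\theta_{[k]}^o)-\ell_n(\theta)]$ ``vanishes up to negligible nonlinear remainders'' by $d_n$-orthogonality. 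That is not so: after a Taylor expansion the leading cross term is
\[
\sum_{i=1}^n (q_{0,i}-q_i^o)\,(\theta_{[k]}^o-\theta)^T\Phi_k(x_i),
\]
and writing $q_{0,i}-q_i^o=\mu'(\xi_i)(f_0(x_i)-f_{\theta_{[k]}^o}(x_i))$ shows that $d_n$-orthogonality only annihilates this sum if the weights $\mu'(\xi_i)$ were constant in $i$, which they are not. The remainder you dismiss is of order $\sqrt{n b(k)}\,\|\theta-\theta_{[k]}^o\|_2 \asymp \sqrt{n k_n}\,k_n^{-\beta}$, and with $k_n\asymp (n/\log n)^{1/(1+2\beta)}$ this exceeds the required $O(k)$ budget of \textbf{A3} by a factor of $\sqrt{\log n}$. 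The same obstruction reappears in the drift term of your empirical-process control for \textbf{A5}. The paper avoids this entirely by defining $\theta_{[k]}^o$ to be the Kullback--Leibler projection of $\theta_0$ onto $\Theta(k)$, for which Lemma \ref{lem:help_class} provides the exact identity $\sum_i(q_{0,i}-q_i^o)\phi_j(x_i)=0$ for all $j\leq k$; this is precisely the orthogonality the Bernoulli likelihood needs, and it kills the cross terms $\mathcal B+\mathcal D$ in \eqref{eq: likeratio_help} exactly. The rest of \textbf{A5} is then handled by a direct six-term decomposition, Taylor expansion and Cauchy--Schwarz/Markov bounds rather than chaining.

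Two further, smaller deviations: your sieve $\Theta_n(k)=\{\|\theta\|_2\leq(\log n)^a\}$ does not match the paper's $\{\|\theta\|_2\leq C_1\sqrt{k}(n\eps_n^2)^{1/q}\}$, and a polylogarithmic radius will not deliver the remaining-mass bound in \textbf{A1} (the tail of the sieve prior needs the larger radius, cf.\ Lemma \ref{lem: prior small}). Also, uniform sup-norm control of $f_\theta$ over the whole sieve is neither achievable nor needed; the paper proves $\|f_\theta\|_\infty=O(1)$ only locally, for $\theta$ in a Hellinger neighbourhood of $\theta_0$ and $k$ of moderate size (Lemma \ref{lem: class_Linf}), which is exactly the set where the metric equivalences and Taylor expansions are invoked.
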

The proof of the proposition is deferred to Section \ref{sec: proof_class} of  the supplementary material \cite{rousseau:szabo:16:supp}.

 \begin{remark} We note that the empirical Hellinger semi-metric $h_n$ is locally equivalent to the empirical $L_2$ semi-metric $d_n(f_{\theta_1},f_{\theta_2})$ and the $\ell_2$-norm $\|\theta_1-\theta_2\|_2$, see assertions \eqref{eq: equiv:metric} and \eqref{eq: help1_classification} in the Supplementary material \cite{rousseau:szabo:16:supp}. Therefore the same coverage and contraction rate results can be shown for these semi-metrics as well.
 \end{remark}
 
 \subsection{Parametric models and the BIC formula} \label{parametric} 
 
 Interestingly the lower bound obtained in Propositions \ref{prop: UB:radius} and \ref{prop: UB:radius:HB} also hold for simple regular parametric models. Consider the same structure as model  \eqref{def: model} with $k \leq K <+\infty$, and $\Theta(k+1) \subset \Theta(k)$.  
Now assume that there exist $k_1< k_0\leq K$ such that $\theta_0 \in \Theta(k_0)$ and $\inf_{\theta\in\Theta(k_1)} \|\theta_0 - \theta\|_2 = \delta \sqrt{(\log n)/n} $  and  $\inf_{\theta\in\Theta(k_1-1)} \|\theta_0 - \theta\|_2 \geq C \sqrt{(\log n)/n} $, for some small $\delta >0$ and large $C>0$. In other words $\theta_0$ is close to $\Theta(k_1) $ but not close to any $\Theta(k)$ with $k <k_1$.  Then under usual regularity assumptions, see Section \ref{pr:LB:para} of the supplement,   for any $\alpha \in (0,1)$,  
\begin{equation}\label{LB:para}
\pi(k \leq k_1|\mathbf Y) = 1 + o_{P_{\theta_0}}(1), \quad 
P_{\theta_0}( \theta_0 \in \hat C( L_n,\alpha) ) = o(1) , \quad \forall L_n = o(\sqrt{\log n}), 
\end{equation} 
where $\hat C(1,\alpha) = \{ \|\theta - \hat \theta \|_2 \leq r_\alpha\} $ is the $\alpha$ credible ball centered at the posterior mean $\hat \theta$. 

In other words, because of the $\log n $ penalization induced by the integration over the parameter spaces $\Theta(k)$, signals of order $\sqrt{(\log n)/n}$ may  be estimated at 0 and the posterior concentrates on a smaller dimensional parameter set, thus underestimating the uncertainty.

\section{Discussion}\label{sec:disc}

In this paper we have provided some general tools to study the frequentist properties of inflated credible balls in infinite dimensional models based on sieve priors. We have also studied three types of models: regression, density estimation and classification. As we can see from our results a key condition for the good behaviour of these inflated balls is the fact that the posterior distribution concentrates on the values of $k$ for which $b(k) \asymp k (\log n)/n$ and this is verified under the generalized polished tail condition, together with some other technical conditions. An intriguing feature of our result is the fact that we had to inflate the credible balls by a factor of order $\sqrt{\log n}$. In the case of the regression model,  under both  the empirical and hierarchical Bayes posteriors we have shown that this inflation is necessary, in order to obtain good frequentist coverage. The reason behind it is that the marginal maximum likelihood estimator $\hat k_n$ corresponds to a value $k$ such that the bias $b(k) \asymp k (\log n)/n$, while the estimation error (and thus the radius $r_\alpha^2$ ) is of order $k/n$. We believe that this (negative) result remains valid for the other models (density estimation and classification). 

We believe that this is in fact an important take away message from our results, i.e. the model selection priors, induce a penalization of order $d_k \log n $, where $d_k $ is the dimension of the parameter space reminiscent of the BIC formula, which in turn induce a loss in uniform coverage. This is even still true in simple regular parametric models, as discussed in Section \ref{parametric}.

 From a practical perspective, these credible balls can be approximately visualized by plotting the curves under the posterior distribution which satisfy the constraint $d(\theta, \hat  \theta_n) \leq L \sqrt{\log n} r_\alpha$, as was done for instance  in \cite{ray2017} and in \cite{szabo:vdv:vzanten:13}. In general visualization of confidence sets outside of the $L_{\infty}$ or point-wise case is challenging and we are not aware of any practical solution for instance for $L_2$- or Hellinger-confidence balls.

 The paper focuses on priors based on the structure \eqref{def: model}. This represents a general family of prior models but of course does not cover every possible prior. In particular hierarchical priors based on a continuous hyperparameter, such as hierarchical Gaussian processes, are not tackled by the present approach. There is so far no general theory for such priors and the only existing results so far are based of particular models and particular priors for which explicit computations can be derived, as in \cite{szabo:vdv:vzanten:13}.

\section{Proof of Theorem \ref{th:coverage}}\label{sec:proof:covHB}

Theorem \ref{th:coverage} is a simple consequence of the following lemma which allows to control the prior mass of neighbourhoods of $\hat \theta_n$.
\begin{lemma}\label{th:prior}
Under the same assumptions as in Theorem  \ref{th:coverage} for every $\eps>0$ there exists a small enough $\delta_\eps>0$ such that for $\rho_n = \delta_{\eps}/\sqrt{\log n}$
 \begin{equation*}
 \sup_{\theta_0\in \Theta_0} E_{\theta_0}^{(n)}\left( \pi(d(\theta, \hat \theta_n)\leq \rho_n  \epsilon_n(k_n)|\mathbf Y ) \right) \leq \eps.
 \end{equation*}
\end{lemma}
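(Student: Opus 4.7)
The plan is to bound the posterior mass of the small ball $B^\ast=\{\theta:d(\theta,\hat\theta_n)\le \rho_n\epsilon_n(k_n)\}$ by combining the localisation of the posterior on $\mathcal K_n(M)$ with assumption \textbf{A5}, which was designed precisely to compare the prior mass of small balls around a point near $\theta_0$ with the reference mass $\pi_{|k}(B_k(\theta_{[k]}^o,\sqrt{k/n},d))$. First I would decompose
$$\pi(B^\ast\mid\mathbf Y)\le \pi_k\bigl(k\notin\mathcal K_n(M)\mid \mathbf Y\bigr)+\max_{k\in\mathcal K_n(M)}\pi_{|k}(B^\ast\mid \mathbf Y),$$
so that by Lemma \ref{lem:post:Kn} the first term has $P_{\theta_0}^{(n)}$-expectation at most $\epsilon$ for $M$ large. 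This reduces matters to a uniform bound on the conditional posterior mass over $k\in\mathcal K_n(M)$.

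For each such $k$ I would write the conditional posterior as a ratio centred at $\theta_{[k]}^o$,
$$\pi_{|k}(B^\ast\mid\mathbf Y)=\frac{\int_{B^\ast} e^{\ell_n(\theta)-\ell_n(\theta_{[k]}^o)}\pi_{|k}(d\theta)}{\int e^{\ell_n(\theta)-\ell_n(\theta_{[k]}^o)}\pi_{|k}(d\theta)},$$
which eliminates the misspecification factor from the ratio itself. On the event of probability at least $1-2\epsilon$ on which \textbf{A0} and the first half of \textbf{A5} both hold, any $\theta\in B^\ast\cap\Theta_n(k)$ satisfies $d(\theta_0,\theta)\le(M_\epsilon+1)\epsilon_n(k_n)$ (provided $\rho_n\le 1$), hence $\ell_n(\theta)-\ell_n(\theta_{[k]}^o)\le B_\epsilon k$. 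Picking any $\theta_\ast\in B^\ast\cap\Theta_n(k)$ (otherwise the numerator vanishes) and using $B^\ast\subset B_k(\theta_\ast,2\rho_n\epsilon_n(k_n),d)$, the second half of \textbf{A5} applied at $\theta_\ast$ with $\delta_{n,k}=2\rho_n\epsilon_n(k_n)\sqrt{n/k}$ yields
$$\int_{B^\ast\cap\Theta_n(k)}e^{\ell_n-\ell_n(\theta_{[k]}^o)}\pi_{|k}(d\theta)\le c_{10}\,e^{(B_\epsilon+c_9\log\delta_{n,k})k}\,\pi_{|k}\bigl(B_k(\theta_{[k]}^o,\sqrt{k/n},d)\bigr).$$
By Lemma \ref{rem:Kn}, $k\asymp k_n$ on $\mathcal K_n(M)$, so $\delta_{n,k}\asymp\rho_n\sqrt{\log n}=\delta_\epsilon$, which can be made arbitrarily small.

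For the denominator I would restrict integration to $B_k^o\cap\Theta_n(k)$ with $B_k^o=B_k(\theta_{[k]}^o,\sqrt{k/n},d)$ and apply Jensen's inequality, turning the logarithm of the integral into a prior-weighted average of $\ell_n(\theta)-\ell_n(\theta_{[k]}^o)$. Assumption \textbf{A3} bounds the $P_{\theta_0}^{(n)}$-expectation of $\ell_n(\theta_{[k]}^o)-\ell_n(\theta)$ by $c_7 k$ and its $r$-th centred moment by $c_8 k^{r/2}$ on $B_k^o\cap\Theta_n(k)$; averaging over the prior by Fubini and applying a Chebyshev union bound over the at most $2M^2k_n$ values in $\mathcal K_n(M)$ gives, on an event of probability at least $1-\epsilon$,
$$\int_{B_k^o\cap\Theta_n(k)}e^{\ell_n-\ell_n(\theta_{[k]}^o)}\pi_{|k}(d\theta)\ge \tfrac12\,\pi_{|k}\bigl(B_k(\theta_{[k]}^o,\sqrt{k/n},d)\bigr)\,e^{-C_\epsilon k},$$
where the factor $1/2$ absorbs $\pi_{|k}(\Theta_n(k)^c)$, which is negligible relative to $\pi_{|k}(B_k^o)$ by \textbf{A1}. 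The reference prior mass then cancels in the ratio and I obtain
$$\pi_{|k}(B^\ast\mid\mathbf Y)\lesssim \exp\bigl\{k\bigl[B_\epsilon+C_\epsilon+c_9\log(C_M\delta_\epsilon)\bigr]\bigr\}+o(1),$$
which is arbitrarily small uniformly in $k\in\mathcal K_n(M)$ for $\delta_\epsilon$ small, since $c_9>0$, $k\ge ck_n\to\infty$, and $\log\delta_\epsilon\to-\infty$.

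The main obstacle will be the residual integral over $\Theta_n(k)^c$ in the numerator, since $e^{\ell_n(\theta)-\ell_n(\theta_{[k]}^o)}$ is not a bona fide likelihood ratio when $\theta_0\ne\theta_{[k]}^o$ and its $P_{\theta_0}^{(n)}$-expectation cannot be taken trivially. I would handle this by writing $e^{\ell_n-\ell_n(\theta_{[k]}^o)}=e^{\ell_n-\ell_n(\theta_0)}\cdot e^{\ell_n(\theta_0)-\ell_n(\theta_{[k]}^o)}$, controlling the second factor with high probability via Chebyshev since $n\,\KL(\theta_0,\theta_{[k]}^o)\lesssim nb(k)\lesssim k_n\log n$ on $\mathcal K_n(M)$, and then using $E_{\theta_0}^{(n)}[\int_{\Theta_n(k)^c}e^{\ell_n-\ell_n(\theta_0)}\pi_{|k}]=\pi_{|k}(\Theta_n(k)^c)$, which is exponentially small by the remaining-mass part of \textbf{A1}. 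The exponent $c_2+c_3+c_4+2$ in \textbf{A1} is designed precisely so that even after inflation by $e^{\ell_n(\theta_0)-\ell_n(\theta_{[k]}^o)}$ and division by the evidence lower bound from \textbf{H} and \textbf{A1}, the residual stays negligible, leaving the explicit \textbf{A5}-based bound as the dominant contribution.
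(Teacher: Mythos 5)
Your argument is essentially the paper's own proof: after localizing to $\mathcal K_n(M)$ via Lemma \ref{lem:post:Kn}, you bound the within-model posterior mass of the small ball by the same ratio — numerator controlled by \textbf{A0} together with the first part of \textbf{A5} (the $B_\eps k$ log-likelihood bound) and the second part of \textbf{A5} with $\delta_{n,k}\asymp\delta_\eps$ (Lemma \ref{rem:Kn} relating $k$ and $k_n$), denominator lower bounded on $B_k(\theta_{[k]}^o,\sqrt{k/n},d)$ via \textbf{A3} and the Jensen--Chebyshev argument (which is exactly Lemma 10 of \cite{ghosal:vdv:07}, invoked by the paper), uniformly over $\mathcal K_n(M)$ — so the reference prior mass cancels and the term $c_9 k\log\delta_\eps$ makes the bound arbitrarily small. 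The only place you deviate is the residual over $\Theta_n(k)^c$: your proposed Chebyshev control of $\ell_n(\theta_0)-\ell_n(\theta_{[k]}^o)$ rests on $KL(\theta_0,\theta_{[k]}^o)\lesssim b(k)$, which the abstract assumptions do not supply; this is immaterial, however, since (as in the paper, whose proof only bounds the $\Theta_n$-restricted mass) the complement is disposed of by the standard remaining-mass argument combining the second part of \textbf{A1}, \textbf{H} and the evidence lower bound at $k_n$ relative to $\theta_0$.
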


The proof of Lemma \ref{th:prior} is presented in Section \ref{sec:pr:thprior}.
 We now give the proof of Theorem \ref{th:coverage}.


\begin{proof}[Proof of Theorem \ref{th:coverage}]
Let $L_n = L_{\eps,\a} \sqrt{\log n}$ (for some 
$L_{\eps,\a}>0$ to be specified later) and $\epsilon_n = \epsilon_n(k_n)$. Then by assumption \eqref{freq} and definition \eqref{def: HBcred} we have for every $\eps>0$ that
\begin{equation*}
\begin{split}
P_{\theta_0}^{(n)} \left( \theta_0\in  \widehat C(L_n, \a) \right)  & = P_{\theta_0}^{(n)} \left( d(\theta_0,\hat \theta_n) \leq  L_nr_\a \right) \\
&\geq P_{\theta_0}^{(n)} \left[ \pi\left(  d(\theta,\hat \theta_n) \leq  d(\theta_0,\hat \theta_n)/L_n  \big| \mathbf Y \right) \leq 1 -\alpha \right]\\
& \geq P_{\theta_0}^{(n)}\Big[ \pi\Big(\theta:\, d(\theta,\hat\theta_n)\leq M_\eps \eps_n/L_n| \mathbf Y\Big)\leq1-\alpha \Big]-\eps.  
\end{split}
\end{equation*}
We show below that the first term on the right hand side is bounded from below by $1-\eps$.
In view of Lemma \ref{th:prior} there exists $\delta_{\eps,\a} >0$ small enough such that 
$$ \sup_{\theta_0\in \Theta_0}  E_{\theta_0}^{(n)}\left( \pi\big( d(\theta, \hat \theta_n) \leq \delta_{\eps,\a} \epsilon_n/\sqrt{\log n} | \mathbf Y \big) \right) \leq \eps(1-\a),$$
and therefore by taking $L_{\eps,\a}=M_{\eps}/\delta_{\eps,\a}$  and applying  Markov's inequality
\begin{align*}
P_{\theta_0}^{(n)}\Big[ \pi\Big( d(\theta,\hat\theta_n)\leq \frac{M_\eps \eps_n}{L_n}|\mathbf Y\Big)>1-\alpha \Big]\leq \frac{E_{\theta_0}^{(n)}\left( \pi\big( d(\theta, \hat \theta_n) \leq \frac{\delta_{\eps,\a} \epsilon_n}{\sqrt{\log n}} | \mathbf Y \big) \right)}{1-\alpha}\leq\eps,
\end{align*}
finishing the proof of our statement.
\end{proof}

\subsection{Proof of Lemma \ref{th:prior}} \label{sec:pr:thprior}
For notational convenience let $\epsilon_n =\epsilon_n(k_n)$ and $\Theta_n=\cup_k\Theta_n(k)$. Then in view of Lemma \ref{lem:post:Kn}, for large enough choice of $M>0$

  \begin{equation*}
  \begin{split}
  &E_{\theta_0}^{(n)} \pi\left(d(\theta, \hat \theta_n)\leq  \rho_n\eps_n |\mathbf Y \right)\\
&\quad\leq E_{\theta_0}^{(n)}\Big( \sum_{k\in \mathcal K_n(M)}   \pi_{|k}\big( \{d(\theta, \hat \theta_n)\leq \rho_n\eps_n \}\cap \Theta_n(k) |\mathbf Y \big)\pi_k(k|\mathbf Y)\Big)\\
&\quad\quad +E_{\theta_0}^{(n)}\Big( \sum_{k\in \mathcal K_n(M)}   \pi_{|k}\big(  \Theta_n(k)^c |\mathbf Y \big)\pi_k(k|\mathbf Y)\Big)+\eps
   \end{split}
  \end{equation*}
for all $\theta_0\in\Theta_0$.  Let $\Omega_{n,0}= \left\{m_n(k_n)>  e^{- (c_{3}+c_4+1) n\eps_n^2} \right\}$, with $m_n(k)  = \int_{\Theta(k)} e^{\ell_n(\theta) - \ell_n(\theta_0) }\pi_{|k}(d\theta)$. In view of Lemma 10 of \cite{ghosal:vdv:07} (with $n\eps^2=n\eps_n^2(k_n)\leq 2 k_n\log n$, $k=r$ and $C=1$) we get following \textbf{A1} that $P_{\theta_0}^{(n)}(\Omega_{n,0}^c)\leq (k_n\log n)^{-r/2} =o(1)$. Furthermore, note that for $k\in\mathcal{K}_n(M)$ we have by assumption \textbf{A2} (i) that
\begin{align*}
E_{\theta_0}^{(n)}\pi_{|k}(\Theta_n(k)^c|\textbf{Y})=\pi_{|k}(\Theta_n(k)^c)\lesssim e^{-(c_2+c_3+c_4+2)n\eps_n^2}.
\end{align*}
By combining the above inequalities and in view of Lemma \ref{rem:Kn} we get that
\begin{align*}
E_{\theta_0}^{(n)}\Big( \sum_{k\in \mathcal K_n(M)}   \pi_{|k}\big(  \Theta_n(k)^c |\mathbf Y \big)\pi_k(k|\mathbf Y)\Big)\lesssim k_n e^{-n\eps_n^2}=o(1). 
\end{align*}

Next we show that with probability at least $1-\tilde{C}\eps$, for some universal $\tilde{C}>0$, we have for every $k\in\mathcal{K}_n(M)$
\begin{align}
\pi_{n,k} := \pi_{|k}\left( \{d(\theta, \hat \theta_n)\leq  \rho_n\eps_n  \}\cap \Theta_n(k) |\mathbf Y \right)\leq \eps .\label{def:hyper_post_prob}
\end{align}
Then the statement of the lemma follows by noting that
$$ E_{\theta_0}^{(n)} \pi\left( d(\theta, \hat \theta_n)\leq \rho_n\eps_n  |\mathbf Y \right)\leq (\tilde{C}+1+o(1))\eps+ \sum_{k\in\mathcal{K}_n(M)}\eps \pi_k(k|Y)\leq (\tilde{C}+3)\eps.$$

It remained to prove \eqref{def:hyper_post_prob}. As a first step we introduce the notations, for $C,B>0$
\begin{align}  
\Omega_{n}( C ) =\left\{\max_{k\in\mathcal{K}_n(M)} e^{ C k }\frac{\int_{\Theta(k)} e^{\ell_n(\theta) - \ell_n(\theta^o_{[k]})}\pi_{|k}(d\theta)}{\pi_{|k}\left( d(\theta, \theta^o_{[k]})^2 \leq  k/n\right)}\geq 1\right\},\\
\Gamma_n(B) = \{\max_{k\in \mathcal{K}_n(M)}\sup_{\Theta_n(k) \cap B_k(\hat \theta_{n},  \rho_n\eps_n,d ) }\big(\ell_n(\theta) - \ell_n(\theta_{[k]}^o )-B k\big)<0\}.
\end{align}
Using assumption \textbf{A0} we have with probability greater than $1-\epsilon$, $d(\hat \theta_n, \theta_0)\leq M_\epsilon \epsilon_n$, therefore as soon as $\rho_n \leq 1$, 
$$B_k(\hat \theta_{n},  \rho_n\eps_n,d ) \subset B_k(\theta_0,  (M_\epsilon + 1)\eps_n,d ).$$
Hence in view of assumption  \textbf{A4} (ii) there exists a large enough constant $B_{\eps}>0$ such that 
$\inf_{\theta_0 \in \Theta_0} P_{\theta_0}^{(n)}(\Gamma_n(B_{\eps}) ) \geq 1 - \epsilon$.
  Also note that following from \textbf{A4} (i) and by using the standard technique for lower bound for the likelihood ratio (e.g. Lemma 10 of \cite{ghosal:vdv:07} with $1+C=c_7+1/\sqrt{\eps}$ and $n\eps^2=k$) we have, for any $k\in\mathcal{K}_n(M)$,  with $P_{\theta_0}^{(n)}$-probability bounded from below by $1 - (\epsilon/k)^{r/2}\geq 1-\eps/k$ that
\begin{align}
\int_{\Theta(k)} e^{\ell_n(\theta) - \ell_n(\theta^o_{[k_n]})}\pi_{|k}(d\theta) &\geq e^{-(c_7+1/\sqrt{\epsilon})k}\pi_{|k}\big(S_n(k,c_{7},c_8,r)\big)\nonumber\\
&\geq e^{-(c_7+1/\sqrt{\epsilon}) k }\pi_{|k}\big(B_k(\theta^o_{[k]}, \sqrt{k/n},d)\big),\label{eq: hulp1}
  \end{align}
hence in view of Lemma \ref{rem:Kn}, $P_{\theta_0}^{(n)}\big( \Omega_{n}^c( c_{7}+1/\sqrt{\epsilon} )\big)\leq C\epsilon$. 

Then  we have, on $ \Omega_{n}(c_7+1/\sqrt{\epsilon})\cap \Gamma_n(B_{\eps})$, that for any $k\in\mathcal{K}_n(M)$
\begin{align*}
\pi_{n,k} &\leq  e^{(c_{7}+B_{\eps}+1/\sqrt{\epsilon})k} \frac{ \pi_{|k}\left( \Theta_{n}(k)\cap\{d(\theta, \hat \theta_{n})\leq  \rho_n \epsilon_n  \}\right) }{ \pi_{|k}\left(d(\theta, \theta^o_{[k]}) \leq  \sqrt{k/n}\right)}.
\end{align*}
We recall that Lemma \ref{rem:Kn} implies that $k_n\leq C k$ for all $k\in \mathcal K_n(M)$ and by definition of $\epsilon_n(k_n)$, $n\epsilon_n^2 \leq 2 k_n\log n$. Therefore we have $ \rho_n\epsilon_n \leq \delta_{\eps}\sqrt{2k_n/n}\leq C^{1/2} \delta_{\eps}\sqrt{k/n}$ for all $k\in \mathcal K_n(M)$. In view of assumption  \textbf{A4} (iii) (with $\delta_{n,k}=C^{1/2}\delta_{\eps}$ )
\begin{align}
\pi_{n,k}\lesssim  e^{(c_{7}+B_{\eps}+1/\sqrt{\epsilon}+c_{9}\log(C^{1/2} \delta_{\eps}) )  k}\leq   \eps,\label{eq: UB_hyper_post}
\end{align}
for small enough choice of $\delta_{\eps}>0$ (the choice $\log(\delta_{\eps}) \leq -c_9^{-1}(1/\sqrt{\epsilon} + c_7 + B_\epsilon +\log \epsilon^{-1})-\log C^{1/2} $ is sufficiently small).


 \section*{Acknowledgements}
The authors would like to thank the Associate Editor and the Referees for their useful comments which lead to an improved version of the manuscript.
This work has been partially funded by the Chaire Havas.

\bibliographystyle{apalike}
\bibliography{biblio}

\appendix
\section{Proofs of the Propositions }
 In this section we prove the propositions of Section  3 of  \cite{rousseau:szabo:2015:main}. Before giving the proofs we introduce some additional notations. Throughout the supplementary material, \textbf{A0}-\textbf{A4} denote assumptions \textbf{A0}-\textbf{A4} of  \cite{rousseau:szabo:16:main}. Furthermore we use the abbreviation $\eps_n=\eps_n(k_n)$ in the whole manuscript.  Along the lines we also use the notation $\Phi_k(x_i)=\big(\phi_1(x_i),..,\phi_k(x_i)\big)$ and denote by  $c$ and $C$ global constants whose value may change one line to another.

\subsection{Proof of Proposition \ref{prop: reg}}\label{sec: proof_reg} 
The first assertion is a direct consequence of Theorem \ref{th:coverage} and Theorem \ref{th:EmpBayesCoverage}, hence it is sufficient to verify the corresponding conditions. As a first step we note that the models $\Theta(k)$ are nested.  Also note that for $\theta,\theta'\in\mathbb{R}^k$ we have that 
$$d_n^2(\theta,\theta')=  (\theta-\theta')^T[\frac{1}{n}\Phi_k^T\Phi_k](\theta-\theta'),$$
and therefore in view of assumption \eqref{assump: reg_design_mtx} for all $k\leq K_n$
\begin{align}
C_0^{-1}\|\theta-\theta'\|_2^2\leq d_n(\theta,\theta')^2\leq C_0\|\theta-\theta'\|_2^2.\label{eq: norms}
\end{align}


First we consider condition \textbf{A1}. By easy and standard calculations we get
\begin{align*}
&2 KL(\theta_0,\theta)=V(\theta_0,\theta)= d_n^2(\theta_0,\theta), 
\end{align*}
see for instance \cite{ghosal:vdv:07}, hence by the definition of $\theta_{[k]}^o$ and assertion \eqref{eq: norms}, for every $k_n\leq K_n$
\begin{align*}
&\{KL(\theta_0,\theta)\leq \eps_n^2/2,V(\theta_0,\theta)\leq \eps_n^2  \}=B_{k_n}(\theta_0,\eps_n,d_n)\\
&\qquad\supset B_{k_n}(\theta_{[k_n]}^o,k_n(\log n)/n,d_n)\supset  B_{k_n}(\theta_{[k_n]}^o,C_0^{-1}k_n(\log n)/n,\|.\|_2).
\end{align*}
Condition \textbf{A1} then follows by Lemma \ref{lem: prior small} with $\tilde\theta=\theta_{[k_n]}^o$.

Next, let us define the sieve $\Theta_n(k)$ as
\begin{align*}
\Theta_n(k)\equiv\{\theta\in\mathbb{R}^k:\, \|\theta\|_2\leq C_1\sqrt{k}(n\eps_n^2)^{1/q}\},
\end{align*}
for some sufficiently large constant $C_1>0$ and $q$ given in \eqref{eq: sieve_prior}.
Then assumption \textbf{A2} (i) follows from the second assertion of Lemma \ref{lem: prior small}.
Condition \textbf{A2} (ii) is verified in Corollary 2 on page 149 of \cite{birge:1983}.  For assumption \textbf{A2} (iii) we note that following from \eqref{eq: norms}
for every $0<c_{6}<1$, $c_{5}>0$ and $u^2\geq 2(1/2+ 1/q)k\log n/(c_5n)$, $k\leq K_n$
\begin{align*}
\log N\big(c_{6} u,\Theta_n(k),d_n(.,.)\big)&\leq \log N\big(c_{6} u/C_0,\Theta_n(k),\|.\|_2\big)\\
&\leq k (1/2+ 1/q)\log n  \leq  c_{5} u^2n/2.
\end{align*}

As a next step we prove that condition \textbf{A3} holds. First note that since $g$ is bounded the density corresponding to $\pi_{|k}$ is also bounded from above by $c_{\max}^k$ for some sufficiently large $c_{\max}>0$. Then in view of \eqref{eq: norms}
\begin{align}
\pi_{|k}\big(B_k(\theta_0,J_1\sqrt{k(\log n)/n},d)\big)&\leq 
\pi_{|k}\big(B_k(\theta_{[k]}^o,2J_1\sqrt{k(\log n)/n},d)\big))\nonumber\\
&\leq \pi_{|k}\big(B_k(\theta_{[k]}^o,2J_1C_0\sqrt{k(\log n)/n},\|.\|_2)\big)\nonumber\\
&\leq c_{\max}^k \mbox{Vol}\big( B_k(\theta_{[k]}^o,2J_1C_0\sqrt{k(\log n)/n},\|.\|_2)\big).\label{eq: condA4:reg}
\end{align}
Then using the formula $\mbox{Vol}( B_k(0,r,\|.\|_2))=\pi^{k/2}r^k/ \Gamma(k/2+1)$ for the volume of a $k$-dimensional $\ell_2$-ball we get that the right hand side of the preceding display is bounded from above by $(Ck(\log n)/n)^{k/2}\leq \exp\{-Ck\log n \}\leq \exp\{-CM_0k_n\log n \}$, hence we get the condition for large enough choice of $M_0>0$.

Then we show that assumption \textbf{A4} (i) also holds. For every $\theta\in\Theta(k)$
\begin{align*}
\|\mathbf{Y}-\Phi_k\theta\|_2^2-\|\mathbf{Y}-\Phi_k\theta_{[k]}^o\|_2^2
=\|\Phi_k\theta_{[k]}^o-\Phi_k\theta\|_2^2+2\big(\mathbf{Y}-\Phi_k\theta_{[k]}^o\big)^{T}\big(\Phi_{k}(\theta_{[k]}^o-\theta)\big).
\end{align*}
Besides, $\mathbf{Y}=f_{0,n}+\mathbf{Z}$, where $\mathbf{Z}=(Z_1,Z_2,...,Z_n)^T$ and $f_{0,n}=(f_0(x_1),...,f_0(x_n))^T$, and hence the $P_{\theta_0}^{(n)}$-expected value of the second term on the right hand is zero following from the orthonormality of the $d_n(.,.)$-projection of $f_0$ into the sub-space $\{\Phi_k\theta:\, \theta\in\Theta(k)\}$ and $E_{\theta_0}^{(n)}\mathbf{Z}=0$. Therefore
\begin{align*}
E_{\theta_0}^{(n)}\log\frac{p_{\theta_{[k]}^o}^{(n)}}{p_{\theta}^{(n)}}=nd_n^2(\theta_{[k]}^o,\theta)/2.
\end{align*}
Similarly to the preceding display we can also show that
$ V_{\theta_0}^{(n)}\big(\log\frac{p_{\theta_{[k]}^o}^{(n)}}{p_{\theta}^{(n)}}\big)=d_n^2(\theta_{[k]}^o,\theta), $
resulting in $B_k(\theta_{[k]}^o,\sqrt{k/n},d_n)= \mathcal{S}_n(k,2,1,2)$.

Next we deal with condition \textbf{A4} (ii). Note that for all  $\theta\in\Theta(k)$, by writing $f_{\theta,n} =( f_\theta(x_1), ..., f_\theta(x_n) )^T$, we get by Cauchy-Schwarz
 \begin{equation}
 \begin{split}
\ell_n(\theta)-\ell_n(\theta_{[k]}^o) &= - \frac{ n d_n(\theta_{[k]}^o,\theta)^2 }{ 2 } +  \mathbf Z^{T}(f_{\theta,n}-f_{\theta_{[k]}^o,n}) \\
&\leq - \frac{ n d_n(\theta_{[k]}^o,\theta)^2 }{ 2 } + \| \mathbf Z^T \Phi_k\|_2 \|  \theta_{[k]}^o - \theta\|_2. ,
\label{eq: UB:loglikeratio}
\end{split}
\end{equation}
Given that $ \| \mathbf Z^T \Phi_k\|_2 $ is increasing in $k$, in view of Lemma \ref{rem:Kn} of \cite{rousseau:szabo:16:main}.
$$\max_{ k \in \mathcal K_n(M) } \|  \mathbf Z^T \Phi_k\|_2 = \|  \mathbf Z^T \Phi_{2M^2k_n}\|_2 \leq C'\sqrt{k_n n}, $$ for some large enough $C'>0$ with probability going to 1. Since 
$ \|  \theta_{[k]}^o - \theta\|_2 \leq C_0d_n(\theta_{[k]}^o,\theta)$, we obtain that with probability tending to 1, uniformly over $k\in \mathcal K_n(M)$,
$$
\ell_n(\theta) - \ell_n(\theta_{[k]}^o) \leq \sqrt{n} d_n(\theta_{[k]}^o,\theta) \left( C'C_0\sqrt{k_n} - \sqrt{n} d_n(\theta_{[k]}^o,\theta)/2\right) \leq 2C_0^2C'^2k_n,$$
terminating the proof of condition \textbf{A4} (ii).

Finally, to prove condition \textbf{A4} (iii), we note that following from \eqref{eq: norms}, for every $\theta\in\Theta(k)$, $k\leq K_n$
\begin{align}
\frac{\pi_{|k}\big( B_k( \theta,\delta_{n,k} \sqrt{k/n},d_n)\big)}{\pi_{|k}\big(B_k(\theta_{[k]}^o,\sqrt{k/n},d_n) \big)}
&\leq \frac{\pi_{|k}\big( B_k(\theta,C_0 \delta_{n,k} \sqrt{k/n},\|.\|_2) \big)}{ \pi_{|k}\big( B_k(\theta_{[k]}^o,C_0^{-1} \sqrt{k/n},\|.\|_2)\big)}.\label{eq: regr_A5}
\end{align} 
Since $\|\theta_{[k]}^o \|_2 \leq C_0\|\theta_0\|_2  $, we know that in $B_k(\theta_{[k]}^o,C_0^{-1} \sqrt{k/n},\|.\|_2)$ the prior density is bounded from below by $c_{\min}^{k}$ for some sufficiently small $c_{\min}>0$. Recall also that the prior density is bounded from above by $c_{\max}^k$, for some sufficiently large $c_{\max}>0$, for $\theta\in\Theta(k)$. Hence the right hand side of the preceding display is bounded from above by $(c_{\max}/c_{\min})^k$ times the fraction of the volumes of the $\ell_2$-balls with radius  $C_0\delta_{n,k} \sqrt{k/n}$ and $C_0^{-1} \sqrt{k/n}$, respectively. Using again the formula for the volume we get that the right hand side of the preceding display is bounded from above by $(C_0^2 c_{\max}/c_{\min})^k \delta_{n,k}^{k}$, finishing the proof of our statement.


It remains to deal with the second assertion of the proposition. Let us introduce first the notation $\theta_{0,[k]}=(\theta_{0,1},\theta_{0,2},...,\theta_{0,k})\in\mathbb{R}^k$. Then for $\theta_0\in S^{\beta}(M)$ with $\beta\geq\beta_0>1/2$ we have that $d_n(\theta_0,\theta_{0,[K_n]}) \leq \|\Delta_{K_n}\|_\infty\lesssim \sum_{i=K_n+1}^{\infty}|\theta_{0,i}|\lesssim K_n^{-(\beta-1/2)}$, where $\Delta_k=f_{\theta_0}-\sum_{j=1}^k\theta_{0,j}\phi_j$. Therefore by triangle inequality and assertion \eqref{eq: norms}
\begin{align}
b(k)^{1/2}&\leq d_n(\theta_{0,[k]},\theta_{0,[K_n]})+ d_n(\theta_{0},\theta_{0,[K_n]})\nonumber\\
&\lesssim \big(\sum_{i=k+1}^{K_n}\theta_{0,i}^2\big)^{1/2}+K_n^{-(\beta-1/2)}\lesssim k^{-\beta}+K_n^{-(\beta-1/2)}.\label{eq: UBforBias_reg}
\end{align}
Hence by taking $\bar{k}_n=C(n/\log n)^{1/(1+2\beta)}$ (with large enough constant $C>0$) and in view of assumption $K_n\geq n^{\frac{\beta_0}{(1+2\beta_0)(\beta_0-1/2)}}$ we get that $b(\bar{k}_n)\leq \bar{k}_n\log n/ n$ and as a consequence $k_n\leq \bar{k}_n$.
This leads to $k_n=o( n^{1/(1+2\beta_0)})$ and
\begin{align*}
\eps_n^2\leq 2 k_n(\log n)/n\leq 2 \bar{k}_n(\log n)/n\lesssim  (n/\log n)^{-2\beta/(1+2\beta)},
\end{align*}
finishing the proof of the proposition.

\begin{lemma}\label{lem: prior small}
For every $c>0$, $k\in\{1,...,n\}$,  $\tilde\theta\in\Theta(k) $ satisfying $|\tilde{\theta}_i|\leq C$ for every $i=1,...,k$, 
\begin{align*}
\pi_{|k}( \|\theta-\tilde\theta\|_2\leq c\sqrt{k(\log n)/n})\gtrsim e^{-Ck\log n},\\
\pi_{|k}\big( \|\theta\|_2\geq  C_1\sqrt{k}(n\eps_n^2)^{1/q}\big) \leq e^{-CC_1^qn\eps_n^{2}}.
\end{align*}
\end{lemma}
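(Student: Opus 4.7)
My plan is to prove the two assertions separately using elementary arguments on the product prior $\pi_{|k}(d\theta)=\prod_{i=1}^k g(\theta_i)d\theta_i$ with $g$ satisfying \eqref{eq: sieve_prior}.

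For the first inequality, I would lower bound the $\ell_2$-ball probability by the probability of the hypercube
\[
H = \{\theta\in\mathbb{R}^k:\, |\theta_i-\tilde\theta_i|\leq c\sqrt{\log n/n},\; i=1,\dots,k\},
\]
which is contained in $B_k(\tilde\theta, c\sqrt{k\log n/n}, \|\cdot\|_2)$ since the Euclidean norm is bounded by $\sqrt{k}$ times the sup-norm. On $H$, the hypothesis $|\tilde\theta_i|\leq C$ gives $|\theta_i|\leq C+1$ for $n$ large enough, so the lower bound in \eqref{eq: sieve_prior} yields
\[
\prod_{i=1}^k g(\theta_i) \geq G_1^k e^{-G_2 k (C+1)^q}.
\]
Multiplying by the volume $(2c\sqrt{\log n/n})^k$ of $H$ produces a bound of the form $\exp\{-k[\log(1/(2cG_1))+G_2(C+1)^q+\frac12\log(n/\log n)]\}$, which is $\gtrsim e^{-C'k\log n}$ for some universal $C'>0$.

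For the second inequality, I would use the elementary implication $\|\theta\|_2\geq C_1\sqrt{k}(n\eps_n^2)^{1/q}$ $\Longrightarrow$ $\max_i|\theta_i|\geq C_1(n\eps_n^2)^{1/q}$ together with a union bound:
\[
\pi_{|k}\bigl(\|\theta\|_2\geq C_1\sqrt{k}(n\eps_n^2)^{1/q}\bigr)\leq k\,\pi_g\bigl(|\theta_1|\geq C_1(n\eps_n^2)^{1/q}\bigr)\leq 2kG_3\int_{C_1(n\eps_n^2)^{1/q}}^{\infty}e^{-G_4 x^q}\,dx.
\]
A standard tail bound for $\int_t^\infty e^{-G_4 x^q}dx$ with $t=C_1(n\eps_n^2)^{1/q}\to\infty$ yields (up to polynomial prefactors in $t$) a quantity of order $e^{-G_4 C_1^q n\eps_n^2}$. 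The factor $k\leq n$ is negligible since $n\eps_n^2 \gtrsim k_n\log n\to\infty$, so $k \leq e^{\delta n\eps_n^2}$ for arbitrarily small $\delta>0$. Choosing $C$ slightly smaller than $G_4$ finishes the bound.

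The only mildly delicate point is the tail estimate on the integral, particularly for $q=1$ where there is no additional Gaussian-type decay to absorb the polynomial prefactor; but since we only need an exponent of the form $CC_1^q n\eps_n^2$ with some $C<G_4$, one can bound $\int_t^\infty e^{-G_4 x^q}dx \leq t^{1-q}e^{-G_4 t^q}/(G_4 q)$ for $q\geq 1$, and the $t^{1-q}$ factor is polynomial in $n\eps_n^2$ and therefore easily absorbed. Everything else is a routine combination of a volume lower bound and a coordinatewise union bound, made available by the product structure of the prior.
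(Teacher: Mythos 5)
Your proof is correct and takes essentially the same route as the paper's: a volume-times-pointwise-density lower bound for the small-ball probability (the paper shrinks to a Euclidean ball of radius $cn^{-1/2}$ and controls $\sum_j|\theta_j|^q$ via the triangle inequality and H\"older, while you use an inscribed hypercube with a coordinatewise density bound, a cosmetic difference), and for the tail the same coordinatewise union bound combined with the estimate $\int_t^\infty e^{-G_4x^q}\,dx\lesssim t^{1-q}e^{-G_4t^q}$. The only nitpick is that $k\leq e^{\delta n\eps_n^2}$ for arbitrarily small $\delta$ need not hold when $k_n$ stays bounded (then $n\eps_n^2\asymp\log n$), but since $\log k\leq \log n\leq n\eps_n^2$ the factor $k$ is still absorbed for $C_1$ large, exactly as in the paper's argument.
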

\begin{proof}
The proof of the first and second assertions are basically given for instance as part of the proof of Lemma 3 of \cite{arbel} and the proof of Theorem 2.1 of \cite{rivoirard:rousseau:12}, but for completness we give a sketch of the proof here as well.

First of all note that by assumption \eqref{eq: sieve_prior} we have
\begin{align*}
\pi_{|k}\big(\|\theta-\tilde\theta\|_2^2\leq ck(\log n)/n\big)&=\int_{\|\theta-\tilde\theta\|_2^2\leq ck(\log n)/n}\prod_{j=1}^k g(\theta_j)d\theta\\
&\geq G_1^k \int_{\|\theta-\tilde\theta\|_2\leq cn^{-1/2}} \prod_{j=1}^k e^{-G_2|\theta_j|^{q}}d\theta,
\end{align*}
for any $k\geq 1$. Note that $|\theta_j|^{q}\leq 2^{q} ( |\tilde\theta_j|^{q}+|\tilde\theta_j-\theta_{j}|^q)$. Distingushing the cases $q\geq 2$ and $q<2$ and using H\"older's inequality in the latter, one can easily derive that $\sum_j |\tilde\theta_j-\theta_{j}|^q\leq  k\|\tilde\theta-\theta\|_2^{q}\leq  c^{q}k n^{-q/2} $, see for instance page 28 of \cite{arbel}. 
Also note that the volume of a $k$-dimensional ball with radius $cn^{-1/2}$ is bounded from below by a multiple of $e^{-Ck\log n}$ and $\sum_{j=1}^{k}|\tilde\theta_j|^{q}\leq C^qk$, hence the right hand side of the preceding display is also bounded from below by a multiple of $e^{-Ck\log n}$.

Let $w_n= C_1\sqrt{k}(n\eps_n^2)^{1/q}$. For the second assertion note that for sufficiently large $n$
\begin{align*}
\pi_{|k}(\theta\in\Theta(k):\, \|\theta\|_2\geq w_n)&\leq \sum_{i=1}^{k}\pi_{|k}( \theta_i^2\geq  w_n^2/k)\\
&\leq k \Big(\int_{w_n/\sqrt{k}}^{\infty}+\int_{-\infty}^{-w_n/\sqrt{k}}\Big)g(x)dx\\
&\leq k G_3 \int_{w_n/\sqrt{k}}^{\infty}e^{-G_4 x^q}dx\\
&\leq C e^{-C' (w_n/\sqrt{k})^q},
\end{align*}
where the last inequality follows from
\begin{align*}
\int_{y}^{\infty}e^{-cx^q}dx = q^{-1}\int_{y^q}^{\infty} e^{-cz}z^{\frac{1-q}{q}}dz
\leq (cq)^{-1} e^{-cy^q}y^{1-q}\lesssim  e^{-(c/2)y^q}.
\end{align*}

\end{proof}

\subsection{Proof of Proposition \ref{prop: UB:radius}}\label{sec: UB:radius}
We show below that 
\begin{align}
\mbox{limsup}_n \sqrt{n/\hat{k}_n}\diam \big(\hat{C}_n(1,\alpha),d_n\big)< +\infty ,\quad \text{$P_{f_0}$-almost surely}.\label{eq: UB:radius}
\end{align}
Furthermore we know that in view of assertion \eqref{eq: MMLE} (see the proof of Theorem \ref{th:EmpBayesCoverage}) and Lemma \ref{rem:Kn} of \cite{rousseau:szabo:2015:main} we have that 
$\inf_{\theta_0\in\Theta_0} P_{\theta_0}(C^{-1}k_n\leq \hat{k}_n\leq C k_n)\geq 1-\eps$ holds under the polished tail condition.
Finally note that in view of the monotonicity of $b(k)$ we have with probability larger than $1-\eps$ for every $f_0\in \mathcal{H}_s^{\beta}(L)$ that
\begin{align*}
d_n(\theta_0,\hat\theta_{\hat{k}_n})&\geq b(\hat{k}_n)^{1/2}\geq b(Ck_n)^{1/2}\gtrsim \|\theta_0-\theta_{0,[Ck_n]}\|_2\\
&\geq L_0^{-1} \big(\sum_{i=Ck_n}^{\infty}i^{-1-2\beta}\big)^{1/2}\gtrsim k_n^{-\beta}\gtrsim \sqrt{k_n(\log n)/n}.
\end{align*}
Therefore we can conclude that for any $m_n = o(1) $, combining the above inequality with \eqref{eq: UB:radius} implies that for all $\epsilon>0$, when $n$ is large enough
\begin{align*}
\inf_{f_0\in \mathcal{H}_s^{\beta}(L)}P_{f_0}^{(n)}\Big( d_n(\theta_0,\hat\theta_{\hat{k}_n})\geq  m_n \sqrt{\log n}r_{\alpha}\Big)\geq 1-\eps,
\end{align*}
which proves Proposition \ref{prop: UB:radius} .

It remains to prove assertion \eqref{eq: UB:radius}. Note that the posterior distribution can be written in the form
\begin{align*}
\pi_{|\hat{k}_n}(\theta|\textbf{Y})\propto \exp\Big\{\sum_{j=1}^{\hat{k}_n}\Big( -n\theta_j^2+\log g(\theta_j)+2\sum_{i=1}^n Y_i\phi_j(x_i) \theta_j\Big)\Big\},
\end{align*}
hence due to Chebyshev's inequality it is sufficient to verify that the random variable with density function proportional to $z\mapsto \exp\big(  -nz^2+\log g(z)+2\sum_{i=1}^n Y_i\phi_j(x_i) z\big)$ has variance bounded from above by a multiple of $1/n$. By elementary conjugate computations one can see that this holds for $g$ equal to the normal distribution with fixed parameters $\mu\in\mathbb{R}$ and $\sigma>0$ and the Laplace distribution with parameters $\mu\in\mathbb{R}$ and $b>0$.

\subsection{Proof of Proposition \ref{prop: UB:radius}}\label{sec: UB:radius}
We show below that 
\begin{align}
\mbox{limsup}_n \sqrt{n/\hat{k}_n}\diam \big(\hat{C}_n(1,\alpha),d_n\big)< +\infty ,\quad \text{$P_{f_0}$-almost surely}.\label{eq: UB:radius}
\end{align}
Furthermore we know that in view of assertion \eqref{eq: MMLE} (see the proof of Theorem \ref{th:EmpBayesCoverage}) and Lemma \ref{rem:Kn} of \cite{rousseau:szabo:2015:main} we have that 
$\inf_{\theta_0\in\Theta_0} P_{\theta_0}(C^{-1}k_n\leq \hat{k}_n\leq C k_n)\geq 1-\eps.$
Finally note that in view of the monotonicity of $b(k)$ we have with probability larger than $1-\eps$ for every $f_0\in \mathcal{H}_s^{\beta}(L)$ that
\begin{align*}
d_n(\theta_0,\hat\theta_{\hat{k}_n})&\geq b(\hat{k}_n)^{1/2}\geq b(Ck_n)^{1/2}\gtrsim \|\theta_0-\theta_{0,[Ck_n]}\|_2\\
&\geq L_0^{-1} \big(\sum_{i=Ck_n}^{\infty}i^{-1-2\beta}\big)^{1/2}\gtrsim k_n^{-\beta}\gtrsim \sqrt{k_n(\log n)/n}.
\end{align*}
Therefore we can conclude that for any $m_n = o(1) $, combining the above inequality with \eqref{eq: UB:radius} implies that for all $\epsilon>0$, when $n$ is large enough
\begin{align*}
\inf_{f_0\in \mathcal{H}_s^{\beta}(L)}P_{f_0}^{(n)}\Big( d_n(\theta_0,\hat\theta_{\hat{k}_n})>  m_n \sqrt{\log n}r_{\alpha}\Big)\geq 1-\eps,
\end{align*}
which proves Proposition \ref{prop: UB:radius} .

It remains to prove assertion \eqref{eq: UB:radius}. Note that the posterior distribution can be written in the form
\begin{align*}
\pi_{|\hat{k}_n}(\theta|\textbf{Y})\propto \exp\Big\{\sum_{j=1}^{\hat{k}_n}\Big( -n\theta_j^2+\log g(\theta_j)+2\sum_{i=1}^n Y_i\phi_j(x_i) \theta_j\Big)\Big\},
\end{align*}
hence due to Chebyshev's inequality it is sufficient to verify that the random variable with density function proportional to $z\mapsto \exp\big(  -nz^2+\log g(z)+2\sum_{i=1}^n Y_i\phi_j(x_i) z\big)$ has variance bounded from above by a multiple of $1/n$. By elementary conjugate computations one can see that this holds for $g$ equal to the normal distribution with fixed parameters $\mu\in\mathbb{R}$ and $\sigma>0$ and the Laplace distribution with parameters $\mu\in\mathbb{R}$ and $b>0$.

\subsection{Proof of Proposition \ref{prop: UB:radius:HB}} \label{sec: UB:radius:HB}

In this section we prove that inflation by a term going to infinity is also necessary in the case of hierarchical Bayes posteriors, in the context of nonparametric regression. 
We assume that 
 $$ \Phi_{K_n}^T \Phi_{K_n} = I_{K_n} , \quad K_n = n/\log n .$$
We also assume that the parameter $\theta_0$ belongs to the hyper-rectangle $\mathcal H_s^\beta(L_0)$ with $\beta >1$ and verifies that
\begin{equation}\label{cond:subHR}
\exists r_\infty \in [1/L_0, L_0]; \, \quad \mbox{s.t.} \,  \lim_{i \rightarrow \infty} i^{(2\beta+1)}\theta_{0,i}^2 =r_\infty^2.
 \end{equation}
From Proposition \ref{prop: reg} in the main document 
we then have that the posterior concentration rate is of order $\epsilon_n \lesssim  (n/\log n)^{-\beta/(2\beta+1)} $, that $\mathcal K_n(M) \subset [m_1 k_n, m_2 k_n]$ with $k_n = (n/\log n)^{1/(2\beta+1)} $ for some $m_1,m_2>0$ and for all $\| \theta - \theta_0\|_2  \lesssim \epsilon_n = o(1/\sqrt{k_n}) $, $\theta \in \cup_{k \in \mathcal K_n(M)} \Theta_k$, the log-likelihood is equal to
\begin{equation}\label{like:simp}
\begin{split}
\ell_n(\theta ) - \ell_n(\theta_0)  &= -\frac{  n \|\theta - \theta_{0, [K_n]} \|_2^2 }{ 2 }  + \sqrt{n} \langle Z, \Phi_k \theta -\Phi_{K_n} \theta_0\rangle + o_p(1)   \\
& = -\frac{  n \|\theta - \theta_{0} \|_2^2 }{ 2 }  +  \sqrt{n}\langle Z, \Phi_k( \theta - \theta_{[k]}^o) \rangle  +\sqrt{n} \langle Z , P_k^\perp \theta_0\rangle + o_p(1)   \\
\end{split} 
\end{equation}
where $P_k^\perp = I - \Phi_k  \Phi_k^T $ is the projection on the orthogonal of the space generated by $\Phi_k$ and $\theta_{[k]}^o $ is the orthogonal projection of $f_{\theta_0}$ on $ \Phi_k$ and $Z = Y - f_{0,n}$. We also denote by $P_k =  \Phi_k  \Phi_k^T $ the orthogonal projection on $ \Phi_k$. 
We have that 
$$\| \theta_{[k]}^o -\theta_{0,[k]} \|_2^2 \leq \|\theta_0 - \theta_{0, [K_n]}\|_2^2 \lesssim (n/\log n)^{-2\beta} =   o(1/n). $$
Then  for all $k \in \mathcal K_n(M)$,  since $\log g$ is continuously differentiable, 
\begin{equation*}
\pi_k(k| Y)  \propto \pi_k(k) \prod_{j=1}^k g(\theta_{0,j}  ) e^{ \frac{  - n \|\theta_{0,[k]} - \theta_{0} \|_2^2 + \|\Phi_k^T Z\|_2^2 }{ 2 }    + \sqrt{n}\langle Z , P_k^\perp \theta_0\rangle  - k \log (\frac{n}{2\pi})/2  +o_p(1) }.
\end{equation*} 

To control the radius of the credible balls we provide tighter bounds for the concentration of the hyper-posterior $\pi_k(.|Y)$, i.e. we show  that there exists  $(k_n^*)^{2/3}\lesssim h_n  = o(k_n^*)$, satisfying  $ \ell(k_n^*) \leq h_n /k_n^*$, such that 
\begin{align}
\pi_k ( k \in (k_n^* - h_n, k_n^*+h_n) | Y) = 1 + o_p (1) .\label{eq: hyper:post:conc:HBcase}
\end{align}
Then in view of \eqref{like:simp}, we also have that uniformly on $k \leq  m_2 k_n$, 
$$ \mathbb E( \theta | Y, k) = \theta_{0,[k]} - \frac{ P_k Z}{\sqrt{n} } + o_p(1/\sqrt{n})   .$$ 
Let $\hat \theta $ be the posterior mean, then 
$$1 - \alpha \leq \Pi( \| \theta - \hat \theta\|_2^2 \leq r_\alpha^2 |Y) \leq \frac{ \sum_{k= k_n^*-h_n}^{k_n^* + h_n} \pi_k(k| Y) [ \|\hat \theta -  \mathbb E( \theta | Y, k)\|_2^2 + \frac{ k}{n}  + o(k/n) ] }{ r_\alpha^2} 
$$ 
so that 
 \begin{equation*}
\begin{split} 
r_\alpha^2 \lesssim  \frac{1}{1-\alpha}[ 2 h_n (k_n^*)^{-2\beta-1} + \frac{ k_n^* }{n} ] \lesssim (k_n^*)^{-2\beta}\left( \frac{h_n}{  k_n^*} + \frac{ 1 }{ \log n} \right) 
\end{split} 
\end{equation*} 
and it is necessary to inflate by a term going to infinity (as the squared bias is bounded from below by $k_n^{-2\beta}\asymp (k_n^*)^{-2\beta}$). This term depends on convergence rate of $r_i^2 $ to $r_\infty^2$.

It remained to prove assertion \eqref{eq: hyper:post:conc:HBcase}. Before proceeding to the proof we introduce several new notations. Define $$L_n (k)  =  n \|\theta_{0,[k]} - \theta_{0} \|_2^2  + k \log \left(\frac{n}{2\pi e g(0)^2 } \right) - 2 \log \pi_k(k),$$ 
  where $2\log \pi_k(k) = -  a k - b k \log (k) + a'\log k + O(1) $,  where $b=a' = 0 $ in the case of a Geometric distribution and $a = -2(\log \lambda +1) $, $b = 2$, $a' = 1$ if $\pi_k $ is the  Poisson distribution. Write $\theta_{0,i}^2 = r_i^2i^{-2\beta-1} $ 
then $r_i \rightarrow r_\infty$, as $i$ goes to infinity; define    $c    = e^{a} /(2\pi e g(0)^2)$ and 
$$ k_n^* =\left\lfloor \left(\frac{ n r_\infty^2(2\beta+1) }{ (b+2\beta+1)\log n + (2\beta+1)\log c+ b\log r_\infty^2 -b\log \log (nc) } \right)^{1/(2\beta+1)}\right\rfloor,$$  
when $n$  is large enough so that it exists. Then 
\begin{equation} \label{eq:kn}
 nr_\infty^2(k_n^*)^{-(2\beta+1)}  = \log(nc) + b\log(k_n^*) + b + O( \log n/k_n^*). 
 \end{equation}

First  let $k > k_n^*$,  write $k = k_n^* + h$ with $h>0$, then   
\begin{align*}
\frac{ \pi_k(k| Y) }{  \pi_k(k_n^*| Y) } 
 & = e^{ -\frac{  L_n(k) - L_n (k_n^*) }{2} }  e^{ \sqrt{n} \langle Z , P_k P_{k_n^*}^{\perp}\theta_0\rangle +  \frac{\|P_k P_{k_n^*}^{\perp} Z\|^2 - h }{2}}  ( 1 + o_p(1))\\
 &=  e^{ \frac{   n \|\theta_{0,[k_n^*]} - \theta_{0,[k_n^*+h]} \|_2^2 - h  \log (n c ) - bh \log (k_n^*)-  [b (k_n^*+h) + a'] \log ( 1 + h/k_n^*)  }{ 2 }    +\sqrt{n} \langle Z , P_k P_{k_n^*}^{\perp}\theta_0\rangle} \\
&\qquad\times  e^{ \frac{\|P_k P_{k_n^*}^{\perp} Z\|^2 - h }{2}}  ( 1 + o_p(1)). \\
\end{align*} 
Note that we have
\begin{align*}
\|\theta_{0,[k_n^*]} - \theta_{0,[k_n^*+h]} \|_2^2 &= \sum_{i=1}^h r_{k_n^*+ i}^2 (k_n^*+ i)^{-2\beta-1}\\
&= (k_n^*)^{-2\beta-1}r_\infty^2 \sum_{i=1}^h \frac{r_{k_n^*+ i}^2}{ r_\infty^2 } (1+ i/k_n^*)^{-2\beta-1}.
\end{align*}
Furthermore, set 
\begin{align*}
\delta_{n,h}  &:=  r_{\infty}^{-2}\frac{ \sum_{i=1}^h (r_{k_n^*+i}^2- r_{\infty}^2)(1 + i/k_n^*)^{-2\beta-1}}{ \sum_{ i = 1}^{h}  (1 + i/k_n^*)^{-2\beta-1} }
\end{align*}
then, for $h = o(k_n^*)$,  
\begin{align*}
|\delta_{n,h} |
 &\lesssim \frac{ \sum_{i=1}^h \ell(k_n^*+i)}{ h } + o ( h/k_n^*)  = \ell(k_n^*)+ o ( h/k_n^*),
\end{align*}
since $\ell $ is slowly varying. This leads to, for all $h = o(k_n^*)$, 
\begin{equation*}
\begin{split} 
   - L_n(k) + L_n (k_n^*) 
 & = 
 n (k_n^*)^{-2\beta-1} r_{\infty}^2(1 + \delta_{n,h} ) \sum_{ i = 1}^{h}  (1 + i/k_n^*)^{-2\beta-1} - h \log (n c )\\
&\qquad - bh \log (k_n^*)-  [b(k_n^*+h)-a'] \log ( 1 + h/k_n^*)  \\
 & =  [\log(nc) + b\log(k_n^*)+b ] \left( (1 + \delta_{n,h})\sum_{ i = 1}^{h}  (1 + i/k_n^*)^{-2\beta-1} -h \right) \\
&\qquad - \frac{ b h^2}{ 2k_n^*} + O\left( \frac{h \log n  }{k_n^*}+ \frac{h^3}{(k_n^*)^2} \right)  + o(1) \\
& = [\log(nc) + b\log(k_n^*) ]h  \left(\delta_{n,h}  (1 + o(1)) - (2\beta+1) \frac{ h }{ 2k_n^*}  \right)\\
&\qquad  +  O\left( \frac{h^2 }{k_n^*}\right) +o(1).
 \end{split} 
 \end{equation*}
So that, choosing  
$$ (2\beta+1) \frac{ h }{ 2k_n^*} \geq 2\delta_{n,h},  $$
leads to
\begin{equation*}
\frac{ \pi_k(k| Y) }{  \pi_k(k_n^*| Y) } \leq   e^{-\frac{ (2\beta+1) [\log(nc) + b\log(k_n^*) ]h^2 }{4 k_n^* }  +   \sqrt{n}\langle Z , P_k P_{k_n^*}^{\perp}\theta_0\rangle  }   e^{ \frac{\|P_k P_{k_n^*}^{\perp} Z\|^2 - h  }{2}}( 1 + o_p(1)).  
\end{equation*} 
With probability going to 1, for all $h \lesssim k_n^*$, 
$$\sqrt{n}|\langle Z , P_k P_{k_n^*}^{\perp}\theta_0\rangle | \lesssim \sqrt{n} (k_n^*)^{-\beta-1/2} \sqrt{h} \sqrt{\log n} \lesssim \sqrt{h} \log n = o(h^2 \log n/ k_n^*)$$ 
as soon as $ (k_n^*)^{2/3} = o(h) $. In this case, 
\begin{equation*}
\frac{ \pi_k(k| Y) }{  \pi_k(k_n^*| Y) } \leq   e^{- \frac{ h^2 C \log n  }{ k_n^*}} , \quad \text{when } \quad  h \geq\max\left(  \frac{  4 \ell(k_n^*)  k_n^*}{(2\beta+1 )} , z_n(k_n^*)^{2/3} \right), \, z_n \rightarrow +\infty.
\end{equation*} 
Also, if $h \geq \delta k_n^*$, then there exists $\tau >0$ such that 
$$ \sum_{ i = 1}^{h}  (1 + i/k_n^*)^{-2\beta-1} \leq (1-\tau)h $$ and for $n$ large enough
\begin{equation*}
\begin{split} 
 - L_n(k) + L_n (k_n^*) \leq  - \frac{ \tau h [\log(nc) + b\log(k_n^*) ]}{2} ,
 \end{split} 
 \end{equation*}
 and the same result holds. 
Let $k = k_n^*- h $, with $0<h = o(k_n^*)$,  then neglecting all terms of order $o(1)$, 
\begin{align*}
- L_n(k) + L_n (k_n^*) & = - n \|\theta_{0,[k_n^*]} - \theta_{0,[k]} \|_2^2 + (k_n^*-k)  \log (n c ) + b( k_n^*- k) \log (k_n^*)\\
&\qquad+  [bk-a'] \log ( k_n^*/k) \\ 
& = -[\log(nc) + b\log(k_n^*) + b](1 + \delta_{n,h}) \sum_{ i = 1}^{h}  (1 - i/k_n^*)^{-2\beta-1} \\
& \qquad+h\log (n c ) +bh \log (k_n^*) -b(k_n^*-h) \log ( 1 - h/k_n^*)\\
 & \leq  -[\log(nc) + b\log(k_n^*)] h \left( (2\beta+1) \frac{ h }{k_n^*} +  \delta_{n,h}(1+o(1))\right) + O\left(\frac{ h^2 }{2k_n^*}  \right)
 \end{align*}
and similarly to before if $h(2\beta+1) \geq -2\delta_{n,h} k_n^*$ then  
 \begin{equation*}
\begin{split} 
- L_n(k) + L_n (k_n^*) \leq -c [\log(nc) + b\log(k_n^*)]h^2 /k_n^*
\end{split} 
 \end{equation*}
for some $c >0$. 
This proves that for all  $(k_n^*)^{2/3}\lesssim h_n  = o(k_n^*)$ such that $ \ell(k_n^*) \leq h_n /k_n^*$ 
assertion \eqref{eq: hyper:post:conc:HBcase} holds.


\subsection{Proof of Proposition \ref{prop: hist}}\label{sec: hist} 
As a first step we introduce some additional notation which will be used throughout the proof. Let us denote by $n_j$ the number of observations falling into the $j$th bin $I_j=[(j-1)/k,j/k)$, $1\leq j\leq k$, let 
$\theta_{[k]}^o = ( \int_{I_j}p_0(x) dx, j \leq k ) \in \Theta(k)=\mathcal{S}_k$ and we use the abbreviation $\theta_j^o=\theta_{[k],j}^o$ for the $j$th coefficient of the vector $\theta_{[k]}^o$. Note that $c_0/k\leq\theta_{j}^o\leq C_0/k$ following from $c_0\leq p_0(x)\leq C_0$. Let $\rho_n=\delta/\sqrt{\log n}$ for some sufficiently small $\delta>0$ and $h(.,.)$ denote the Hellinger distance.

First note that by the mean value theorem for $p_0\in\mathcal{H}^{\beta}(L_0)$ we have $\|p_0-p_{\theta_{[k]}^o}\|_{\infty}\lesssim k^{-\beta}$. This, combined with \eqref{Hell:Hist} (with $\theta=\theta_{[k]}^o$), and the inequality $\|p_0-p_{\theta_{[k]}^o}\|_2\leq \|p_0-p_{\theta_{[k]}^o}\|_{\infty}$ implies that $h(p_0,p_{\theta_{[k]}^o})\lesssim k^{-\beta}$. Then it is easy to see (using similar arguments to the one below \eqref{eq: UBforBias_reg}) that $k_n\lesssim (n/\log n)^{1/(1+2\beta)}=o(n^{1/(1+2\beta_0)})$ and $\eps_n\lesssim (n/\log n)^{-\beta/(1+2\beta)}$.

Next we deal with the first assertion of the proposition. Since the density $p_0$ is bounded from below and above by some positive constants, the Kullback-Leibler divergence and second moment of the likelihood ratio are both bounded by the square Hellinger distance, establishing condition \textbf{A1}, see Lemma 8.2 of \cite{ghosal:ghosh:vdv:00}.

Next note that by taking $\Theta_n(k) = \Theta(k)$ condition \textbf{A2} (i) automatically holds. Then for all $u$ such that $u^2 \gtrsim k/n$ the entropy condition \textbf{A2} (iii) is verified with $d(.,.)$ the Hellinger metric, see for instance the proof of Proposition 3.6 in \cite{rousseau:szabo:15:supp}. The existence of tests , i.e. condition \textbf{A2} (ii),  with respect of the Hellinger distance  (with $c_6 =1/18$) follows from see \cite{birge:83}.
 
For condition \textbf{A4} (i) we note that in view of Lemma \ref{lem: help_hist} (with $\mu_n=1$ and $\tilde\theta=\theta_{[k]}^o$) for $\theta\in B_k(\theta_{[k]}^o,\sqrt{k/n},h)$  by Taylor's series expansion we get that
\begin{equation}\label{eq: hist:UBforKL}
\begin{split}
\int p_{0}\log \left( \frac{ p_{\theta_{[k]}^o} }{ p_\theta } \right) & = \sum_{j=1}^k \theta_{j}^o \log ( \theta_{j}^o/ \theta_j)  \leq k c_0^{-1}\sum_{j=1}^k (\theta_{j}^o- \theta_j)^2 \leq \frac{ 9C_0 k}{ c_0n },\\
\int  p_{0}\log^2 \left( \frac{ p_{\theta_{[k]}^o} }{ p_\theta } \right)   & \lesssim \frac{ 9C_0 k}{ c_0n },
\end{split}
\end{equation}
resulting in $B_k(\theta_{[k]}^o,\sqrt{k/n},h)\subset \mathcal{S}(k,9C_0/c_0,9CC_0/c_0,2)$.

Next we verify \textbf{A4} (ii).  Let $n_j = \sum_{i=1}^n \1_{Y_i \in I_j} $ and note that
\begin{equation*}
\begin{split}
\ell_n(\theta)-\ell_n(\theta_{[k]}^o)
&= \sum_{j=1}^k(n_j-n\theta_{j}^o)(\log \theta_j-\log \theta_{j}^o) + n \sum_{j=1}^k\theta_{j}^o(\log \theta_j-\log \theta_{j}^o) \\
&\leq \left( \sum_{j=1}^k \frac{(n_j-n\theta_{j}^o)^2}{\theta_j^o} \right)^{1/2}  \left(\sum_{j=1}^k \theta_j^o (\log \theta_j-\log \theta_{j}^o)^2 \right)^{1/2}\\
&\qquad - n \sum_{j=1}^k\theta_{j}^o(\log \theta_j^o-\log \theta_{j}).
\end{split}
\end{equation*}
Note that in view of \eqref{Hell:Hist} (with $\theta=\theta_{[k]}^o$) and Lemma \ref{lem: histogram_projection} we have that $h^2(p_0,p_{\theta_{[k]}^o})\leq C'\inf_{\theta\in\mathbb{R}^k}h^2(p_0,p_{\theta})=C' b(k)$.
Therefore in view of  Lemma \ref{lem: help_hist} (with $\tilde\theta=\theta_{[k]}^o$ and $\mu_n$ taken to be a large enough constant) for $\theta\in B_k\big(\theta_0,(M_{\eps}+1)\eps_n(k_n), h\big)\subset B_k\big(\theta_{[k]}^o,\tilde{C}\sqrt{k_n(\log n)/n}, h\big)$, $k\in\mathcal{K}_n(M)$ (for some large enough $\tilde{C}>0$) we have $c_0/(2k) \leq \theta_j \leq 4C_0/k$ and using Taylor series expansion of $\log\theta_j$ around $\theta_j^o$ for every $j=1,...,k$,
$$\sum_{j=1}^k \theta_j^o (\log \theta_j-\log \theta_{j}^o)^2 \leq \sum_{j=1}^k\frac{\theta_j^o (\theta_j- \theta_{j}^o)^2}{ 2( \theta_j^o \wedge \theta_j)^2} \leq \frac{ 2C_0k\|\theta_{[k]}^o - \theta\|_2^2 }{ c_0^2} $$
and 
$$\sum_{j=1}^k\theta_{j}^o(\log \theta_j^o-\log \theta_{j})\geq \sum_{j=1}^k\frac{\theta_j^o (\theta_j- \theta_{j}^o)^2}{ 2( \theta_j^o \vee \theta_j)^2}  \geq \frac{ c_0k}{ 2^5C_0^2} \|\theta_{[k]}^o - \theta\|_2^2.$$
So that there exist $C_1, C_2>0$ such that
\begin{equation*}
\begin{split}
\ell_n(\theta)-\ell_n(\theta_{[k]}^o)
&\leq \left( \sum_{j=1}^k \frac{(n_j-n\theta_{j}^o)^2}{\theta_j^o} \right)^{1/2}  \sqrt{k} C_1 \|\theta_{[k]}^o - \theta\|_2 - n k C_2\|\theta_{[k]}^o - \theta\|_2^2 .
\end{split}
\end{equation*}
We show below that for all $\epsilon>0$ there exists $B_\epsilon>0$ such that 
\begin{equation}\label{control:stoch}
 P_{p_0}^{(n)}\left( \sup_{k\in\mathcal{K}_n(M)}\sum_{j=1}^k (n_j-n\theta_{j}^o)^2 > nB_\epsilon \right) \lesssim \eps.
 \end{equation}
Then on the event  $\sum_{j=1}^k (n_j-n\theta_{j}^o)^2 \leq nB_\epsilon$, for all $k\in\mathcal{K}_n(M)$
\begin{equation*}
\ell_n(\theta)-\ell_n(\theta_{[k]}^o)
\leq \sqrt{nk} \|\theta_{[k]}^o - \theta\|_2 \left(  \sqrt{B_\epsilon}   C_1  - \sqrt{nk}C_2\|\theta_{[k]}^o - \theta\|_2\right)\leq B_\epsilon C_1^2/C_2.
\end{equation*}

We now prove \eqref{control:stoch}. First we note that in view of Lemma \ref{rem:Kn} of \cite{rousseau:szabo:16:main} it is sufficient to show for every $k\in\mathcal{K}_n(M)$
\begin{align*}
 P_{p_0}^{(n)}\left( \sum_{j=1}^k (n_j-n\theta_{j}^o)^2 > nB_\epsilon \right) \leq C\frac{\eps}{k}.
\end{align*}
By the properties of the categorical random variable
\begin{equation*}
\sum_{j=1}^k E_{p_0}^{(n)}(n_j-n\theta_{j}^o)^2=n\sum_{j=1}^k\theta_{j}^o(1-\theta_{j}^o) \leq n.
\end{equation*}
Using Lemma \ref{lem:Variance:histo}, for $k\in\mathcal{K}_n(M)$ and Chebyshev's inequality, if $ B_\epsilon>1$
\begin{equation*}
P_{p_0}^{(n)}\left( \sum_{j=1}^k (n_j-n\theta_{j}^o)^2 > n B_\epsilon  \right) \lesssim \frac{ 1 }{ k (B_\epsilon-1)^2 }.
\end{equation*}
finishing the proof of condition  \textbf{A4} (ii) for sufficiently large choice of $B_{\eps}$.

Next we prove assumption \textbf{A4} (iii). Let us denote by $\tilde \theta_{[k]}$ the $h(.,.)$-projection of $\tilde \theta$ onto $\Theta(k)$, for $\tilde \theta $ satisfying $h(p_{0}, p_{\tilde \theta}) \leq M_\epsilon \epsilon_n$. We first show  that there exist $C_1,C_2>0$ such that
\begin{align}
\frac{\pi_{|k}\big( B_k(\tilde \theta, \delta_{n,k} \sqrt{k/n},h )\big)}{\pi_{|k}\big( B_k(\theta_{[k]}^o, \sqrt{k/n},h )\big)}\leq \frac{\pi_{|k}\big( B_k(\tilde\theta_{[k]},C_1\delta_{n,k} /\sqrt{n},\|.\|_2 )\big)}{\pi_{|k}\big( B_k(\theta_{[k]}^o, C_2 /\sqrt{n},\|.\|_2) \big)}.\label{eq:help_hist_fraction}
\end{align}
For $k\in\mathcal{K}_n(M)$ we have
$$h(p_{\tilde \theta_{[k]}},p_{\theta_{[k]}^o})\leq h(p_{\tilde \theta},p_{\theta_{[k]}^o}) \leq h(p_{\tilde \theta},p_0)+h(p_{0},p_{\theta_{[k]}^o})\lesssim k_n(\log n)/n.$$
 Therefore by applying Lemma  \ref{lem: help_hist} (with $\mu_n=C\log n$ and $\tilde\theta=\theta_{[k]}^o$) we get that $\tilde \theta_{[k],j}\asymp k^{-1}$, for $j=1,...,k$.  Then by applying again 
 Lemma  \ref{lem: help_hist} (with $\mu_n=C\log n$ and $\tilde\theta=\tilde \theta_{[k]}$) we get that 
on $\theta\in  B_k(\tilde \theta_{[k]},2\delta_{n,k} \sqrt{k/n},h )\supset B_k(\tilde \theta,\delta_{n,k} \sqrt{k/n},h )$, $\theta_j\asymp k^{-1}$, $j=1,...,k$. Therefore as a consequence of assertion \eqref{eq: hist_help_1} 
$$h(p_{\tilde \theta_{[k]}}, p_{ \theta})\asymp \sqrt{k}\|\tilde\theta_{[k]}-\theta\|_2.$$
The same argument with $\mu_n=1$ gives the preceding display also for $\theta\in B_k(\theta_{[k]}^o, \sqrt{k/n},h)$, leading to \eqref{eq:help_hist_fraction}. 

Since $\pi_{|k}$ is a Dirichlet prior with parameters $(\alpha_{1,k},...,\alpha_{k,k})$ on the $k$-dimensional simplex  $\Theta(k)=\mathcal{S}_k$, with 
$k^{-a}c_1 \leq \alpha_{j,k} \leq C_1$, there exists a constant $C>0$ such that 
\begin{equation*}
\begin{split}
\frac{\pi_{|k}\big( B_k(\tilde\theta_{[k]},C_1\delta_{n,k}  /\sqrt{n},\|.\|_2 )\big)}{\pi_{|k}\big( B_k(\theta_{[k]}^o, C_2 /\sqrt{n},\|.\|_2)\big)} &  \leq  C^k\frac{  \text{Vol}\left(B_k(\tilde\theta_{[k]},C_1\delta_{n,k}/ \sqrt{n},\|.\|_2 )\cap \mathcal{S}_k\right) }{ \text{Vol}\left(B_k(\theta_{[k]}^o, C_2/\sqrt{n},\|.\|_2)\cap \mathcal{S}_k \right) }.
\end{split}
\end{equation*}
Moreover, since $\tilde \theta_{[k]} \in \Theta(k)$ and since $\tilde \theta_{[k],j} \gtrsim 1/k $ with $1/\sqrt{n} = o(1/k)$ we can re-express in a bijective way 
any $\theta \in B_k(\tilde\theta_{[k]},C_1\delta_{n,k}/ \sqrt{n},\|.\|_2 )\cap \mathcal{S}_k$  as $\tilde \theta_{[k]} + u $ with $u \in \mathbb R^k$, $\|u\|_2 \leq C_1\delta_{n,k}/ \sqrt{n}$ and $\1^T u = 0$. Moreover a $k$ dimensional ball with radius $r$ and centered at 0 intersected with a hyperplane (containing 0) is a $k-1 $ dimensional ball with the same radius so that 
$$ \text{Vol}\left(B_k(\tilde\theta_{[k]},C_1\delta_{n,k}/ \sqrt{n},\|.\|_2 )\cap \mathcal{S}_k \right) = \text{Vol}\left( B_{k-1}(0, C_1\delta_{n,k}/ \sqrt{n},\|.\|_2)\right).$$
The same argument implies that 
$$ \text{Vol}\left(B_k(\theta_{[k]}^o,C_2/ \sqrt{n},\|.\|_2 )\cap \mathcal{S}_k \right) = \text{Vol}\left( B_{k-1}(0, C_2/ \sqrt{n},\|.\|_2)\right)$$
and therefore
\begin{equation*}
\begin{split}
\frac{\pi_{|k}\big( B_k(\tilde\theta_{[k]},C_1\delta_{n,k} /\sqrt{n},\|.\|_2 )\big)}{\pi_{|k}\big( B_k(\theta_{[k]}^o, C_2/\sqrt{n},\|.\|_2)\big)} &  \leq  C^k\left( \frac{  C_1 \delta_{n,k} }{ C_2} \right)^{k-1} \leq e^{ k \log (\delta_{n,k} )/2}
\end{split}
\end{equation*}
for $\delta_{n,k}$ small enough. 


Finally we prove \textbf{A3}. By triangle inequality and $h^2(p_0,p_{\theta_{[k]}^o})\lesssim \|p_0 - p_{\theta^o_{[k]}}\|_2^2\asymp b(k)$ (see Lemma \ref{lem: histogram_projection} and assertion \eqref{Hell:Hist} with $\theta=\theta_{[k]}^o$) we have that there exists $C>0$ such that
\begin{align*}
B_k(\theta_0,J_1 \sqrt{k(\log n)/n},h)&\subset  B_k(\theta_{[k]}^o,C\sqrt{k(\log n)/n},h)\\
&\subset B_k(\theta_{[k]}^o,C_3\sqrt{(\log n)/n},\|.\|_2).
\end{align*}
Furthermore, in view of Lemma \ref{lem: help_hist} (with $\mu_n=\log n$) we have $c/k\leq |\theta_j|\leq C/k$, $j=1,...,k$,
 for $\theta\in B_k(\theta_0,J_1 \sqrt{k(\log n)/n},h)$. Therefore
\begin{align*}
\pi_{|k}&\big(B_k(\theta_0,J_1 \sqrt{k(\log n)/n},h)\big)\\
&\leq \frac{\Gamma(\sum_j \alpha_{j,k})}{\prod_j \Gamma(\alpha_{j,k})} (C/k)^{\sum_j (\alpha_{j,k} -1)}\text{Vol}\left( B_{k-1}(0, C_2\sqrt{(\log n)/ n},\|.\|_2)\right)
\end{align*}
From the assumption $\alpha_{j,k}\leq C$ we get that in view of Stirling's approximation that $\Gamma(\sum_j \alpha_{j,k})k^{-\sum_{j}\alpha_{j,k}}\lesssim e^{Ck}$. We conclude the proof by noting that $\text{Vol}\left( B_{k-1}(0, C_2\sqrt{\log_2 n/ n},\|.\|_2)\right)\lesssim e^{-ck\log n}$ and taking $M_0$ large enough.


\begin{lemma}\label{lem: help_hist}
Assume that $\tilde\theta\in\Theta(k)$ satisfies $c_0 k^{-1}\leq \tilde\theta_j\leq C_0 k^{-1}$, $j=1,...,k$, for some $0<c_0<C_0$. Then for every $\theta\in B_{k}(\tilde\theta, \sqrt{\mu_n k/n},h )$ with  $\mu_n\leq n/(4k^2)$ we have 
\begin{align*}
&c_0^2/(2k)\leq \theta_j\leq 4C_0/k,\qquad \text{for every $j=1,...,k$,}\\
 &\sum_{j=1}^k   (\theta_j- \tilde\theta_j)^2\leq \frac{ 9C_0\mu_n}{n}.
 \end{align*}
\end{lemma}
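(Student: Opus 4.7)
The plan is to exploit the explicit form of the Hellinger distance between histograms on the same regular partition, which reduces the problem to a coordinatewise inequality on the square roots $\sqrt{\theta_j}$.

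First, I would compute directly that for any two histograms $p_\theta$, $p_{\tilde\theta}$ of the form \eqref{def: hist} on the regular partition of $[0,1]$,
\[
h^2(p_\theta, p_{\tilde\theta}) = \sum_{j=1}^k \int_{I_j} \big(\sqrt{k\theta_j}-\sqrt{k\tilde\theta_j}\big)^2 dx = \sum_{j=1}^k \big(\sqrt{\theta_j}-\sqrt{\tilde\theta_j}\big)^2.
\]
The assumption $\theta \in B_k(\tilde\theta,\sqrt{\mu_n k/n},h)$ then gives $|\sqrt{\theta_j}-\sqrt{\tilde\theta_j}|\leq \sqrt{\mu_n k/n}$ for every $j$. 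Combined with the hypothesis $\sqrt{c_0/k}\leq \sqrt{\tilde\theta_j}\leq \sqrt{C_0/k}$ and the smallness condition $\mu_n k^2/n=o(1)$ (so that $\sqrt{\mu_n k/n}$ is eventually much smaller than $\sqrt{c_0/k}$), the triangle inequality yields both the lower bound $\sqrt{\theta_j}\geq \sqrt{c_0/k}-\sqrt{\mu_n k/n}\geq \tfrac12\sqrt{c_0/k}$ (hence $\theta_j\geq c_0/(4k)$, which implies the stated $c_0^2/(2k)$ bound for $c_0\leq 1$) and the upper bound $\sqrt{\theta_j}\leq \sqrt{C_0/k}+\sqrt{\mu_n k/n}\leq 2\sqrt{C_0/k}$, i.e., $\theta_j\leq 4C_0/k$.

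For the second inequality I would use the factorization
\[
(\theta_j-\tilde\theta_j)^2 = \big(\sqrt{\theta_j}-\sqrt{\tilde\theta_j}\big)^2\,\big(\sqrt{\theta_j}+\sqrt{\tilde\theta_j}\big)^2,
\]
and bound the second factor using the pointwise upper bounds just obtained:
\(\sqrt{\theta_j}+\sqrt{\tilde\theta_j}\leq 2\sqrt{C_0/k}+\sqrt{C_0/k}=3\sqrt{C_0/k},\)
so $(\sqrt{\theta_j}+\sqrt{\tilde\theta_j})^2\leq 9C_0/k$. Summing over $j$ and invoking the Hellinger identity gives
\[
\sum_{j=1}^k (\theta_j-\tilde\theta_j)^2 \leq \frac{9C_0}{k}\sum_{j=1}^k \big(\sqrt{\theta_j}-\sqrt{\tilde\theta_j}\big)^2 = \frac{9C_0}{k}\,h^2(p_\theta,p_{\tilde\theta}) \leq \frac{9C_0 \mu_n}{n}.
\]
No step presents any real obstacle; the only mildly delicate point is making the asymptotic smallness condition $k^2\mu_n = o(n)$ quantitative enough to extract explicit constants in the pointwise bounds, which is straightforward.
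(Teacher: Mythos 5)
Your proof is correct and follows essentially the same route as the paper: the exact Hellinger identity for histograms on the regular partition, the coordinatewise bound on $\sqrt{\theta_j}$ via the triangle inequality and $k^2\mu_n=o(n)$, and the factorization $(\theta_j-\tilde\theta_j)^2=(\sqrt{\theta_j}-\sqrt{\tilde\theta_j})^2(\sqrt{\theta_j}+\sqrt{\tilde\theta_j})^2$ with $(\sqrt{\theta_j}+\sqrt{\tilde\theta_j})^2\le 9C_0/k$. The only blemish is the constant in your lower bound: $\theta_j\ge c_0/(4k)$ implies the stated $c_0^2/(2k)$ only when $c_0\le 1/2$, but since $k^2\mu_n=o(n)$ makes $\sqrt{\mu_n k/n}$ an arbitrarily small fraction of $\sqrt{c_0/k}$, you may instead conclude $\theta_j\ge c_0/(2k)\ge c_0^2/(2k)$ (using $c_0\le 1$, forced by $\tilde\theta$ lying in the simplex), exactly as the paper does.
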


\begin{proof}
First note that
\begin{equation}\label{eq: hist_help_1}
\begin{split}
\frac{ k\mu_n }{ n } &\geq h^2(p_{\tilde\theta}, p_\theta)  = \sum_{j=1}^{k} (\sqrt{\theta_j}- \sqrt{\tilde\theta_j})^2 =  \sum_{j=1}^k  \frac{ (\theta_j-\tilde\theta_j)^2}{(\sqrt{\theta_j}+ \sqrt{\tilde\theta_j})^2}.
\end{split}
\end{equation}
As a consequence for all $j$, $$\sqrt{\theta_j}\leq  \sqrt{\tilde\theta_j} + |\sqrt{\theta_j}-\sqrt{\tilde\theta_j}| \leq \sqrt{C_0}/\sqrt{k} + \sqrt{k\mu_n}/\sqrt{n} \leq 2\sqrt{C_0}/\sqrt{k}$$
and similarly $\theta_j \geq c_0/(2k)$.
We get the second statement by combining \eqref{eq: hist_help_1} with the preceding upper bound.
\end{proof}

\begin{lemma}\label{lem: histogram_projection}
For some sufficiently large $C>0$ we have
\begin{align*}
C^{-1}\inf_{\theta\in\mathbb{R}^k} h^2(p_0, p_\theta)\leq  \|p_0 - p_{\theta_{[k]}^o} \|_2^2\leq C\inf_{\theta\in\mathbb{R}^k} h^2(p_0, p_\theta).
\end{align*}
\end{lemma}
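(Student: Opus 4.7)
The plan is to exploit the fact that $p_0$ and both the $L_2$-projection $p_{\theta_{[k]}^o}$ and the Hellinger-optimal piecewise constant density lie in $[c_0, C_0]$, so that on the relevant class of functions the Hellinger and $L_2$ distances are equivalent. Once this is established, the lemma becomes a two-line consequence of the defining optimality of $\theta_{[k]}^o$ as the $L_2$-projection onto $\Theta(k)$.

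For the lower inequality, I would note that on each bin $I_j$ the function $p_{\theta_{[k]}^o}(x) = k\int_{I_j} p_0(y)\,dy$ is precisely the average of $p_0$ over the bin, so $c_0 \leq p_{\theta_{[k]}^o}(x) \leq C_0$ pointwise. The identity $h^2(p,q) = \int (p-q)^2/(\sqrt p + \sqrt q)^2\,dx$ then gives $h^2(p_0, p_{\theta_{[k]}^o}) \leq (4c_0)^{-1}\|p_0 - p_{\theta_{[k]}^o}\|_2^2$, and since $\inf_\theta h^2(p_0, p_\theta) \leq h^2(p_0, p_{\theta_{[k]}^o})$, this yields the bound with constant $C = (4c_0)^{-1}$.

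For the reverse inequality I would identify the Hellinger-optimal $\theta^*$ explicitly. Bin-wise minimization of $\int_{I_j}(\sqrt{p_0} - \sqrt{k\theta_j})^2\,dx$ over $\theta_j \geq 0$ gives $\theta_j^* = k\bigl(\int_{I_j}\sqrt{p_0}\,dx\bigr)^2$, whence $p_{\theta^*}(x) = k^2\bigl(\int_{I_j}\sqrt{p_0}\,dx\bigr)^2$ on $I_j$. Cauchy--Schwarz yields $p_{\theta^*}(x)\leq k\int_{I_j} p_0\,dx \leq C_0$, and $\sqrt{p_0}\geq\sqrt{c_0}$ pointwise gives $p_{\theta^*}(x)\geq c_0$. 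Therefore the Hellinger/$L_2$ equivalence applied to the pair $(p_0, p_{\theta^*})$ gives $\|p_0 - p_{\theta^*}\|_2^2\leq 4C_0 \,h^2(p_0, p_{\theta^*}) = 4C_0\inf_\theta h^2(p_0, p_\theta)$, and combining this with the defining $L_2$-optimality $\|p_0 - p_{\theta_{[k]}^o}\|_2^2\leq \|p_0 - p_{\theta^*}\|_2^2$ finishes the direction with $C = 4C_0$.

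The only step that is not entirely routine is the verification that the Hellinger-optimal piecewise constant density is itself pinched between $c_0$ and $C_0$; once this pointwise two-sided bound is in hand, the lemma follows immediately from the uniform bounds on $\sqrt p + \sqrt q$ and the orthogonality of the $L_2$ projection, yielding the final constant $C = \max\bigl(4C_0, (4c_0)^{-1}\bigr)$.
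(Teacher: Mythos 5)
Your proof is correct and follows essentially the same route as the paper: your Hellinger-optimal density $p_{\theta^*}(x)=k^2\bigl(\int_{I_j}\sqrt{p_0}\bigr)^2$ is exactly the quantity $k^2(\eta_j^o)^2$ the paper introduces, and both arguments combine the two-sided bounds $c_0\leq p_0\leq C_0$ (to compare $h^2$ with $\|\cdot\|_2^2$) with the $L_2$-optimality of $\theta_{[k]}^o$ among piecewise constants. Only the constants differ immaterially.
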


\begin{proof}
Note that 
 \begin{equation} \label{Hell:Hist}
\begin{split}
\|p_0 - p_\theta\|_2^2 &= \int_0^1(p_0 - p_\theta)^2(x) dx 
\geq \int_0^1(\sqrt{p_0} -\sqrt{ p_\theta})^2(x) p_0(x) dx\geq c_0 h^2(p_0, p_\theta),
 \end{split}
 \end{equation}
where $c_0$ is the lower bound for the density $p_0$. Furthermore, for all $k$
 $$\inf_{\theta \in \mathbb R^k} \|p_0 -p_\theta\|_2^2 = \|p_0 - p_{\theta_{[k]}^o} \|_2^2$$
 with $\theta_{[k]}^o = ( \int_{I_j}p_0(x) dx, j \leq k ) \in \Theta(k) $ so that $b(k) \lesssim  \|p_0 - p_{\theta_{[k]}^o}\|_2^2$. Moreover, set $\eta_j^o = \int_{I_j} \sqrt{p_0}(x)dx \asymp 1/k $, then 
 \begin{equation*}
\begin{split}
  h^2(p_0, p_\theta) &\geq \sum_j \int_{I_j} ( \sqrt{p_0}(x) - k\eta_j^o )^2  dx  
   =  \sum_j \int_{I_j} \frac{( p_0(x) - k^2(\eta_j^o)^2 )^2}{ (\sqrt{p_0}(x) + k\eta_j^o)^2}   dx\\
    & \geq  \frac{1}{ 2C_0}   \sum_j \int_{I_j} ( p_0(x) - k^2(\eta_j^o)^2 )^2  dx \geq \frac{1}{ 2C_0} \|p_0 - p_{\theta_{[k]}^o}\|_2^2, 
 \end{split}
 \end{equation*}
hence  $b(k) \asymp  \|p_0 - p_{\theta_{[k]}^o}\|_2^2$. 
\end{proof}

 \begin{lemma}
 \label{lem:Variance:histo}
 If $k \leq \sqrt{n}$ then 
$$
V_{p_0}^{(n)}\left( \sum_{j=1}^k(n_j-n\theta_{j}^o)^2\right) \lesssim \frac{ n^2 }{ k }.
$$
 
 \end{lemma}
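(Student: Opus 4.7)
The plan is to compute the variance directly by exploiting the iid representation $n_j - n\theta_j^o = \sum_{i=1}^n Z_{ij}$, with $Z_{ij} = \mathbf{1}_{Y_i\in I_j} - \theta_j^o$ (so the $Z_{i\cdot}$ are iid in $i$, centered, and satisfy $Z_{ij}Z_{ij'} \cdot \mathbf{1}_{Y_i\in I_j}\mathbf{1}_{Y_i\in I_{j'}}=0$ for $j\neq j'$). Write $Y_j := n_j - n\theta_j^o = \sum_i Z_{ij}$ and $p_j := \theta_j^o$, so that $c_0/k \leq p_j \leq C_0/k$. Decompose
$$ V\!\Big(\textstyle\sum_j Y_j^2\Big) \;=\; \sum_{j=1}^k V(Y_j^2) \;+\; \sum_{j\neq j'} \operatorname{Cov}(Y_j^2, Y_{j'}^2). $$

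For the diagonal part, each $Y_j$ is a centered $\mathrm{Bin}(n,p_j)$, so the standard formula for the fourth central moment gives $V(Y_j^2) = 2n^2 p_j^2(1-p_j)^2 + np_j(1-p_j)(1-6p_j(1-p_j)) \lesssim n^2 p_j^2 + np_j$. Summing over $j$ and using $p_j\asymp 1/k$ yields $\sum_j V(Y_j^2) \lesssim n^2/k + n$.

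For the covariances, expand $Y_j^2 Y_{j'}^2 = \sum_{a,b,c,d} Z_{aj}Z_{bj}Z_{cj'}Z_{dj'}$ and take expectation. By independence across $i$ and the fact that $E Z_{ij}=0$, only 4-tuples whose induced set partition of $\{a,b,c,d\}$ contains no singleton contribute; these are $\{a{=}b{=}c{=}d\}$ and the three pair partitions $\{ab|cd\},\{ac|bd\},\{ad|bc\}$. Combined with $EY_j^2 EY_{j'}^2 = n^2 p_j(1-p_j)p_{j'}(1-p_{j'})$, this produces
$$\operatorname{Cov}(Y_j^2, Y_{j'}^2) \;=\; nE(Z_{1j}^2 Z_{1j'}^2) \;-\; np_j(1-p_j)p_{j'}(1-p_{j'}) \;+\; 2n(n-1)p_j^2 p_{j'}^2.$$
Using the disjointness of $I_j, I_{j'}$ one checks $E(Z_{1j}Z_{1j'}) = -p_jp_{j'}$ and $E(Z_{1j}^2 Z_{1j'}^2) = p_j(1-p_j)^2 p_{j'}^2 + p_{j'}(1-p_{j'})^2 p_j^2 + (1-p_j-p_{j'})p_j^2 p_{j'}^2 \lesssim p_jp_{j'}(p_j+p_{j'}) \lesssim 1/k^3$. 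Hence $|\operatorname{Cov}(Y_j^2, Y_{j'}^2)| \lesssim n/k^3 + n/k^2 + n^2/k^4$, and summing over the $\lesssim k^2$ pairs gives $\sum_{j\neq j'}|\operatorname{Cov}(Y_j^2, Y_{j'}^2)| \lesssim n^2/k^2 + n + n/k$.

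Adding the two contributions produces $V(\sum_j Y_j^2) \lesssim n^2/k + n^2/k^2 + n$, and since $k\leq \sqrt n$ implies $n \leq n^2/k$ (and $n^2/k^2 \leq n^2/k$), this collapses to the claimed bound $n^2/k$. The only mildly delicate step is the bookkeeping of the mixed-index moments $E(Z_{1j}^2 Z_{1j'}^2)$ and the identification of the surviving partitions; after that it is pure arithmetic in the orders of $p_j\asymp 1/k$.
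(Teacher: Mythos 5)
Your proof is correct and follows essentially the same route as the paper: expand the second moment of $\sum_j(n_j-n\theta_j^o)^2$ over quadruples of observation indices, use centering to kill partitions with singletons, and bound the surviving terms using $\theta_j^o\asymp 1/k$, yielding $n^2/k+n^2/k^2+n\lesssim n^2/k$. If anything, your variance-plus-covariance bookkeeping (with the binomial fourth moment and the explicit mixed moments $E(Z_{1j}^2Z_{1j'}^2)$, $E(Z_{1j}Z_{1j'})=-\theta_j^o\theta_{j'}^o$) is more complete than the paper's displayed identity, which silently absorbs the negligible off-diagonal cross terms into the final $\lesssim$.
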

 
\begin{proof}
The variance term in the statement is equal to
 \begin{equation*}
 \begin{split}
 \sum_{j_1,j_2\leq k}& \sum_{i_1,..., i_4\leq n} E_{p_0}^{(n)}\left(\prod_{l=1}^2(\1_{X_{i_l}\in I_{j_1}} - \theta_{j_1}^o) \prod_{l=3}^4(\1_{X_{i_l}\in I_{j_2}} - \theta_{j_2}^o)\right) \\
 & \qquad - n^2\sum_{j_1,j_2=1}^k\theta_{j_1}^o(1-\theta_{j_1}^o)\theta_{j_2}^o(1-\theta_{j_2}^o)\\
 &=  \sum_{j_1,j_2\leq k}\theta_{j_1}^o(1-\theta_{j_1}^o)\theta_{j_2}^o(1-\theta_{j_2}^o) \left(n(n-1) - n^2\right) \\
 & \qquad   +n \sum_{j=1}^k E_{p_0}^{(n)}\left((\1_{X_{i}\in I_j}- \theta_{j}^o)^4\right)+  2 n(n-1) \sum_{j=1}^k\left(\theta_{j}^o(1-\theta_{j}^o)\right)^2\\
&\lesssim \frac{ n^2}{ k } + n \lesssim  \frac{ n^2}{ k }.
 \end{split}
 \end{equation*}
 \end{proof}

\subsection{Proof of Proposition \ref{prop: loglin}}\label{sec: proof_loglin} 
We need to verify assumptions \textbf{A1}-\textbf{A4} (with \textbf{A2} (iiib)). 
Before that we note that we take $d(.,.)$ to be the Hellinger distance $h(.,.)$ and we
choose
$$\Theta_n(k) = \{ \theta \in \RR^k ; \|\theta\|_2 \leq R_n(k)\}, \quad R_n(k) = C_1 \sqrt{k} (n\epsilon_n^2)^{1/q},$$
for some large enough $C_1>0$ and $q$ given in \eqref{eq: sieve_prior}. Define $\theta_{[k]}^o$ to be the Kullback-Leibler projection of $\theta_0$ onto $\Theta(k)$, i.e.
\begin{align}
\theta_{[k]}^o=\arg\inf_{\theta\in\Theta(k)}KL(\theta_0,\theta),\label{def: loglin:project}
\end{align}
 which exists and is unique by convexity of $\theta \rightarrow KL(\theta_0,\theta)$. Denote also $\theta_{0,[k]} = (\theta_{0,1}, ... , \theta_{0,k})$  
and let $\bar K_n \leq n^{1/2-\epsilon}$ for an arbitrarily small $\epsilon>0$. Note that since $f_0 \in \mathcal S^{\beta_0}(L)$ with 
$\beta_0 >1/2$, $k_n \lesssim (n/\log n)^{1/(2\beta_0+1)} \leq \bar K_n$, choosing $\eps$ small enough.
Condition \textbf{A1} is verified in the proof of Condition (C) of \cite{rivoirard:rousseau:12}.

The proof of assumption \textbf{A2} (i) is given in the proof of Proposition \ref{sec: proof_reg}, while for Hellinger tests satisfying \textbf{A2} (ii) we refer for instance to \cite{ghosal:ghosh:vdv:00}. To prove \textbf{A2} (iiib) we need to construct a covering of 
$$\bar \Theta_n(k)  = \Theta_n(k) \cap \{ \theta, h(f_0, f_\theta) \geq J_0(k) \epsilon_n \}.$$
Define 
 \begin{align}
B_{n,j}(k)=\Theta_n(k)\cap\{j\eps_n\leq \|\theta-\theta_{0}\|_2\leq (j+1)\eps_n \},\label{eq: loglin_Bnj}
\end{align}
with the notation $\|\theta-\theta_0\|_2^2=\sum_{i=1}^k(\theta_i-\theta_{0,i})^2+\sum_{i=k+1}^{\infty}\theta_{0,i}^2$ for $\theta\in\Theta(k)$, $k\leq\bar{K}_n$. If $\theta\in B_{n,j}(k)$ with $j\leq J_n:= J_1\sqrt{n} / \sqrt{k k_n \log n} $ and arbitrary $J_1>0$, then $\|\theta-\theta_0\|_2\lesssim 1/\sqrt{k}$ and therefore in view of Lemma \ref{lem: help_loglin_1}, $\|\theta - \theta_0\|_2 \asymp h(f_0, f_\theta)$. As a consequence condition   \eqref{cond: A4k'_1}  is satisfied with $c(k,j) = c j$ for some $c>0$ and
\begin{align*}
\log N( c_6c(k,j)\eps_n,   B_{n,j}(k), h(. , . ) ) &\leq\log N\big( \delta j \epsilon_n,   B_{n,j}(k), \| . \|_2\big) \\
&\leq C k = o(j^2 n\epsilon_n^2),
\end{align*}
for some sufficiently small $\delta>0$ resulting in condition \eqref{cond: A4k'_3}, for $j \leq J_n$. 

For $j>J_n$ define $\bar B_{n,J_n+1}(k) = \cup_{j>J_n} B_{n,j}(k)$. Since $\|\theta-\theta_0\|_2\gtrsim  1/\sqrt{k}$ for $\theta\in \bar{B}_{n,J_{n+1}}(k)$, note that in view of assertions (17) and (18) of \cite{rivoirard:rousseau:12} we have that
\begin{align*}
\|\theta-\theta_0\|_2^2\lesssim V(\theta_0,\theta)\lesssim h^{2}(f_\theta,f_{\theta_0})\big( k \|\theta-\theta_0\|_2^2+\log^2h(f_\theta,f_{\theta_0}) \big).
\end{align*}
Therefore, $h(f_\theta,f_{\theta_0})\gtrsim k^{-1/2}/\log n$ and $\theta\in \{h(f_{\theta},f_{\theta_0})>c(k,J_n+1)\eps_n\}$ holds for $c(k, J_n+1)= c k^{-1/2}\epsilon_n^{-1}/\log n$, for some sufficiently small constant $c>0$, hence condition \eqref{cond: A4k'_1} is verified. 
The entropy condition will follow from the second assertion of Lemma \ref{lem: help_loglin_1} 
\begin{equation*}
\begin{split}
  \log N\big( &c_6 c(k,J_n+1)\eps_n,  \bar B_{n,J_n+1}(k), h(. , . ) \big) \\
 &\leq  \log N\big(  (c_6/C) c(k,J_n+1)\eps_n/\sqrt{k}, \Theta_n(k), \| . \|_2\big)\\
 &\leq k\log (C R_n(k)\sqrt{k n}) \leq C' k \log n
\end{split}
\end{equation*}
for some $C'>0$.  Since $nc(k,J_n+1)^2 \eps_n^2  = c^2 n/(k \log n^2)$ and $k\leq \bar K_n \leq n^{1/2-\eps}$ for some $\eps>0$, we have $k\log n = o(nc(k,J_n+1)^2 \eps_n^2)$ verifying condition \eqref{cond: A4k'_3}, for $j = J_n+1$. Finally condition \eqref{cond: A4k'_2} is verified, noting that for all $k \leq \bar K_n$ 
\begin{align*}
\sum_{j \leq J_n }&e^{- c_5c(k,j)^2 n\epsilon_n^2/2}  + e^{-c_5c(k,J_n+1)^2 n\epsilon_n^2 /2} \\
&\leq 
\sum_{j \leq J_n} e^{-c_5c^2nj^2 \epsilon_n^2/2} + e^{ -  c_5 J_0^2 k^{-1}n/\log^2 n}
  \leq 3 e^{ -  c_5 J_0^2 k_n \log n/2},
\end{align*}
since $kk_n (\log n)^3\leq \bar{K}_n^2\log^3 n=o(n)$.

Next we deal with condition \textbf{A3}. Note that in view of Lemma  \ref{lem: help_loglin_1}  we have that $h(f_{\theta},f_{\theta_0})\asymp \|\theta-\theta_0\|_2$ over $\theta\in B_k( \theta_0, CJ_1\sqrt{k\log n /n} , \| . \|_2)\subset B_k( \theta_0, 1/\sqrt{k} , \| . \|_2)$, for arbitrary $C>0$, when $n$ is large enough. Furthermore, recall from the proof of condition \textbf{A2} (iiib) that
the inequality $\|\theta - \theta_0\|_2 > 1/\sqrt{k}$ implies $\sqrt{k\log n /n}\ll h(f_0, f_\theta)$. Therefore
$$B_k(\theta_0, J_1\sqrt{k\log n /n} ,h ) \subset B_k( \theta_0, CJ_1\sqrt{k\log n /n} , \| . \|_2),\quad  k \leq \bar K_n$$
and the proof of condition \textbf{A3} follows from the proof of Proposition \ref{prop: reg} (see assertion \eqref{eq: condA4:reg} and the argument below).

Then we verify condition \textbf{A4} (i). From \eqref{eq: help:loglike:loglin} and \eqref{hell-L2} we have that 
$$ E_{f_0}^{(n)}\log \frac{f_{\theta_{[k]}^o}}{f_{\theta}} \asymp  \|\theta  -\theta_{[k]}^o\|_2^2 \lesssim h^2(f_{\theta_{[k]}^o}, f_\theta) \leq C k/n$$
for some $C>0$ and all $\theta \in B_k(\theta_{[k]}^o, \sqrt{k/n},h)$, $k\in\mathcal{K}_n(M)$.  Denote  $\Phi(\textbf{Y}) = (\sum_{i=1}^n \phi_j(Y_i), j=1, ... , k)^T$. Then we also have in view of \eqref{eq: UB:normconst} and Lemma \ref{lem: help_loglin_2} that
\begin{align*}
V_{f_0}^{(n)} \log \frac{f_{\theta_{[k]}^o}}{f_{\theta}}  &\leq 2 n  \Big(E_{f_0}[(\theta - \theta_{[k]}^o)^T\Phi(Y_1)]^2+E_{f_0}[c(\theta)-c(\theta_{[k]}^o)]^2  \Big)\\
&\leq 2\big(\|f_{\theta_0}\|_\infty+\|f_{\theta_{[k]}^o}\|_\infty+C \big)\|\theta-\theta_{[k]}^o\|_2^2\lesssim k/n
\end{align*}
so \textbf{A4} (i) holds.

Next we verify  condition \textbf{A4} (ii).  Let $k \in \mathcal K_n(M)$.  By Cauchy-Schwarz inequality we get that
\begin{align*}
 \Big|\ell_n(\theta_{[k]}^o)-\ell_n(\theta)-nE_{f_0}^{(n)} \log \frac{f_{\theta_{[k]}^o}}{f_{\theta}}\Big|
&= \Big|(\theta_{[k]}^o-\theta)^T \big(\Phi(\textbf{Y})-E_{f_{\theta_0}}^{(n)}\Phi(\textbf{Y})\big)\Big|\\
&\leq \|\theta-\theta_{[k]}^o\|_2\|\Phi(\textbf{Y})-E_{f_{\theta_0}}^{(n)}\Phi(\textbf{Y})\|_2.
\end{align*}
Also note that
\begin{align*}
 E_{f_0}^{(n)}\|\Phi(\textbf{Y})-E_{f_0}^{(n)}\Phi(\textbf{Y})\|_2^2
=tr \Var_{f_0}^{(n)} \Phi(\textbf{Y}) \leq  kn\|f_{\theta_0}\|_{\infty}
\end{align*}
and by Markov's inequality  $\{ \|\Phi(\textbf{Y})-E_{f_0}^{(n)}\Phi(\textbf{Y})\|_2 \leq  \sqrt{kn\|f_{0}\|_{\infty}/\epsilon} \}$ holds with probability greater than $1-\epsilon$.
Therefore in view of Lemma \ref{lem: equiv:metric:loglin}
\begin{equation*}
\begin{split}
\ell_n(\theta) - \ell_n(\theta_{[k]}^o)& \leq \|\theta-\theta_{[k]}^o\|_2\sqrt{kn\|f_{0}\|_{\infty}/ \epsilon}  -   nc_0 \|\theta - \theta_{[k]}^o\|_2^2\\
& \leq \|\theta-\theta_{[k]}^o\|_2\sqrt{kn} \left( \sqrt{\frac{ \|f_0\|_\infty }{\epsilon} } - \frac{ \sqrt{n} \|\theta - \theta_{[k]}^o\|_2c_0}{ \sqrt{k}} \right) \leq \frac{ k \|f_0\|_\infty }{ c_0\epsilon },
\end{split}
\end{equation*}
holds with probability greater than $1-\eps$.

Finally, to prove \textbf{A4} (iii), first note that (similarly to the proof of condition \textbf{A3}) in view of Lemma \ref{lem: help_loglin_1}, for $k\in \mathcal K_n(M)$, if $\tilde\theta \in \Theta(k)$ satisfies $h(f_{\tilde\theta}, f_{\theta_0}) \leq (M_\epsilon+1)\epsilon_n$ then $\|\tilde\theta-\theta_0\|_2\lesssim \epsilon_n$ and therefore 
$$\|\tilde\theta-\theta_0\|_1\leq \|\tilde\theta-\theta_{0,[k]}\|_1+\|\theta_0-\theta_{0,[k]}\|_1\leq \sqrt{k}\|\tilde\theta-\theta_0\|_2+O(1)=O(1).$$ 
Hence for all $\theta \in B_k(\tilde\theta, \delta_{n,k} \sqrt{k/n} , h) $ again in view of Lemma \ref{lem: help_loglin_1}
$$h(f_{\tilde\theta}, f_\theta) \asymp \| \tilde\theta - \theta \|_2.$$
This is in particularly true for $\tilde\theta = \theta_{[k]}^o$ and we can bound 
\begin{equation*}
\frac{ \pi_{|k}\left( B_k(\tilde\theta,\delta_{n,k}\sqrt{k/n},h)\right) }{\pi_{|k}\left( B_k(\theta_{[k]}^o,\sqrt{k/n},h) \right)} \leq \frac{ \pi_{|k}\left( B_k(\tilde\theta,C_1\delta_{n,k}\sqrt{k/n},\| . \|_2\right) }{\pi_{|k}\left( B_k(\theta_{[k]}^o,C_2\sqrt{k/n}, \| . \|_2)\right)},
\end{equation*}
for some positive constants $C_1, C_2$. Moreover for all $\theta \in B_k(\tilde\theta,C_1\delta_{n,k}\sqrt{k/n},\| . \|_2)$ 
 $\|\theta\|_1 \leq \|\tilde\theta\|_1 + o(1)\leq \|\tilde\theta-\theta_0\|_1+\|\theta_0\|_1+O(1) \leq  \| \theta_0\|_1+O(1)$ and 
 $$G_1^k e^{-G_2 \sum_{j=1}^k |\theta_j|^q} \leq \prod_jg(\theta_j) \leq G_3^k \quad\text{where}\quad \sum_{j=1}^k |\theta_j|^q \leq \|\theta\|_1^q k^{(1-q)_+}.$$
 We thus obtain that 
\begin{align*}
\frac{ \pi_{|k}\left( B_k(\tilde\theta,\delta_{n,k}\sqrt{k/n},h)\right) }{\pi_{|k}\left( B_k(\theta_{[k]}^o,\sqrt{k/n},h) \right)} &\leq  e^{k \big(2G_2\|\theta_0\|_1^q+\log(G_3/G_1) \big)}\frac{\mbox{Vol}\big(B_k(\tilde\theta,C_1\delta_{n,k}\sqrt{k/n},\|.\|_2)\big)}{\mbox{Vol}\big(B_k(\theta_{[k]}^o,C_2\sqrt{k/n},\|.\|_2) \big)}\\
& \leq e^{ k \log (\delta_{n,k})/2}
\end{align*}
 as soon as $\delta_{n,k}$ is small enough.

We finally verify the second statement of  Proposition \ref{prop: loglin}. Following Corollary \ref{cor:radius} it is sufficient to show that for $\theta_0\in S^{\beta}(L)$ we have $\eps_n\lesssim  (n/\log n)^{-\beta/(1+2\beta)}$. Note that in view of \eqref{hell-L2} in Lemma \ref{lem: help_loglin_1} and Lemma \ref{lem: help_loglin_2}
\begin{align*}
h^2(f_{\theta_0},f_{\theta_{[k]}^o})&\lesssim e^{c_1 (\|\theta_0\|_1+ \|\theta_0  -\theta_{[k]}^o\|_1)}\|\theta_0 -\theta_{[k]}^o\|_2^2\\
&\lesssim e^{c_1  (\|\theta_0\|_1 +\|\theta_0  -\theta_{0,[k]}\|_1+ \|\theta_{0,[k]}  -\theta_{[k]}^o\|_1)}(\|\theta_0 -\theta_{0,[k]}\|_2^2+\|\theta_{0,[k]}  -\theta_{[k]}^o\|_2^2)\\
&\lesssim k^{-2\beta}\sum_{i=k+1}^{\infty}\theta_{0,i}^2i^{2\beta}\lesssim k^{-2\beta}.
\end{align*}
 Hence by choosing $\bar{k}_n=C(n/\log n)^{1/(1+2\beta)}$ we get that $b(\bar{k}_n)<\bar{k}_n(\log n)/n$ for sufficiently large $C>0$ and therefore $k_n\leq\bar{k}_n$. We conclude the proof by noting that
 $$\eps_n\leq 2\sqrt{k_n(\log n)/n}\leq   2\sqrt{\bar{k}_n(\log n)/n}\lesssim (n/\log n)^{-\beta/(1+2\beta)}.$$

\subsubsection{Technical Lemmas}
\begin{lemma}\label{lem: help_loglin_1}
Over $B_k(\theta_0,c/\sqrt{k},\|.\|_2)$, where $c>0$ is arbitrary and $\|\theta_0\|_1=O(1)$, we have that
\begin{align*}
h(f_\theta,f_{\theta_0})\asymp \|\theta-\theta_0\|_2.
\end{align*}
Furthermore, for any $\theta,\theta'\in\Theta(k)$, $\|\theta-\theta'\|_2\leq \delta/\sqrt{k}$, with some sufficiently small $\delta>0$ we have 
\begin{align*}
h(f_{\theta},f_{\theta'})\leq C\sqrt{k}\|\theta-\theta'\|_2,
\end{align*}
for some universal constant $C>0$.
\end{lemma}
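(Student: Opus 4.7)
The first assertion is a standard local equivalence statement for exponential families: the approach is to (i) pinch $f_\theta/f_{\theta_0}$ between two positive constants uniformly on $B_k(\theta_0,c/\sqrt{k},\|\cdot\|_2)$, (ii) deduce $h^2(f_\theta,f_{\theta_0})\asymp KL(f_{\theta_0},f_\theta)$, and (iii) expand the log-partition function to second order. For (i), Cauchy--Schwarz gives $\|\theta-\theta_0\|_1\le \sqrt{k}\|\theta-\theta_0\|_2\le c$; combined with $\|\theta_0\|_1=O(1)$ and the standing boundedness $\max_j\|\phi_j\|_\infty\le M$ of the basis, both $\sum_j(\theta_j-\theta_{0,j})\phi_j(x)$ and $c(\theta)-c(\theta_0)=\log\int f_{\theta_0}e^{(\theta-\theta_0)^T\phi}$ are uniformly bounded, so $f_\theta/f_{\theta_0}\in[\underline c,\overline c]$ with positive constants. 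For (iii), Taylor's formula yields $KL(f_{\theta_0},f_\theta)=\tfrac{1}{2}(\theta-\theta_0)^T\nabla^2 c(\tilde\theta)(\theta-\theta_0)$ with $\nabla^2 c(\tilde\theta)=\mathrm{Cov}_{\tilde\theta}(\phi)$ for some $\tilde\theta$ on the segment joining $\theta_0$ and $\theta$. Since $f_{\tilde\theta}$ is bounded away from $0$ and $\infty$ by the same argument as in (i), the orthonormality of $(\phi_j)_{j\ge 1}$ gives $\underline c_0 I_k\preceq \mathrm{Cov}_{\tilde\theta}(\phi)\preceq \overline c_1 I_k$ with constants independent of $k$, yielding $h(f_\theta,f_{\theta_0})\asymp\|\theta-\theta_0\|_2$.

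For the second assertion we cannot bound $\|\theta\|_1$ or $\|\theta'\|_1$ individually, but the crucial observation is that $\log(f_\theta/f_{\theta'})$ depends on $\theta,\theta'$ only through $\theta-\theta'$. Setting $v(x)=(\theta-\theta')^T\phi(x)$ and $u=v-(c(\theta)-c(\theta'))$, Cauchy--Schwarz and the bounded basis give $\|v\|_\infty\le M\sqrt{k}\|\theta-\theta'\|_2\le M\delta$. Since $c(\theta)-c(\theta')=\log E_{\theta'}e^v$ automatically lies in $[\inf v,\sup v]$, we also have $|c(\theta)-c(\theta')|\le\|v\|_\infty$, hence $\|u\|_\infty\le 2M\delta$. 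Starting from $(\sqrt{f_\theta}-\sqrt{f_{\theta'}})^2=f_{\theta'}(e^{u/2}-1)^2$ and the elementary inequality $(e^{u/2}-1)^2\le (u/2)^2 e^{|u|}$, we obtain
\[
 h^2(f_\theta,f_{\theta'})\le \tfrac{e^{2M\delta}}{4}\,E_{\theta'}[u^2].
\]
A second-order expansion $c(\theta)-c(\theta')=E_{\theta'}v+O(E_{\theta'}v^2)$, valid since $\|v\|_\infty$ is small, gives $E_{\theta'}u^2 \le V_{\theta'}(v)+O((E_{\theta'}v^2)^2)$, while the crude pointwise bound $E_{\theta'}v^2\le\|v\|_\infty^2\le M^2 k\|\theta-\theta'\|_2^2$ supplies the desired factor of $k$. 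For $\delta$ small the quadratic remainder is dominated by the main term, so $h^2(f_\theta,f_{\theta'})\lesssim k\|\theta-\theta'\|_2^2$.

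The main obstacle is the second assertion, where one must navigate the absence of any control on the individual $\ell_1$-norms of $\theta,\theta'$. This is resolved by noting that only the difference $\theta-\theta'$ enters the exponent, and by leveraging the constraint $\|\theta-\theta'\|_2\le\delta/\sqrt{k}$ twice: once to bound the log-ratio in $L^\infty$ (which legitimizes the Hellinger-to-$L^2$ linearization), and once, via Cauchy--Schwarz, to convert the $L^\infty$-bound on $v$ into the quadratic bound $E_{\theta'}v^2\le M^2 k\|\theta-\theta'\|_2^2$ that supplies the $\sqrt{k}$ factor in the final estimate.
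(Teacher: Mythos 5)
Your proof is correct, but it follows a genuinely different and more self-contained route than the paper. For the first assertion the paper proves a global two-sided bound $C_A^{-1}e^{-\tilde c(\|\theta_1\|_1+\|\theta_1-\theta_2\|_1)}\|\theta_1-\theta_2\|_2^2\leq h^2(f_{\theta_1},f_{\theta_2})\leq C_Ae^{\tilde c(\|\theta_1\|_1+\|\theta_1-\theta_2\|_1)}\|\theta_1-\theta_2\|_2^2$, importing the lower bound from Lemma F.1 of an earlier supplement and getting the upper bound from the pointwise inequality $|e^v-e^w|\leq e^ve^{|w-v|}|w-v|$ together with a bound on $|c(\theta_1)-c(\theta_2)|$, and then specializes using $\ell_1$-control on the ball; you instead pinch the likelihood ratio, use $h^2\asymp KL$, and Taylor-expand the log-partition function, bounding the Hessian $\mathrm{Cov}_{\tilde\theta}(\Phi)$ above and below by multiples of the identity — which is exactly the covariance bound the paper proves separately (display \eqref{cov:lb} in Lemma \ref{lem: help_loglin_2}, where the centering is handled via $\int\phi_j=0$; you should cite or reproduce that step rather than attributing it to orthonormality alone, and note that the quadratic form here runs over the infinite support of $\theta-\theta_0$, to which the same computation applies). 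For the second assertion the paper simply cites equation (8) of Rivoirard and Rousseau (2012) and then uses $\|\theta-\theta'\|_1\leq\sqrt k\|\theta-\theta'\|_2$, whereas you rederive that inequality from scratch via $h^2(f_\theta,f_{\theta'})=E_{\theta'}(e^{u/2}-1)^2\leq \tfrac14 e^{\|u\|_\infty}E_{\theta'}[u^2]$ with $\|u\|_\infty\leq 2\|v\|_\infty\leq 2M\sqrt k\|\theta-\theta'\|_2$; in fact the crude bound $E_{\theta'}[u^2]\leq\|u\|_\infty^2$ already finishes it, so your detour through the variance expansion is unnecessary but harmless. One small correction: since $\theta_0$ is infinite-dimensional, Cauchy--Schwarz does not give $\|\theta-\theta_0\|_1\leq\sqrt k\|\theta-\theta_0\|_2$ directly; you must split off the tail as the paper does, $\|\theta-\theta_0\|_1\leq\sqrt k\|\theta-\theta_{0,[k]}\|_2+\|\theta_0\|_1=O(1)$, which is consistent with your use of $\|\theta_0\|_1=O(1)$. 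What each approach buys: the paper's explicit exponential-in-$\ell_1$ bounds \eqref{hell-L2} are reused elsewhere (e.g.\ in bounding $h^2(f_{\theta_0},f_{\theta_{[k]}^o})$ for the size statement of Proposition \ref{prop: loglin}), while your argument avoids the external citations and yields the local equivalence with minimal machinery.
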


\begin{proof}
First we deal with the first statement. We show below that there exists $\tilde{c}>0$ such that for all $A>0$,  there exists $C_A>0$ such that for any $\theta_1,\theta_2\in\ell_2 \cap \ell_1$ satisfying $\|\theta_1 - \theta_2\|_2 \leq A$ we have
\begin{equation}\label{hell-L2}
\begin{split}
h^2(f_{\theta_1}, f_{\theta_2}) &\leq C_A e^{\tilde{c}(\|\theta_1\|_1+  \|\theta_1  -\theta_2\|_1)}\|\theta_1 -\theta_2\|_2^2,\\
h^2(f_{\theta_1}, f_{\theta_2}) & \geq C_A^{-1} e^{- \tilde{c}  (\|\theta_1\|_1+  \|\theta_1  -\theta_2\|_1)}\|\theta_1 -\theta_2\|_2^2.
\end{split}
\end{equation}
Then the first statement of the lemma simply follows by noting that for $\theta\in B_k(\theta_0,c/\sqrt{k},\|.\|_2)$
$$\|\theta-\theta_0\|_1\leq \|\theta_0\|_1+\|\theta-\theta_{0,[k]}\|_1\leq   \|\theta_0\|_1+\sqrt{k}\|\theta-\theta_{0,[k]}\|_2=O(1)$$
and $\|\theta\|_1\leq\|\theta_0\|_1+\|\theta_0-\theta\|_1$,
where we use the following (slight abusement) of our notation $\|\theta-\theta_{0}\|_1=\sum_{i=1}^k|\theta_i-\theta_{0,i}|+\sum_{i> k}|\theta_{0,i}|$.

The lower bound in \eqref{hell-L2} is given in Lemma F.1 of \cite{rousseau:szabo:15:supp}. For the upper bound in \eqref{hell-L2} we use similar computations. Using the inequality $|e^{v}-e^w|= e^v|1-e^{w-v}|\leq e^v e^{|w-v|}|w-v|$ we get that
\begin{align*}
 h^2(f_{\theta_2}, f_{\theta_1}) &\leq \int f_{\theta_1} e^{|(\theta_2-\theta_1)^T\Phi(x) - c(\theta_2) + c(\theta_1)|}  \big((\theta_2-\theta_1)^T\Phi(x) - c(\theta_2) + c(\theta_1)\big)^2dx.
\end{align*}
Furthermore note that the following inequalities hold $\|(\theta_2-\theta_1)^T\Phi(x)\|_\infty\leq \|\theta_2-\theta_1\|_1 \|\Phi(x)\|_\infty$, $e^{|c(\theta_1)-c(\theta_2)|}\leq e^{\|\theta_1-\theta_2\|_1 \|\Phi\|_\infty}$, $\|f_{\theta_1}\|_\infty\lesssim e^{\|\theta_1\|_1\|\Phi\|_{\infty}}$ and
\begin{align}
|c(\theta_1)-c(\theta_2)|&=\log \int f_{\theta_1}(x) e^{(\theta_2-\theta_1)^T \Phi(x)}dx\nonumber\\
& \lesssim   \|f_{\theta_1}\|_\infty  \|\theta_1-\theta_2\|_2+ O( \|\theta_1-\theta_2\|_2^2),\label{eq: UB:normconst}
\end{align}
where the last display follows from the Taylor expansion of the functions $\log(1+x)$ and $e^x$ around zero (see also the first display after (F.2) in \cite{rousseau:szabo:15:supp}).
The proof of the statement concludes by noting that the bases $\phi_1,...,\phi_k$ are orthogonal.

For the second statement of the lemma  we note that following from equation (8) of \cite{rivoirard:rousseau:12} for $\|\theta - \theta'\|_2 \leq  \delta/\sqrt{k}$ (for some sufficiently small $\delta>0$),
\begin{align*}
h(f_\theta, f_{\theta'}) \leq 4\|\sum_{j=1}^k (\theta_j-\theta_j')\phi_j\|_\infty^2
\leq 4 \|\theta-\theta'\|_1\max_{j=1,..,k}\|\phi_j\|_{\infty}\leq C\sqrt{k}\|\theta-\theta'\|_2.
\end{align*}
\end{proof}

\begin{lemma}\label{lem: help_loglin_2}
For all $c\leq f\leq C $ and all $k \geq 1$, the matrices 
\begin{align*} 
\bar \Gamma(i,j)= E_f\phi_i(X)\phi_j(X),\quad \Gamma(i,j) = \bar \Gamma(i,j)-E_f\phi_i(X) E_f\phi_j(X),
\end{align*}
 satisfy
  \begin{equation}\label{cov:lb}
c I_k\leq \Gamma \leq \bar\Gamma\leq CI_k.
   \end{equation}
\end{lemma}

\begin{proof}
Let $\tilde \phi_{j}  = \phi_j -  E_f(  \phi_{j}) $,  and recall that $0<c\leq f\leq C$ and $\int \phi_j=0$. Then we have for all $u \in \mathbb R^k$,
\begin{equation*}
\begin{split}
u^T \Gamma u &= E_f\left( (\sum_{j\leq k} u_j \tilde \phi_j)^2\right) \geq c \|\sum_{j\leq k} u_j \tilde \phi_j\|_2^2\\
& = c \|u\|_2^2 + c(\sum_{j=1}^kE_f(  \phi_{j}) u_j)^2 \geq  c\|u\|_2^2.
\end{split}
\end{equation*}
Similarly
\begin{align*}
u^T \bar\Gamma u =  E_f\left( (\sum_{j\leq k} u_j  \phi_j)^2\right) \leq C \|u\|_2^2.
\end{align*}
Also $\bar \Gamma = \Gamma + E_f(\Phi_k)E_f(\Phi_k)^T$, terminating the proof of \eqref{cov:lb}.
\end{proof}

\begin{lemma}\label{lem: equiv:metric:loglin}
Let $\theta_{[k]}^o $ be the Kullback-Leibler projection of $\theta_0$ onto $\Theta(k)$, given in \eqref{def: loglin:project}, then $\theta_{[k]}^o$ satisfies 
\begin{equation*}\label{carac:thetako}
E_{f_{\theta_0}}(\phi_j) = E_{f_{\theta_{[k]}^o}}(\phi_j) , \quad \forall j \leq k,\quad \mbox{and} \quad \theta_{[k]}^o = \theta_{0,[k]} + \delta 
\end{equation*}
with
\begin{equation*}
\|\delta\|_2^2 \leq C_1 \|\theta_0 - \theta_{0,[k]}\|_2^2, \quad \|\delta\|_1 \leq C_1 k \sqrt{(\log n)/n}
\end{equation*}
as soon as $k \in \mathcal K_n(M)$, where $C_1$ depends on $M$, $\|\theta_0\|_1$ and $\|\theta_0\|_2$. For $\theta\in B_k(\theta_0,C\eps_n,h)$ with arbitrary $C>0$ we have
\begin{align}
E_{f_{\theta_0}}^{(n)}\log \frac{f_{\theta_{[k]}^o}}{f_{\theta}} \asymp \|\theta-\theta_{[k]}^o\|_2^2.\label{eq: help:loglike:loglin}
\end{align}
\end{lemma}

\begin{proof}

For convenience we introduce the notations $E_0=E_{f_{\theta_0}}$ and $E_{\theta}=E_{f_{\theta}}$. Then by definition, $\theta_{[k]}^o$ satisfies 
\begin{equation}
0=\frac{ \partial \int f_0(x) \log f_\theta(x) dx   }{ \partial \theta_j }\vert_{\theta = \theta_{[k]}^o}   = -E_0( \phi_j) +E_{\theta_{[k]}^o}(\phi_j) .\label{eq: help_deriv}
\end{equation}
Write $\theta_{[k]}^o = \theta_{0,[k]} +\delta$ with $\theta_{0,[k]} = (\theta_{0,1}, ..., \theta_{0,k})$ where $\delta \in \mathbb R^k  $ and  $\Delta(x) = \sum_{j\leq k } \delta_j \phi_j(x)$. 
We have
\begin{equation}\label{KL1}
h^2(f_{\theta_0}, f_{\theta_{[k]}^o}) \leq KL(\theta_0, \theta_{[k]}^o) \leq KL(\theta_0, \theta_{0,[k]}) 
\end{equation}
and using a Taylor expansion of $e^{x}$ followed by a Taylor expansion of $\log (1 + x)$, combined with Lemma  \ref{lem: help_loglin_2}, we obtain that 
\begin{align*}
KL(\theta_0,& \theta_{0,[k]})  = \sum_{j=k+1}^\infty \theta_{0,j}  E_0( \phi_j) - \log E_0\left( e^{\sum_{j>k}\theta_{0,j}\phi_j}\right)\\
& = \sum_{j=k+1}^\infty \theta_{0,j}  E_0( \phi_j) - \log \Big( 1 + \sum_{j=k+1}^\infty \theta_{0,j}  E_0( \phi_j)  + \frac{(\sum_{j=k+1}^\infty \theta_{0,j}  E_0( \phi_j) )^2 }{2}\\
&\qquad\qquad+ o( \sum_{j=k+1}^\infty \theta_{0,j}^2) \Big) \\
&=\frac{ \sum_{j,j'=k+1}^\infty \theta_{0,j} \theta_{0,j'}E_0(\phi_j)E_0(\phi_{j'})}{2} + o(\|\theta_0 -\theta_{0,[k]}\|_2^2)\\
&\asymp \|\theta_0 -\theta_{0,[k]}\|_2^2 \asymp h^2(f_{\theta_0}, f_{\theta_{0,[k]}}) \lesssim \epsilon_n^2.
\end{align*}
Moreover using Lemma 3.1 of \cite{rivoirard:rousseau:12}
 $$h^2(f_{\theta_0}, f_{\theta_{[k]}^o}) \gtrsim \|\theta_0 - \theta_{[k]}^o \|_2^2 (\log n)^{-2}$$
 so that 
  $$\|\theta_0 - \theta_{[k]}^o \|_2^2\lesssim \epsilon_n^2(\log n)^2 = o(1/k_n) , \quad \|\theta_{0,[k]} - \theta_{[k]}^o \|_1 \leq \sqrt{k} \epsilon_n \log n = o(1).$$
We can then work similarly to $KL(\theta_0, \theta_{0,[k]}) $: let $\bar \Gamma_{0,k}(i,j) = E_0(\tilde \phi_i(X)\tilde \phi_j(X))$, $i,j\leq k$. 
\begin{align*}
KL(\theta_0,& \theta_{[k]}^o)  = \sum_{j=0}^\infty( \theta_{[k],j}^o -\theta_{0,j})  E_0( \phi_j) - \log E_0\left( e^{\sum_{j=0}^\infty( \theta_{[k],j}^o -\theta_{0,j})   \phi_j}\right)\\
&\asymp  \|\theta_0 -\theta_{[k]}^o\|_2^2.
\end{align*}
Hence 
$$\|\theta_0 -\theta_{[k]}^o\|_2^2  \lesssim  KL(\theta_0, \theta_{[k]}^o) \leq KL(\theta_0, \theta_{0,[k]}) \lesssim  \|\theta_0 -\theta_{0,[k]}\|_2^2 \leq \|\theta_0 -\theta_{[k]}^o\|_2^2$$ 
so that 
$$ \|\theta_{0} - \theta_{[k]}^o \|_2^2\asymp \|\theta_0- \theta_{0,[k]}\|_2^2.$$
Using 
$$\|\theta_{0} - \theta_{[k]}^o \|_2^2 = \|\theta_0- \theta_{0,[k]}\|_2^2+ \|\theta_{[k]}^o- \theta_{0,[k]}\|_2^2$$
we obtain that 
$$\|\delta\|_2^2= \|\theta_{[k]}^o- \theta_{0,[k]}\|_2^2 \lesssim \|\theta_0- \theta_{0,[k]}\|_2^2 \lesssim \epsilon_n^2 $$
which in turn implies that 
$$\|\delta\|_1 \leq \sqrt{k}\|\delta\|_2 \lesssim \sqrt{k} \epsilon_n \lesssim k \sqrt{(\log n)/n}. $$

We show below that
\begin{align}
\sup_{\theta\in \Theta_n(k)\cap B_k(\theta_0,C\eps_n,h) }\|\theta-\theta_{[k]}^o\|_2 \lesssim \epsilon_n\label{eq: dist:project:loglin}
\end{align}
and as a consequence we have $\|\theta-\theta_{[k]}^o\|_1\leq \sqrt{k}\|\theta-\theta_{[k]}^o\|_2=o(1)$.
 Next note that
\begin{align*}
 \frac{\int e^{\theta^T \Phi_k(x)}dx }{\int e^{(\theta_{[k]}^o)^T \Phi_k(x)}dx }=\frac{\int e^{(\theta-\theta_{[k]}^o)^T \Phi_k(x)} e^{(\theta_{[k]}^o)^T \Phi_k(x)}dx }{\int e^{(\theta_{[k]}^o)^T \Phi_k(x)}dx }=E_{f_{\theta_{[k]}^o}}( e^{(\theta - \theta_{[k]}^o)^T \Phi_k(Y_1)})
\end{align*}
and by Taylor series expansion of $e^{(\theta - \theta_{[k]}^o)^T \Phi_k(Y_1)}$ around zero
\begin{align*}
E_{f_{\theta_{[k]}^o}} e^{(\theta - \theta_{[k]}^o)^T \Phi_k(Y_1)}=&1+E_{f_{\theta_{[k]}^o}}^{(n)}(\theta - \theta_{[k]}^o)^T \Phi_k(Y_1)\\
&\qquad+\frac{ (\theta - \theta_{[k]}^o)^T    E_{f_{\theta_{[k]}^o}}\Phi_k(Y_1)\Phi_k(Y_1)^T(\theta - \theta_{[k]}^o)}{ 2 }\\
&\qquad+O(  \| \theta - \theta_{[k]}^o \|_1 \| \theta - \theta_{[k]}^o \|_2^2).
\end{align*}
Since $\log(1+x)=x-x^2/2+O(x^3)$ for small $x>0$ we get
\begin{align*}
\log E_{f_{\theta_{[k]}^o}} &e^{(\theta - \theta_{[k]}^o)^T \Phi_k(Y_1)}=  E_{f_{\theta_{[k]}^o}}(\theta - \theta_{[k]}^o)^T \Phi_k(Y_1)\\
&+\frac{ (\theta - \theta_{[k]}^o)^T  E_{f_{\theta_{[k]}^o}}\Phi_k(Y_1)\Phi_k(Y_1)^T(\theta - \theta_{[k]}^o)}{ 2 }
-\frac{\Big(E_{f_{\theta_{[k]}^o}}(\theta - \theta_{[k]}^o)^T \Phi_k(Y_1)\Big)^2}{2}\\
&  +o(   \| \theta - \theta_{[k]}^o \|_2^2).
\end{align*}
This leads to 
\begin{equation*}
\begin{split}
E_{f_{\theta_0}}\log \frac{f_{\theta_{[k]}^o}}{f_{\theta}} &= ( \theta_{[k]}^o-\theta)^T E_{f_{\theta_0}}(\Phi_k(Y_1)) + \log E_{f_{\theta_{[k]}^o}}( e^{(\theta - \theta_{[k]}^o)^T \Phi_k(Y_1)}) \\
& = \frac{ (\theta - \theta_{[k]}^o)^T   \text{Cov}_{f_{\theta_{[k]}^o}}( \Phi_k(Y_1))(\theta - \theta_{[k]}^o)}{ 2 }+ o(   \| \theta - \theta_{[k]}^o \|_2^2)\\
&\asymp  \|\theta - \theta_{[k]}^o\|_2^2,
\end{split}
\end{equation*}
 by Lemma \ref{lem: help_loglin_2},  concluding the proof of assertion \eqref{eq: help:loglike:loglin}.
 
To finish the proof of the lemma it remains to verify \eqref{eq: dist:project:loglin}. We have seen that  $\|\theta_{[k]}^o-\theta_{0,[k]}\|_2\leq C\|\theta_{0,[k]}-\theta_0\|_2$, therefore for any $\theta\in\Theta(k)$ 
$$\|\theta-\theta_{[k]}^o\|_2\leq \|\theta_{0,[k]}-\theta_{[k]}^o\|_2+\|\theta-\theta_{0,[k]}\|_2\leq
(1+C)\|\theta-\theta_0\|_2.$$
Combined with  $h(f_{\theta_0},f_{\theta})\asymp \|\theta-\theta_0\|_2$ (see Lemma \ref{lem: help_loglin_1}) this concludes the proof of \eqref{eq: dist:project:loglin} and as a consequence the lemma.


\end{proof}

\subsection{Proof of Proposition \ref{prop: classification}}\label{sec: proof_class}
As a first step we introduce the following notations which will be used throughout the whole proof. Let $\theta_{0,[k]}=(\theta_{0,1},...,\theta_{0,k})$ and $\theta_{[k]}^o=\arg\min_{\theta\in\Theta(k)} KL(\theta_0,\theta)$ denote the Kullback-Leibler projection of $\theta_0$ onto $\Theta(k)$. The corresponding binary regression function is given as
$$q_{\theta_{[k]}^o}(x) := \mu(f_{\theta_{[k]}^o})(x)= \frac{ e^{(\theta_{[k]}^o)^T \Phi_k(x)}}{ 1+ e^{(\theta_{[k]}^o)^T \Phi_k(x)}}.$$
For notational convenience we also introduce the abbreviations $q^o_i=q_{\theta_{[k]}^o}(x_i)$, $q_i=\mu(f_{\theta})(x_i)$, $q_{0,i}=\mu(f_{\theta_0})(x_i)$, and $q_{0[k],i}=\mu(f_{\theta_{0,[k]}})(x_i)$. Finally by slightly abusing our notations we write 
$$h_n^2(\theta,\tilde{\theta})=n^{-1}\sum_{i=1}^n \big( \sqrt{q_{\theta}(x_i)}- \sqrt{q_{\tilde\theta}(x_i)}\big)^2+\big( \sqrt{1-q_{\theta}(x_i)}- \sqrt{1-q_{\tilde\theta}(x_i)}\big)^2$$
 for the empirical Hellinger distance and $d_n^2(\theta,\theta_0)=n^{-1}\sum_{i=1}^n (f_{\theta}(x_i)-f_{\theta_0}(x_i))^2$ for the empirical $L_2$-norm. Similarly to the preceding sections we use the shorthand notation $\eps_n=\eps_n(k_n)$. Then the proof of the first assertion consists of verifying the conditions of Theorem \ref{th:coverage} and \ref{th:EmpBayesCoverage}.  

First we deal with condition \textbf{A1}. Note that in view of Lemma 3.2 of \cite{vvvz08} (with $G(x)=n^{-1}\sum_{i=1}^n 1_{x\leq x_i}$ and uniformly bounded $S$ in case of the logistic link function)
$$KL(\theta_0,\theta) \lesssim d_n^2(\theta,\theta_0),\quad V(\theta_0, \theta)\lesssim d_n^2(\theta,\theta_0).$$
Furthermore, by the mean value theorem, for all $\theta, \theta' \in \bar{\Theta}$ 
\begin{equation}
h_n(\theta, \theta')^2  = n^{-1}\sum_{i=1}^{n}(f_{\theta}(x_i)-f_{\theta'}(x_i))^2\frac{\mu'(\bar{f}(x_i))^2}{4\mu(\bar{f}(x_i))\big(1-\mu(\bar{f}(x_i))\big)},\label{eq: hellinger_L2}
\end{equation}
for some $\bar{f}(x_i)\in [f_{\theta}(x_i), f_{\theta'}(x_i)]$. In view of Lemma \ref{lem: class_Linf} we have over $\theta\in B_k(\theta_0,C\eps_n,h_n)$, $k\leq \bar{K}_n=o(n^{1/2})$ that $\|f_\theta\|_\infty=O(1)$ and as a consequence (in view of \eqref{eq: hellinger_L2})
\begin{align}
d_n(\theta,\theta_0)\asymp h_n(\theta,\theta_0).\label{eq: equiv:metric}
\end{align}
This holds in particular for $k=k_n$ as well. Therefore there exist large enough constants $c_{3},c_{4}>0$ such that 
\begin{align*}
\{\theta:\, KL(\theta_0,\theta) \leq c_{3}\eps_n^2,V(\theta_0,\theta) \leq c_{4}\eps_n^2\}\supset B_{k_n}(\theta_0,\eps_n,h_n).
\end{align*}
Taking any $\tilde\theta\in\Theta(k)$ satisfying $h_n^2(\tilde\theta,\theta_0)\leq b(k)+ k(\log n)/(4n)$ and $|\tilde\theta_j|=O(1)$ we get that
\begin{align*}
B_{k_n}(\theta_0&,\eps_n,h_n)\supset B_{k_n}(\tilde\theta, 0.5\sqrt{k_n(\log n)/n},h_n)\\
&\supset  B_{k_n}(\tilde\theta,c\sqrt{k_n(\log n)/n},d_n)\supset B_{k_n}(\tilde\theta,C\sqrt{k_n(\log n)/n},\|.\|_2),
\end{align*}
for some constants $c,C>0$, where the last line follows from condition (20) in \cite{rousseau:szabo:2015:main}, i.e. 
for all $k \leq K_n$ and $\theta,\bar\theta\in\Theta(k)$,
\begin{align}
d_n^2(\theta, \bar\theta) = n^{-1}( \theta- \bar\theta)^T \Phi_k^T \Phi_k ( \theta - \bar\theta) \asymp \|\theta - \bar\theta\|_2^2. \label{eq: help1_classification}
\end{align}
The proof of condition \textbf{A1} concludes by applying Lemma \ref{lem: prior small}.

Next let us consider the sieves 
$$\Theta_n(k) = \{ \theta \in \RR^k ; \|\theta\|_2 \leq R_n\}, \quad R_n = C_1\sqrt{k}(n\epsilon_n^2)^{1/q},$$
for some large enough constant $C_1$ and $q$ given in \eqref{eq: sieve_prior}. Condition \textbf{A2} (i) follows from Lemma \ref{lem: prior small}. Since the function $x\mapsto\mu'(x)^2/[\mu(x)(1-\mu(x)) ]$ is uniformly bounded from above, in view of \eqref{eq: hellinger_L2} and \eqref{eq: help1_classification} the Hellinger metric is bounded by (a multiple of) the $\ell_2$ distance. Let $u >J_1\sqrt{k (\log n)/n} $, the covering number of $\Theta_n(k)$ by $\ell_2$-balls of radius $c_6 u $ is bounded from above by a term of order $\exp \left( C k (\log n -  \log u)\right)$ so the local entropy is bounded by $\tilde{C} k \log n$ and since $nu^2 >  J_1^2 k \log n$, choosing $J_1$ large enough ($J_1^2\geq 2\tilde{C}/c_5$ is large enough), for all $k$, the entropy condition \textbf{A2} (iii) also holds. The testing condition \textbf{A2} (ii) follows for instance from Corollary 4 on page 149 in \cite{birge:1983} or Lemma 2 in \cite{lecam:1986}.

Next we deal with assumption \textbf{A3}. In view of \eqref{eq: equiv:metric} and \eqref{eq: help1_classification}
\begin{align*}
\pi_{|k}\big( B_k(\theta_0,J_1\sqrt{k(\log n)/n},h_n) \big)&\leq \pi_{|k}\big( B_k(\theta_0,C\sqrt{k(\log n)/n},\|.\|_2) \big)\\
&\leq c_{\max}^k \mbox{Vol}\big( B_k(\theta_0,C\sqrt{k(\log n)/n},\|.\|_2) \big)\\
&\leq e^{-c'k\log n}\leq e^{-(c'/2)M_0n\eps_n^2}
\end{align*}
and the condition follows by large enough choice of $M_0$.

Condition \textbf{A4} (ii) is verified in Lemma \ref{Lem: class:likeratio}.  
To prove condition \textbf{A4} (iii), we use Lemma \ref{lem: class_Linf} and Lemma \ref{lem:help_class}, so that $\sup_{h_n(\theta,\theta_0)\leq (M_\eps+1) \eps_n}\|f_{\theta}\|_{\infty}\leq C$ and $\|f_{\theta_{[k]}^o}\|_{\infty}\leq C$, respectively. Therefore by combining \eqref{eq: hellinger_L2} and \eqref{eq: help1_classification}, there exist constants $c,C>0$ such that for all $\theta\in\Theta(k)$ satisfying $h_n(\theta,\theta_0)\leq (M_\eps+1) \eps_n$
\begin{align*}
\frac{\pi_{|k}\big( B_k(\theta,\delta_{n,k} \sqrt{k/n},h_n) \big)}{ \pi_{|k}\big( B_k(\theta_{[k]}^o,\sqrt{k/n},h_n)\big)}\leq \frac{\pi_{|k}\big( B_k(\theta, C\delta_{n,k}\sqrt{k/n},\|.\|_2) \big)}{ \pi_{|k}\big( B_k(\theta_{[k]}^o,c \sqrt{k/n},\|.\|_2)\big)}.
\end{align*}
We conclude that condition \textbf{A4} (iii) holds, following the same lines of reasoning as in the proof of Proposition \ref{prop: reg} (below assertion \eqref{eq: regr_A5}).

Finally we deal with condition \textbf{A4} (i). Note that for all $k\in\mathcal{K}_n(M)$
\begin{equation}\label{eq: KL_bound_projection}
\begin{split}
p_{\theta_0}^{(n)} \log\frac{ p_{\theta_{[k]}^o}^{(n)}}{  p_{\theta}^{(n)}} &=\sum_{i=1}^n  q_{0,i}\Big[ (\theta_{[k]}^o- \theta)^T \Phi_k(x_i) + \log ( 1 + e^{\theta^T\Phi_k(x_i)} ) - \log ( 1 + e^{(\theta_{[k]}^o)^T\Phi_k(x_i)} )\Big]  \\
&\qquad + \sum_{i=1}^n(1-q_{0,i})\Big[\log ( 1 + e^{\theta^T\Phi_k(x_i)} ) - \log ( 1 + e^{(\theta_{[k]}^o)^T\Phi_k(x_i)} )\Big]\\
&= \sum_{i=1}^n  \big(q_{0,i} - q^o_i\big)(\theta_{[k]}^o- \theta)^T \Phi_k(x_i) + O\left(\sum_i \left( (\theta_{[k]}^o - \theta)^T \Phi_k(x_i) \right)^2 \right)\\
&= O\left(\sum_{i=1}^n \left( ( \theta_{[k]}^o - \theta)^T \Phi_k(x_i) \right)^2 \right) = O(n\| \theta_{[k]}^o - \theta\|_2^2),
\end{split}
\end{equation}
where in the second line we used the Taylor expansions of $f(\theta)=\log (1+e^{\theta^T \Phi_k(x_i)})$ around $\theta_{[k]}^o$, while the third line follows from Lemma \ref{lem:help_class} and assertion \eqref{eq: help1_classification}.  Also note that for $\theta\in\Theta(k)$, $k\in\mathcal{K}_n(M)$ satisfying $h_n(\theta_{[k]}^o,\theta)\lesssim \eps_n$ we get by triangle inequality and assertion \eqref{eq: UB:hellinger:class} that 
\begin{align*}
h_n(\theta,\theta_0)\leq h_n(\theta_{[k]}^o,\theta)+h_n(\theta_{[k]}^o,\theta_0) \lesssim\eps_n+ n^{-\beta_0/(1+2\beta_0)}
\end{align*}
 and therefore in view of Lemma \ref{lem: class_Linf},  $\|f_{\theta}\|_{\infty}=O(1)$ so the right hand side of display \eqref{eq: KL_bound_projection}  is of order $O(nh_n^2(\theta_{[k]}^o,\theta))$. Similarly we obtain
\begin{equation*}
\begin{split}
p_{\theta_0}^{(n)} \log^2\left(\frac{ p_{\theta_{[k]}^o}^{(n)}}{  p_{\theta}^{(n)}} \right) &\lesssim n \| \theta_{[k]}^o - \theta\|_2^2 = O(nh_n^2(\theta_{[k]}^o,\theta))
\end{split}
\end{equation*}
providing us \textbf{A4} (i).

Next we show that the posterior mean $\hat{q}_n=E_{\pi(.|\textbf{Y})}(\mu(f_{\theta}))$ satisfies condition \textbf{A0}. By convexity and boundedness of  $h_n^2$ 
 $$h_n^2(\hat{q}_n, q_0) \leq E_{\pi(.|\textbf{Y})}\left( h_n^2(\theta, \theta_0) \right) \leq C\epsilon_n^2 + 2\pi\left( h_n^2(\theta,\theta_0)>C\epsilon_n^2 | \mathbf Y\right)  \lesssim \epsilon_n^2,$$
 where $E_{\pi(.|\textbf{Y})}$ denotes the expectation with respect to the posterior,  as soon as $\epsilon_n^2 \gtrsim (n\epsilon_n^2)^{-1}$, see \eqref{eq: post:contraction:asymptotic}.

It remains to show the second statement of the proposition. Again as a consequence of Corollary \ref{cor:radius} it is sufficient to verify that $\eps_n\lesssim (n/\log n)^{-\beta/(1+2\beta)}$, which follows automatically from the computations in Remark \ref{rem: UBforEps_n}, where the bound $k_n\lesssim (n/\log n)^{1/(1+2\beta)}$ was derived.

\begin{lemma}\label{lem: class_Linf}
 Let $\theta_0\in S^{\beta_0}(L_0)$ for some $\beta_0>1/2$, $L_0>0$,  $\theta\in\Theta(k)$ for $k=o\big((n/\log n)^{\frac{2\beta_0}{1+2\beta_0}}\big)$ satisfying $h_n(\theta_0, \theta) \leq C (n/\log n)^{- \frac{\beta_0}{1+2\beta_0}}$, then $\|f_{\theta } \|_{\infty} =O(1) $. 
\end{lemma}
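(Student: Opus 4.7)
The plan is to decompose $f_\theta = f_{\theta_{0,[k]}} + \tilde g$, where $\theta_{0,[k]} = (\theta_{0,1}, \ldots, \theta_{0,k})$ is the truncation of the coefficient vector of $\theta_0$ and $\tilde g = f_\theta - f_{\theta_{0,[k]}}$ lies in $\mathrm{span}(\phi_1,\ldots,\phi_k)$, and to bound each piece by an absolute constant.

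First I would invoke the Sobolev embedding $S^{\beta_0}\hookrightarrow L^\infty$: since $\theta_0\in S^{\beta_0}(L_0)$ with $\beta_0>1/2$, Cauchy--Schwarz applied by splitting $|\theta_{0,j}|=(|\theta_{0,j}|j^{\beta_0})\cdot j^{-\beta_0}$ gives $\sum_j|\theta_{0,j}|\le L_0^{1/2}(\sum_{j\ge 1}j^{-2\beta_0})^{1/2}<\infty$, hence $\|f_{\theta_0}\|_\infty\le B$ for some $B=B(L_0,\beta_0)$; the same argument applied to the tail yields the bias bound $\|f_{\theta_0}-f_{\theta_{0,[k]}}\|_\infty\lesssim k^{-(\beta_0-1/2)}$ and, in particular, $\|f_{\theta_{0,[k]}}\|_\infty\le B'$ uniformly in $k$.

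Next I would convert the small-Hellinger hypothesis into an empirical $L^2$ bound on $\tilde g$. The key pointwise input is the elementary estimate $h_b^2(\mu(a),\mu(b))\ge c_B\min((a-b)^2,1)$ for $|b|\le B$, obtained by linearization when $|a-b|\le 1$ (so $|a|\le B+1$ and $\mu'$ is bounded below) and by noting a positive floor when $|a-b|>1$, since then $\mu(a)$ is separated from $\mu(b)\in[\delta,1-\delta]$. Combined with $h_n(\theta_0,\theta_{0,[k]})\lesssim \|f_{\theta_0}-f_{\theta_{0,[k]}}\|_\infty\lesssim k^{-(\beta_0-1/2)}$ (both functions being bounded, so Hellinger is locally $L^2$) and with the hypothesis $h_n(\theta_0,\theta)\le C\eps_n$ where $\eps_n=(n/\log n)^{-\beta_0/(1+2\beta_0)}$, the triangle inequality yields
\[
\sum_{i=1}^n \min\bigl(\tilde g(x_i)^2,1\bigr)\lesssim n\bigl(\eps_n^2+k^{-(2\beta_0-1)}\bigr).
\]

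Finally, I would combine the Gram matrix condition \eqref{assump: reg_design_mtx} (which gives $\|\theta-\theta_{0,[k]}\|_2^2\asymp n^{-1}\sum_i\tilde g(x_i)^2$) with the Nikolskii-type inequality $\|\tilde g\|_\infty\le C_\phi\sqrt{k}\,\|\theta-\theta_{0,[k]}\|_2$ (from uniform boundedness of the basis) in a self-bounding argument: splitting $\sum_i\tilde g(x_i)^2$ over $\{|\tilde g|\le 1\}$ (bounded by the estimate just derived) and $V=\{|\tilde g|>1\}$ (with $|V|\lesssim n(\eps_n^2+k^{-(2\beta_0-1)})$ and $\sum_V \tilde g(x_i)^2\le |V|\cdot\|\tilde g\|_\infty^2$), I obtain the quadratic inequality $u\lesssim \alpha_n(1+ku)$, with $u=\|\theta-\theta_{0,[k]}\|_2^2$ and $\alpha_n=\eps_n^2+k^{-(2\beta_0-1)}$. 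The hypothesis $k=o((n/\log n)^{2\beta_0/(1+2\beta_0)})$ forces $k\alpha_n=o(1)$, so the inequality closes to $u\lesssim \alpha_n$, giving $\|\tilde g\|_\infty\lesssim \sqrt{k}\,\eps_n+k^{1-\beta_0}=O(1)$, which combined with the first step proves the lemma. The main obstacle is this last step: without the self-bounding device (feeding Nikolskii and Gram back into the Hellinger estimate), the contribution of the ``bad'' set $V$ produces an uncontrolled $\sqrt{k}$-loss; the rate restriction on $k$ is calibrated precisely to make the feedback coefficient $k\alpha_n$ negligible.
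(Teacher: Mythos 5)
Your overall strategy -- convert the Hellinger hypothesis into an empirical $L^2$ bound via a pointwise estimate $h_b^2(\mu(a),\mu(b))\gtrsim\min((a-b)^2,1)$, split the design points into a good set and a small bad set, and control the bad set via $\|\tilde g\|_\infty\lesssim\sqrt{k}\|\theta-\theta_{0,[k]}\|_2$ -- is sound and is essentially the same mechanism as the paper's proof (the paper handles the bad set by showing the restricted Gram matrix $\Phi_{I_1}^T\Phi_{I_1}\asymp n I$ rather than by your self-bounding trick; both devices are fine). The genuine gap is in the bias bookkeeping. You compare $\theta$ to the truncation $\theta_{0,[k]}$ at the \emph{model} dimension and bound the bias by the sup-norm tail estimate $\|f_{\theta_0}-f_{\theta_{0,[k]}}\|_\infty\lesssim k^{-(\beta_0-1/2)}$, so that $\alpha_n=\eps_n^2+k^{-(2\beta_0-1)}$. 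Then $k\alpha_n\gtrsim k^{2-2\beta_0}$ and $\sqrt{k\alpha_n}\gtrsim k^{1-\beta_0}$, which tend to infinity whenever $\beta_0\in(1/2,1)$ and $k\to\infty$ (in particular for $k\asymp k_n\asymp(n/\log n)^{1/(1+2\beta_0)}$, exactly where the lemma is used). So your claims ``$k=o((n/\log n)^{2\beta_0/(1+2\beta_0)})$ forces $k\alpha_n=o(1)$'' and ``$\sqrt{k}\,\eps_n+k^{1-\beta_0}=O(1)$'' are false on the stated range $\beta_0>1/2$: the self-bounding inequality does not close and the final bound does not give $O(1)$. Your argument as written only works for $\beta_0>1$.

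The paper avoids this by not comparing to $\theta_{0,[k]}$ at all: it pads $\theta$ with zeros and compares to $\theta_{0,[k_n^*]}$ with $k_n^*=m_n(n/\log n)^{2\beta_0/(1+2\beta_0)}$, $m_n\to0$ slowly, i.e.\ a nearly maximal truncation level independent of $k$. There the bias is measured in the empirical $L^2$ metric, $h_n(\theta_0,\theta_{0,[k_n^*]})\lesssim d_n(\theta_0,\theta_{0,[k_n^*]})\lesssim (k_n^*)^{-\beta_0}+K_n^{-(\beta_0-1/2)}$, using the Gram condition \eqref{assump: reg_design_mtx} up to dimension $K_n$ together with the standing assumption $K_n\gg n^{1/(2(\beta_0-1/2))}$ to make the ultra-high-frequency tail negligible; then the $\ell_1$ passage costs only $\sqrt{k_n^*}\,\eps_n=\sqrt{m_n}=o(1)$ and $(k_n^*)^{1/2-\beta_0}=o(1)$. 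You could repair your route either by adopting this comparison point, or by replacing your sup-norm bias bound with the sharper $d_n$-bias bound $k^{-\beta_0}+K_n^{-(\beta_0-1/2)}$ -- but the latter also requires invoking the design condition at level $K_n$ and the hypothesis on $K_n$, neither of which appears in your proposal, and some separate care is still needed when $k$ stays bounded. As it stands, the proof does not establish the lemma for $1/2<\beta_0<1$.
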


\begin{proof}
Assume that $\max_i |f_{\theta } (x_i ) |  > L $ and split $ \{ 1, ..., n\}$ into $I_1 = \{i: \, |f_{\theta } (x_i ) |  \leq L\}$, $I_2 = \{i:\, f_{\theta } (x_i ) > L\}$ and $I_3 =  \{ i:\, f_{\theta } (x_i ) < -L\}$. Then we have for all $i \in I_2$ and $l\in\mathbb{N}$ that $\mu(f_{\theta}(x_i)) \geq (1  +\delta) \mu(f_{\theta_{0,[l]}}(x_i))$ for some $\delta >0$ fixed, by choosing $L$ large enough. Similarly for all $i \in  I_3$, $1 - \mu(f_{\theta}(x_i)) \geq (1  +\delta) (1 - \mu(f_{\theta_{0,[l]}}(x_i)))$.  Therefore we can conclude that for all $i \in I_2 \cup I_3$, 
\begin{align*}
h_b^2\big(\mu(f_\theta(x_i)),\mu(f_{\theta_{0,[l]}}(x_i))\big)\geq \delta^2 \big[\big( 1-\mu(f_{\theta_{0,[l]}}(x_i)) \big)\vee\mu(f_{\theta_{0,[l]}}(x_i))\big],
\end{align*}
following from the inequality $(\sqrt{1+\delta}-1)^2\geq\delta^2/4$, for all sufficiently small $\delta>0$.

Furthermore, note that for every $\theta\in\Theta(k)$
\begin{align}
\| f_{\theta}\|_\infty \leq \max_{j=1,...,k}\|\phi_j\|_\infty \|\theta\|_1.\label{eq: UB_Linfty}
\end{align}
Then \eqref{eq: hellinger_L2} combined with the preceding display implies in particular that for all $L>0$ there exist $c,C>0$ such that if $\|\theta\|_1\leq L$, $\|\theta'\|_1\leq L$, then 
\begin{equation}\label{eq:distances}
c d_n(\theta, \theta') \leq h_n(\theta , \theta')\leq C d_n(\theta, \theta').
\end{equation}
Hence, in view of assumption (20) of \cite{rousseau:szabo:2015:main} if $\theta, \theta' \in \Theta(k)$ with $k\leq K_n$, then \eqref{eq:distances} remains valid with $\|\theta - \theta'\|_2$ replacing $d_n(\theta, \theta').$

Let $k_n^* = m_n(n/\log n)^{\frac{2\beta_0}{1+2\beta_0}}$, for some $m_n=o(1)$, and note that  \eqref{eq: UB_Linfty} with $\theta=\theta_{0,[k_n^*]}$ implies that $\|f_{\theta_{0,[k_n^*]}}\|_{\infty}\leq C$, which in case of $|f_{\theta}(x_i )| \leq L$ results in
$h_b^2(q_\theta(x_i), q_{\theta_{0,[k_n^*]}}(x_i)) \geq c (f_{\theta}(x_i)- f_{\theta_{0,[k_n^*]}}(x_i))^2$, for some sufficiently small $c>0$. By slightly abusing our notation we write $\theta\in\Theta(k)$ in the form $\theta=(\theta_1,...,\theta_k,0,...,0)\in\mathbb{R}^{k_n^* }$. Then the preceding argument implies in particular  that 
\begin{equation}\label{eq: help_hellingerLB}
\begin{split}
n h_n^2 (\theta,\theta_{0,[k_n^*]} ) &\geq c\sum_{i \in I_1} (  f_{\theta}(x_i)-f_{\theta_{0,[k_n^*]}} (x_i))^2 + c\delta^2 \sum_{i\in I_2\cup I_3} q_{0[k_n^*],i}\vee (1 - q_{0[k_n^*],i}) \\
&\gtrsim ( \theta-  \theta_{0,[k_n^*]})^T \Phi_{I_1}^T \Phi_{I_1} ( \theta  - \theta_{0,[k_n^*]}) +  | I_2 \cup I_3|  
\end{split}
\end{equation}
where in the second inequality the matrix $\Phi_{I_1} \in \mathbb{R}^{|I_1|\times k_n^*}$ is defined by its rows $(\phi_1(x_i), ..., \phi_{k_n^*}(x_i))$, $i \in I_1$.

We show below that 
\begin{align}
\frac{\Phi_{I_1}^T \Phi_{I_1}}{ n }  \asymp I_d. \label{eq: small index_mtx}
\end{align}
Then in view of \eqref{eq: UBforBias_reg}
\begin{align*}
h_n(\theta_0,\theta_{0,[k_n^*]})\lesssim d_n(\theta_0,\theta_{0,[k_n^*]}) \lesssim (k_n^*)^{-\beta_0 } \label{eq: help_UB_hellinger_dist}
\end{align*}
which together with \eqref{eq: small index_mtx} and \eqref{eq: help_hellingerLB} results in
\begin{align*}
\|\theta  -\theta_{0,[k_n^*]}\|_2^2 &\lesssim  \frac{1}{n}( \theta-  \theta_{0,[k_n^*]})^T \Phi_{I_1}^T \Phi_{I_1} ( \theta  - \theta_{0,[k_n^*]})\\
&\lesssim h_n^2 (\theta,\theta_{0,[k_n^*]} ) \lesssim h_n^2(\theta,\theta_0)+h_n^2(\theta_0,\theta_{0,[k_n^*]})\\
&\lesssim  (n/\log n)^{-\frac{2\beta_0}{1+2\beta_0}}+ (k_n^*)^{-2\beta_0}.
\end{align*}
This implies in particular that $\|\theta - \theta_{0,[k_n^*]}\|_1 \lesssim \sqrt{k_n^*} (n/\log n)^{-\beta_0/(2\beta_0+1)}+(k_n^*)^{-\beta_0+1/2} = o(1)$. Hence in view of \eqref{eq: UB_Linfty}, $\|f_{\theta }\|_\infty\leq  \|f_{\theta_0}\|_\infty+o(1)$.

 It remains to prove \eqref{eq: small index_mtx}.
 First note that
\begin{equation*}
\Phi_{I_1}^T \Phi_{I_1} = \Phi_{k_n^*}^T \Phi_{k_n^*} - \Phi_{I_1^c}^T \Phi_{I_1^c}.
\end{equation*}
For all $j_1, j_2 \in \{1, ..., k_n^*\}$, take $\tilde\theta\in\Theta(k)$, $k\leq k_n^*$, such that $h^{2}_n(\tilde\theta,\theta_0)\leq \inf_{\theta\in\Theta(k)} h^{2}_n(\theta,\theta_0)+(n/\log n)^{-2\beta_0/(2\beta_0+1)}$, then in view of \eqref{eq: help_hellingerLB}
\begin{align*}
|(\Phi_{I_1^c}^T \Phi_{I_1^c} )(j_1, j_2) | &\leq \max_j \|\phi_j\|_\infty^2 |I_2\cup I_3|\lesssim  n h_n^2 (\tilde{\theta},\theta_{0,[k_n^*]} ) \\
& \lesssim nb(k) +(n/\log n)^{-2\beta_0/(2\beta_0+1)}n+ nh_n^2(\theta_0, \theta_{0,[k_n^*]})\\
& \lesssim nk^{-2\beta_0} + (n/\log n)^{-2\beta_0/(2\beta_0+1)}n=o(n/k),
\end{align*}
where in the last inequality we used that \eqref{eq: UBforBias_reg} implies $b(k)\lesssim k^{-2\beta}$. Dividing both sides with $n$ concludes the proof of the lemma.
\end{proof}

 \begin{lemma}\label{Lem: class:likeratio}
In the classification model \eqref{def: class} for all $M>0$ there exists a $B>0$ such that
\begin{align*}
P_{\theta_0}^{(n)}\big(\sup_{k\in\mathcal{K}_n(M)}\sup_{\theta\in \Theta_n(k)\cap B_k(\theta_0, M \eps_n,h_n)} \ell_n(\theta)-\ell_n(\theta_{[k]}^o)-Bk >0\big)=o(1).
\end{align*}
 \end{lemma}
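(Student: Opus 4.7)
The plan is to exploit a second-order Taylor expansion of $\ell_n(\theta)-\ell_n(\theta_{[k]}^o)$ along the line segment joining $\theta_{[k]}^o$ and $\theta$, and then complete the square, using the key fact that the KL-projection $\theta_{[k]}^o$ kills the deterministic part of the linear term. Writing $\theta_t=\theta_{[k]}^o+t(\theta-\theta_{[k]}^o)$ and differentiating $L_i(\theta_t)=Y_if_{\theta_t}(x_i)-\log(1+e^{f_{\theta_t}(x_i)})$ twice in $t$, I get
\begin{equation*}
\ell_n(\theta)-\ell_n(\theta_{[k]}^o)=(\theta-\theta_{[k]}^o)^T\sum_{i=1}^n(Y_i-q_i^o)\Phi_k(x_i)-\int_0^1(1-t)\sum_{i=1}^nq_{\theta_t,i}(1-q_{\theta_t,i})\big((\theta-\theta_{[k]}^o)^T\Phi_k(x_i)\big)^2dt.
\end{equation*}

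The crucial observation is that $\theta_{[k]}^o$ is the KL-minimizer on $\Theta(k)$, so writing out $\partial KL(\theta_0,\theta)/\partial\theta_j=n^{-1}\sum_i\phi_j(x_i)(q_{\theta,i}-q_{0,i})$ and setting the gradient to zero at $\theta_{[k]}^o$ gives the orthogonality $\Phi_k^T(\mathbf q^o-\mathbf q_0)=0$. Consequently the linear term equals $(\theta-\theta_{[k]}^o)^TW_k$ where $W_k=\Phi_k^T(\mathbf Y-\mathbf q_0)$ is a centered stochastic vector that does \emph{not} depend on $\theta$ (and whose $k$-truncations are nested, so $\|W_k\|_2$ is nondecreasing in $k$). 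By Cauchy--Schwarz it is bounded by $\|\theta-\theta_{[k]}^o\|_2\|W_k\|_2$.

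For the quadratic remainder I use Lemma \ref{lem: class_Linf} applied to both $\theta$ and (via triangle inequality, since $h_n(\theta_{[k]}^o,\theta_0)^2\le KL(\theta_0,\theta_{[k]}^o)\le KL(\theta_0,\theta_{0,[k]})\lesssim\eps_n^2(k)\le M^2\eps_n^2$) to $\theta_{[k]}^o$, hence to any convex combination $\theta_t$; this yields $\|f_{\theta_t}\|_\infty=O(1)$ and therefore $q_{\theta_t,i}(1-q_{\theta_t,i})\ge c_1>0$ uniformly. Combined with \eqref{eq: help1_classification}, the quadratic term is bounded above by $-c n\|\theta-\theta_{[k]}^o\|_2^2$ for some $c>0$. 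Setting $u=\|\theta-\theta_{[k]}^o\|_2$ and maximizing $u\|W_k\|_2-cnu^2$ over $u\ge 0$ gives the $\theta$-free upper bound
\begin{equation*}
\sup_{\theta\in\Theta_n(k)\cap B_k(\theta_0,M\eps_n,h_n)}\bigl(\ell_n(\theta)-\ell_n(\theta_{[k]}^o)\bigr)\le\frac{\|W_k\|_2^2}{4cn}.
\end{equation*}

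It remains to control $\max_{k\in\mathcal K_n(M)}\|W_k\|_2^2$. By Lemma \ref{rem:Kn} every such $k$ satisfies $k\le 2M^2 k_n$, and by monotonicity of $\|W_k\|_2^2$ in $k$ the maximum is dominated by $\|W_{2M^2k_n}\|_2^2$. Using $E(Y_i-q_{0,i})^2\le 1/4$, the independence of the $Y_i$, and assumption \eqref{assump: reg_design_mtx} to bound $(\Phi_k^T\Phi_k)_{jj}\le C_0n$, I get $E_{\theta_0}^{(n)}\|W_{2M^2k_n}\|_2^2\lesssim nk_n$, so Markov's inequality yields $\|W_{2M^2k_n}\|_2^2\le C_\eps nk_n$ with probability $1-\eps$. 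Since $k\gtrsim k_n$ for $k\in\mathcal K_n(M)$ (again Lemma \ref{rem:Kn}), this translates into $\ell_n(\theta)-\ell_n(\theta_{[k]}^o)\le Bk$ uniformly on the stated event with $B=B(M,\eps)$, as required. The main obstacle is verifying that the uniform $\|f_\theta\|_\infty$ bound is available on the \emph{whole} segment between $\theta_{[k]}^o$ and $\theta$, not just at the endpoints -- but this is immediate from convexity of $f\mapsto\|f\|_\infty$ and the linear structure $f_{\theta_t}=(1-t)f_{\theta_{[k]}^o}+tf_\theta$.
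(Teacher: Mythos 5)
Your proposal is correct and follows essentially the same route as the paper's proof: the KL-projection orthogonality $\Phi_k^T(\mathbf q^o-\mathbf q_0)=0$ recenters the linear term at $\mathbf q_0$ (the paper gets this as the cancellation $\mathcal B+\mathcal D=0$), the sup-norm control from Lemmas \ref{lem: class_Linf} and \ref{lem:help_class} yields the negative quadratic $\lesssim -n\|\theta-\theta_{[k]}^o\|_2^2$, and the stochastic term is handled exactly as in the paper by Cauchy--Schwarz, monotonicity in $k$ with $k\leq 2M^2k_n$, a second-moment bound of order $nk_n$, Markov's inequality, completion of the square, and $k\gtrsim k_n$ from Lemma \ref{rem:Kn}. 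The only difference is cosmetic: you use one exact Taylor expansion with integral remainder in the natural parameter, whereas the paper expands $\log q_i$ and $\log(1-q_i)$ separately into six terms.
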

 
 \begin{proof}
First of all note that in view of Lemma  \ref{lem: class_Linf}, for $\theta\in B_k(\theta_{0}, M \eps_n,h_n)$, $k\in\mathcal{K}_n(M)$ we have $\sup_i|f_{\theta}(x_i)|=O(1)$. Note that as a direct consequence there exists $0<c<C<1$ such that $c<\inf_{i}q_i\leq \sup_{i}q_i<C$. Furthermore following from the mean value theorem there exists $\bar{f}(x_i)\in [f_{\theta}(x_i),f_{\theta_{[k]}^o}(x_i)],\, i=1,2,...,n$ such that $|\bar{f}(x_i)|\leq C$ and
\begin{align}
\Big(\log q_i-\log q_i^o\Big)=\frac{\mu'(\bar{f}(x_i))}{\mu(\bar{f}(x_i))}  \big( f_\theta(x_i)-f_{\theta_{[k]}^o}(x_i)\big).\label{eq: help11}
\end{align}
The difference of the log-likelihood functions $\ell_n(\theta)-\ell_n(\theta_{[k]}^o)$ can be written as
\begin{align}
\sum_{i=1}^n& \big(\log q_i-\log q_i^o\big)y_i+\sum_{i=1}^n \big(\log (1-q_i)-\log (1-q_i^o)\big)(1-  y_i)\nonumber\\
&=\sum_{i=1}^n \big(\log q_i-\log q_i^o\big)(y_i- q_{0,i})+\sum_{i=1}^n \big(\log q_i-\log q_i^o\big)(q_{0,i}-q_i^o)\nonumber\\
&\quad+\sum_{i=1}^n q_i^o\big(\log q_i-\log q_i^o\big)+\sum_{i=1}^n \big(\log (1-q_i)-\log (1-q_i^o)\big)( q_i^o-q_{0,i})\nonumber\\
&\quad+\sum_{i=1}^n \big(\log (1-q_i)-\log (1-q_i^o)\big)(q_{0,i}-  y_i)\nonumber\\
 &\quad+\sum_{i=1}^n \big(\log (1-q_i)-\log (1-q_i^o)\big) (1-q_i^o)\nonumber\\
&=\mathcal{A}+\mathcal{B}+\mathcal{C}+\mathcal{D}+\mathcal{E}+\mathcal{F}. \label{eq: likeratio_help}
\end{align}
We deal with the six terms on the right hand side separately. 

First of all note that in view of Lemma \ref{lem:help_class}
\begin{align*}
\mathcal{B}+\mathcal{D}&=\sum_{i=1}^n (q_i^o-q_{0,i})\big(  \log\frac{q_i^o}{1-q_i^o}- \log\frac{q_i}{1-q_i}\big)\\
&= \sum_{i=1}^n (q_i^o-q_{0,i})(\theta_{[k]}^o-\theta)^T\Phi_k(x_i) =0.
\end{align*}
Next note that by Taylor expansion 
$$\log q_i=\log q_i^o+\frac{q_i-q_{i}^o}{q_{i}^o}-\frac{(q_i-q_{i}^o)^2}{2\bar{q}_{i}},\quad \text{for some $\bar{q}_{i}\in[q_i,q_i^o]\cup [q_i^o,q_i]$},$$
and as a consequence 
$$\mathcal{C}=\sum_{i=1}^n(q_i-q_i^o) -\sum_{i=1}^n (q_i-q_i^o)^2 \frac{q_i^o}{2\bar{q}_i}\leq \sum_{i=1}^n(q_i-q_i^o)-\frac{q^o_i}{2}\sum_{i=1}^n (q_i-q_i^o)^2.$$
By similar arguments one can also derive that $\mathcal{F}\leq - \sum_{i=1}^n(q_i-q_i^o)  -\frac{1-q^o_i}{2}\sum_{i=1}^n (q_i-q_i^o)^2$ and therefore $\mathcal{C}+\mathcal{F}\leq- \sum_{i=1}^n (q_i-q_i^o)^2/2$. Since both $\|f_{\theta}\|_{\infty}=O(1)$ and $\|f_{\theta_{[k]}^o}\|_{\infty}=O(1)$ by the mean value theorem there exist a constant $c>0$ such that $\sum_{i=1}^n (q_i-q_i^o)^2\geq cnd_n^2(\theta,\theta_{[k]}^o)$ which is further bounded from below by a multiple of $n\|\theta-\theta_{[k]}^o \|_2^2$ in view of \eqref{eq: norms}. Therefore we can conclude that $\mathcal{C}+\mathcal{F}\lesssim -n\|\theta-\theta_{[k]}^o\|_2^2$.

Following from \eqref{eq: help11} and Cauchy-Schwarz the term $\mathcal{A}$ is bounded by
\begin{align}
&\sup_{k\in\mathcal{K}_n(M)}
\Big|\sum_{i=1}^{n}\frac{\mu'(\bar{f}(x_i))}{\mu(\bar{f}(x_i))}  \big( f_\theta(x_i)-f_{\theta_{[k]}^o(x_i)}\big) \Big(y_i- q_{0,i}\Big)\Big|\nonumber\\
&\quad =\sup_{k\in\mathcal{K}_n(M)} \Big| \sum_{j=1}^k \sum_{i=1}^{n}\frac{\mu'(\bar{f}(x_i))}{\mu(\bar{f}(x_i))}  \big(\theta_j-\theta_{[k],j}^o\big)\phi_j(x_i) \Big(y_i- q_{0,i} \Big) \Big|\nonumber\\
&\quad \leq\sup_{k\in\mathcal{K}_n(M)} \Big(\sum_{j=1}^k(\theta_j-\theta_{[k],j}^o)^2 \Big)^{1/2}\nonumber\\
&\qquad \quad\times \sup_{k\in\mathcal{K}_n(M)} \Big(\sum_{j=1}^k \big[\sum_{i=1}^{n}\frac{\mu'(\bar{f}(x_i))}{\mu(\bar{f}(x_i))}\phi_j(x_i) \big(y_i- q_{0,i} \big)\big]^2\Big)^{1/2}.\label{eq: help13}
\end{align}
Note that the second term on the right hand side of \eqref{eq: help13} is increasing in $k$, hence over $\mathcal{K}_n(M)$ it takes its maximum at $2M^2k_n$, see Lemma \ref{rem:Kn} in \cite{rousseau:szabo:16:main}. Also note that the centered independent random variables $\big(y_i- q_{0,i} \big)$ have bounded second moments, so
\begin{align*}
E_{\theta_0}\big[\sum_{i=1}^{n}\frac{\mu'(\bar{f}(x_i))}{\mu(\bar{f}(x_i))}\phi_j(x_i) \big(y_i- q_{0,i} \big)\big]^2\leq\sum_{i=1}^{n} \frac{\mu'(\bar{f}(x_i))^2}{\mu(\bar{f}(x_i))^2}\phi_j^2(x_i) E_{\theta_0}\big(y_i- q_{0,i} \big)^2\lesssim n.
\end{align*}
Then by Markov's inequality we get that for every $\eps>0$ there exists a large enough constant $C_{\eps}$ such that the second term on the right hand side of $\eqref{eq: help13}$  is with $P_{\theta_0}$-probability larger than $1-\eps$ bounded from above by $C_\eps \sqrt{n k_n}$. We conclude that the term $\mathcal{A}$ is bounded from above by a large enough constant times $\sqrt{n k_n} \| \theta-\theta^o_{[k]}\|_2$ with $P_{\theta_0}$-probability larger than $1-\eps$. Similar arguments lead to the same upper bound for the term $\mathcal{E}$. 

Therefore by putting together all the preceding upper bounds the right hand side of \eqref{eq: likeratio_help} is bounded from above with $P_{\theta_0}$-probability at least $1-2\eps$ by a multiple of
\begin{align*}
\sqrt{nk_n}\|\theta-\theta_{[k]}^o\|_2-cn\|\theta-\theta_{[k]}^o\|_2^2\lesssim \sqrt{n}\|\theta-\theta_{[k]}^o\|_2( \sqrt{k_n}-c\sqrt{n}\|\theta-\theta_{[k]}^o\|_2 )\lesssim k_n,
\end{align*}
finishing the proof of the lemma.
 \end{proof}
 
 \begin{lemma}\label{lem:help_class}
   If $\theta_0 \in \mathcal S^{\beta_0}(M_0)$ for some $M_0>0$ and $\beta_0>1/2$,  then $\theta_{[k]}^o$ (and the corresponding $q^o_i=q_{\theta_{[k]}^o}(x_i)$), $k\in\mathcal{K}_n(M)$ satisfies
\begin{equation}\label{KLproj:classif}
 \quad \sum_{i=1}^n (q_{0,i} - q^o_i)\phi_j(x_i) = 0, \quad \text{for all $j=1,...,k$}.
\end{equation}
Furthermore, if $K_n\gg n^{\frac{1}{2(\beta_0-1/2)}}$ then for all $k\leq K_n$ we have $\|f_{\theta_{[k]}^o}\|_{\infty}=O(1)$.

\end{lemma}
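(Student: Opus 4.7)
The first assertion is the first-order necessary condition for the strictly convex optimisation $\min_{\theta\in\Theta(k)} KL(\theta_0,\theta)$ that defines $\theta_{[k]}^o$. Writing $q_i(\theta)=\mu(f_\theta(x_i))$, I would use the elementary identity $\partial_{\theta_j} q_i(\theta)=q_i(\theta)(1-q_i(\theta))\phi_j(x_i)$ to compute
$$\partial_{\theta_j}KL(\theta_0,\theta)=-\frac{1}{n}\sum_{i=1}^n (q_{0,i}-q_i(\theta))\phi_j(x_i).$$
Because $\theta\mapsto\log(1+e^{f_\theta(x_i)})$ is strictly convex and, under \eqref{assump: reg_design_mtx}, $\theta\mapsto KL(\theta_0,\theta)$ is coercive on $\mathbb{R}^k$, the minimiser $\theta_{[k]}^o$ exists, is unique, and lies in the interior. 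Setting the gradient to zero there gives \eqref{KLproj:classif}.

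For the $L^\infty$ bound I would first control the Hellinger distance between $\theta_{[k]}^o$ and $\theta_0$ by comparing to the trivial candidate $\theta_{0,[k]}$. Since $\beta_0>1/2$, Cauchy--Schwarz gives $\sum_{j>k}|\theta_{0,j}|\lesssim k^{-(\beta_0-1/2)}$ uniformly in $k$, so both $\|f_{\theta_0}\|_\infty$ and $\|f_{\theta_{0,[k]}}\|_\infty$ are uniformly bounded. With both logits bounded, a Taylor expansion of the Bernoulli KL (or Lemma 3.2 of \cite{vvvz08}) yields
$$KL(\theta_0,\theta_{[k]}^o)\leq KL(\theta_0,\theta_{0,[k]})\lesssim d_n^2(\theta_0,\theta_{0,[k]})\leq \|f_{\theta_0}-f_{\theta_{0,[k]}}\|_\infty^2\lesssim k^{-(2\beta_0-1)},$$
and the inequality $h_b^2\leq KL_b$ for Bernoullis delivers $h_n^2(\theta_0,\theta_{[k]}^o)\lesssim k^{-(2\beta_0-1)}$.

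With this Hellinger bound I would then mimic the splitting strategy used in the proof of Lemma \ref{lem: class_Linf}. Suppose for contradiction that $\|f_{\theta_{[k]}^o}\|_\infty>L$ for a large $L$ and split $\{1,\ldots,n\}$ into $I_1=\{i:|f_{\theta_{[k]}^o}(x_i)|\leq L\}$ and $I_2\cup I_3=\{i:|f_{\theta_{[k]}^o}(x_i)|>L\}$. Comparing against the reference $\theta_{0,[K_n]}$ -- which satisfies $\|f_{\theta_{0,[K_n]}}\|_\infty=O(1)$ and $h_n(\theta_0,\theta_{0,[K_n]})\lesssim K_n^{-(\beta_0-1/2)}$ -- the binary Hellinger distance from $q_{\theta_{[k]}^o,i}$ to $q_{\theta_{0,[K_n]},i}$ is bounded below by a constant on $I_2\cup I_3$ (since $q_{\theta_{[k]}^o,i}$ is close to $0$ or $1$ while the reference is bounded away from $0$ and $1$), and by a multiple of $(f_{\theta_{[k]}^o}(x_i)-f_{\theta_{0,[K_n]}}(x_i))^2$ on $I_1$. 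Combining the lower bound
$$nh_n^2(\theta_{[k]}^o,\theta_{0,[K_n]})\gtrsim |I_2\cup I_3|+(\theta_{[k]}^o-\theta_{0,[K_n]})^T\Phi_{I_1}^T\Phi_{I_1}(\theta_{[k]}^o-\theta_{0,[K_n]})$$
with $h_n^2(\theta_{[k]}^o,\theta_{0,[K_n]})\lesssim k^{-(2\beta_0-1)}+K_n^{-(2\beta_0-1)}$ would give $|I_2\cup I_3|\lesssim n(k^{-(2\beta_0-1)}+K_n^{-(2\beta_0-1)})$.

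The hard part will be closing the loop uniformly in $k\leq K_n$, which is why the comparison must be made against $\theta_{0,[K_n]}$ rather than $\theta_{0,[k]}$: for small $k$ the Hellinger bound via $\theta_{0,[k]}$ is too weak. The condition $K_n\gg n^{1/(2(\beta_0-1/2))}$ is precisely what guarantees $nK_n^{-(2\beta_0-1)}=o(1)$, so that the ``bad'' index set $I_2\cup I_3$ is a vanishing fraction of $\{1,\ldots,n\}$, allowing the restricted Gram matrix argument \eqref{eq: small index_mtx} to be rerun at scale $K_n$ and yield $\Phi_{I_1}^T\Phi_{I_1}/n\asymp I$. Transferring the Hellinger bound to an $\ell_2$ bound on $\theta_{[k]}^o-\theta_{0,[K_n]}$ through this restricted design matrix, and using the elementary inequality $\|f_\theta\|_\infty\leq\max_j\|\phi_j\|_\infty\|\theta\|_1$ together with $\|\theta_{0,[K_n]}\|_1\leq\|\theta_0\|_1<\infty$, would then force $\|f_{\theta_{[k]}^o}\|_\infty\leq\|f_{\theta_{0,[K_n]}}\|_\infty+o(1)=O(1)$, contradicting $\|f_{\theta_{[k]}^o}\|_\infty>L$ for $L$ large enough.
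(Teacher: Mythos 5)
Your overall structure matches the paper's: establish the first-order condition for the first claim, then bound $h_n(\theta_{[k]}^o,\theta_0)$ via the KL chain through $\theta_{0,[k]}$, and finally run the good/bad-index splitting with a restricted Gram matrix argument (the paper simply invokes Lemma~\ref{lem: class_Linf} as a black box rather than re-deriving it, but that is the same argument). However, there is a quantitative gap in your Hellinger bound that is fatal exactly in the regime the lemma is supposed to cover.

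You bound $d_n^2(\theta_0,\theta_{0,[k]}) \le \|f_{\theta_0}-f_{\theta_{0,[k]}}\|_\infty^2 \lesssim k^{-(2\beta_0-1)}$ via Cauchy--Schwarz on the $\ell_1$-tail. This loses a full factor of $k$ against what is actually achievable: under the design condition \eqref{assump: reg_design_mtx}, one has $d_n(\theta_{0,[k]},\theta_{0,[K_n]}) \asymp \|\theta_{0,[k]}-\theta_{0,[K_n]}\|_2 \lesssim k^{-\beta_0}$, and the $L^\infty$ tail bound $\lesssim K_n^{-(\beta_0-1/2)}$ is only needed for the truly untrackable remainder beyond $K_n$, where the hypothesis $K_n\gg n^{1/(2\beta_0-1)}$ makes it negligible. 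The paper therefore splits $d_n(\theta_0,\theta_{0,[k]}) \le d_n(\theta_{0,[K_n]},\theta_{0,[k]}) + d_n(\theta_{0,[K_n]},\theta_0)$ and obtains $h_n^2(\theta_{[k]}^o,\theta_0) \lesssim k^{-2\beta_0}+K_n^{-(2\beta_0-1)}$, which is sharper than yours by the factor $k$.

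Why this matters: in the bad-set estimate you propose, you would get $|I_2\cup I_3|\lesssim n\,k^{-(2\beta_0-1)}$, and the restricted Gram matrix argument requires $|I_2\cup I_3|=o(n/k)$. Your bound gives this only if $k^{-(2\beta_0-1)}=o(k^{-1})$, i.e.\ $\beta_0>1$. With the paper's sharper $n\,k^{-2\beta_0}$ the requirement is $\beta_0>1/2$, which is exactly the stated hypothesis. (Equivalently, with $k\asymp(n/\log n)^{1/(1+2\beta_0)}$ the hypothesis of Lemma~\ref{lem: class_Linf} is $h_n^2\lesssim(n/\log n)^{-2\beta_0/(1+2\beta_0)}$; your $k^{-(2\beta_0-1)}$ evaluates to $(n/\log n)^{-(2\beta_0-1)/(1+2\beta_0)}$, which is strictly larger.) So the two-step triangle inequality through $\theta_{0,[K_n]}$ together with the design condition is not an optional refinement---it is what makes the argument close for $\beta_0\in(1/2,1]$. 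Your proposal needs to be corrected to use the $\ell_2$-based bias bound $\|\theta_{0,[k]}-\theta_{0,[K_n]}\|_2\lesssim k^{-\beta_0}$ via \eqref{assump: reg_design_mtx} before any of the rest goes through.
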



\begin{proof}[Proof of Lemma  \ref{lem:help_class}]

Equation \eqref{KLproj:classif} is a direct consequence of the definition of $\theta_{[k]}^o$.

Note that 
\begin{align}
h_n^2(\theta_{[k]}^o , \theta_0) \leq KL(  \theta_0, \theta_{[k]}^o) \leq KL( \theta_0,\theta_{0,[k]}) \lesssim d_n^2( \theta_0,\theta_{0,[k]}).\label{eq: UB:hellinger:class}
\end{align}
We have also that
\begin{equation*}
\begin{split}
d_n( \theta_0,\theta_{0,[k]}) &\leq  d_n( \theta_{0,[K_n]},\theta_{0,[k]})+ d_n(\theta_{0,[K_n]}, \theta_0) \lesssim \|\theta_{0,[K_n]}-\theta_{0,[k]}\|_2 + K_n^{-(\beta-1/2)} \\
 & \lesssim k^{-\beta} + K_n^{-(\beta-1/2)} = O((n/\log n)^{-\beta_0/(2\beta_0+1)})
\end{split}
\end{equation*}
 since $\|\theta_0\|_1<+\infty $ and $k \in  \mathcal K_n(M)$. Thus $h_n(\theta_{[k]}^o , \theta_0) \leq(n/\log n)^{-\beta_0/(2\beta_0+1)}$ and applying Lemma \ref{lem: class_Linf} with $k \lesssim k_n \lesssim (n/\log n)^{1/(2\beta_0+1)} = o((n/\log n)^{2\beta_0/(2\beta_0+1)})$ we obtain that 
 $$\|f_{\theta_{[k]}^o}\|_\infty = O(1).$$

\end{proof} 

\section{Proofs of the remaining results}

\subsection{Proof of Lemma \ref{lem:post:Kn}} \label{sec:proof:lemKn}
Let $m_n(k)  = \int_{\Theta(k)} e^{\ell_n(\theta) - \ell_n(\theta_0) }\pi_{|k}(d\theta)$ and $\eps_n=\eps_n(k_n)$, then we have

\begin{equation*}
\begin{split}
\pi_k(k|\mathbf Y )&= \frac{ \pi_k(k) m_n(k)}{  \sum_{k'}\pi_k(k' )m_n(k')}. 
\end{split}
\end{equation*}
Next we give bounds for the marginal likelihood function, starting with a lower bound for $m_n(k_n)$.

Recall from the proof of Lemma \ref{sec:pr:thprior} that 
\begin{align}
\Omega_{n,0}= \left\{m_n(k_n)>  e^{- (c_{3}+c_4+1) n\eps_n^2}    \right\}\label{eq: LB for denominator theta0}
\end{align}
satisfies $P_{\theta_0}^{(n)}(\Omega_{n,0}^c)\leq (k_n\log n)^{-r/2} =o(1)$. Let $D= c_2+c_{3}+ c_4+3/2$, then  we show below that in view of condition \textbf{A3} (with $\gamma=5/2$) and 
\textbf{A2} () with $\bar K_n = A k_n \log n $ and $A = (c_2+2c_3+2c_4+3)/c_1$,  
\begin{equation}
 P_{\theta_0}^{(n)} \left( \int_{\Theta(k)} e^{\ell_n(\theta) - \ell_n(\theta_0) }d\pi_{|k}(\theta) > e^{-  Dn\epsilon_n^2  } \right) 
 \lesssim e^{-c n\epsilon_n^2  }\label{UB_ like_help}
 \end{equation}
for all $k\leq A k_n \log n$, $k \notin \mathcal K_n(M)$ (with some large enough choice of $M$, for instance $M^2\geq 2M_0\vee 2c_5^{-1}(D+1/2)/(J_1\wedge 1)$ is sufficiently large),  and some $c>0$.  Furthermore, let us introduce the notations $\Omega_n(k) = \{  m_n(k) \leq e^{- D n\epsilon_n^2} \}$ and 
 $$\Omega_n = \Omega_{n,0} \cap_{k <A k_n\log n; k \notin \mathcal K_n(M) } \Omega_n(k) .$$
 Then $P_{\theta_0}^{(n)}\left( \Omega_n^c \right)= o(1) $ since $k_n\log n = o(e^{cn\epsilon_n^2})$ for any $c>0$ and
on $\Omega_n$
\begin{equation*}
\begin{split}
\pi_k(\{k < Ak_n \log n\}\cap \mathcal K_n(M)^c |\mathbf Y ) &\leq  \sum_{k < A k_n\log n}\1_{k \notin \mathcal K_n(M)} \frac{ e^{- D n\epsilon_n^2  }  \pi_k(k)}{\pi_k(k_n) m_n(k_n)}\\
&\lesssim  e^{- D n\epsilon_n^2 +(c_{3}+c_4+1) n\eps_n^2+c_2k_n\log n }\\
& \leq e^{- (D-c_2-c_{3}-c_4-1) k_n \log n}\\
& \leq e^{- (k_n/2) \log n} = o(1) .
\end{split}
\end{equation*}
For $k\geq A k_n \log n$ we also obtain that
 \begin{equation*}
 \begin{split}
 E_{\theta_0}^{(n)}& \pi_k(k \geq A k_n \log n|\mathbf Y )  \\
  &\leq e^{ (c_2+2c_{3}+2c_4+2 ) k_n \log n  } E_{\theta_0}^{(n)}\left(  \sum_{k\geq Ak_n \log n   } \pi_k(k) m_n(k) \right)+ P_{\theta_0}^{(n)}\left( \Omega_{n,0}^c \right) \\
 &\lesssim \pi_k(k \geq Ak_n \log n )  e^{ (c_2+2c_{3}+2c_4+2 ) k_n \log n  }+o(1)\\
 &\lesssim e^{-(c_1 A- c_2-2c_{3}-2c_4-2 ) k_n \log n} + o(1)=o(1).
 \end{split}
 \end{equation*}

It remains to verify $\eqref{UB_ like_help}$ for all $k \notin \mathcal K_n(M)$, $k\leq A k_n \log n$. 
If $b(k) > k(\log n)/n$ we have for all $\theta \in \Theta(k)$ that  $d^2(\theta_0, \theta) \geq b(k)>k (\log n)/n$, hence $d^2(\theta_0, \theta)> \epsilon_n^2(k)/2 \geq M^2 \epsilon_n^2/2$.
If  $b(k) \leq k (\log n)/n$ then $k \geq \frac{n\eps_n^2(k)}{2\log n} \geq \frac{M^2 n \epsilon_n^2}{2\log n}\geq \frac{M^2k_n}{2}$. 
Hence, for all $k \notin \mathcal K_n(M)$ such that $b(k) > k(\log n)/n$ we have 
 \begin{equation}\label{empty}
  \Theta_n(k) \cap B_k(\theta_0,  M^2 \epsilon_n^2/2,d) = \emptyset
  \end{equation}
  and for all $k \notin  \mathcal K_n(M)$ such that $b(k) \leq k(\log n)/n$ we use assumption \textbf{A3}, choosing $\gamma=2$ and $M^2 > 2M_0$.

Then by slightly abusing our notation, consider slices $\Theta_j(k) = \{ j \epsilon_n \leq d(\theta_0, \theta) \leq  (j+1) \epsilon_n\}$, $j\geq J_0(k)$ of $\Theta(k)$, where $J_0(k) =M /\sqrt{2} $ if $b(k) > k(\log n)/n$ and $J_0(k) =  J_1 \sqrt{k(\log n)/n} \epsilon_n^{-1}$ if $b(k) \leq k(\log n)/n$ (note that $J_0(k)\geq J_0$ for lage enough choice of $M$). Let us consider a minimum cover of the slice $\Theta_j(k)$ with $c_6j\eps_n$-radius $d$-balls and  denote by $\{\theta_{ji}$, $i\leq N_{n,j}(k) \}$ a collection of the centers of such balls. Next for each $\theta_{ji}$ consider the individual test $\phi_n(j,i)$ defined in assumption  \textbf{A2} (ii) satisfying
\begin{align*}
E_{\theta_0}^{(n)}\left( \phi_{n}(j,i)  \right) \leq e^{- c_5nj^2 \epsilon_n^2 } ,\\
 \quad \sup_{d(\theta_{ji}, \theta)\leq c_6 j \epsilon_n } E_{\theta}^{(n)}\left(1- \phi_{n}(j,i)  \right) \leq e^{- c_5nj^2 \epsilon_n^2 },
\end{align*}
$j\geq J_0(k)$ and construct $\phi_n(k) = \max_{j\geq J_0(k)} \max_{i\in\{1,...,N_{n,j}(k)\}} \phi_{n}(j,i) $.  
Assumption \textbf{A2} (iii) implies that $\log N_{n,j}(k) \leq c_5 j^2 n\epsilon_n^2/2$ for $k \notin \mathcal K_n(M)$, $k\leq Ak_n\log n$ and for all $j \geq J_0(k)$. Therefore
\begin{align*}
E_{\theta_0}^{(n)}\left( \phi_n(k) \right) \leq \sum_{j\geq J_{0}(k)}  e^{- c_5 nj^2\epsilon_n^2/2 } \leq  2e^{- c_5J_{0}(k)^2n\epsilon_n^2/2},\nonumber\\
 \sup_{\theta \in \Theta_n(k)\cap B_k^c(\theta_0,J_{0}(k)\eps_n,d)} E_{\theta}^{(n)}\left(1- \phi_{n}(k)  \right) \leq e^{- c_5J_{0}(k)^2n\epsilon_n^2}.
\end{align*}
We have for all $b(k)>k(\log n)/n$ with $k\notin \mathcal K_n(M)$, $J_{0}(k)^2n\epsilon_n^2 = M^2 n \epsilon_n^2/2$ and if $b(k)\leq  k(\log n)/n$ 
$$J_{0}(k)^2n\epsilon_n^2 = J_1^2 k \log n \geq \frac{J_1^2 n\eps_n^2(k) }{ 2} \geq \frac{J_1^2 M^2 n\epsilon_n^2 }{ 2} $$ 
so that for all $k \notin \mathcal K_n(M)$ by choosing $M^2\geq 2c_5^{-1}(D+1/2)/(J_1^2\wedge1)$ we have $c_5 J_0(k)^2\geq D+1/2$. Hence by applying Markov's inequality and using assumption \textbf{A2} (i)
\begin{equation} \label{eq: large_k}
\begin{split}
& P_{\theta_0}^{(n)} \left( \int_{\Theta(k)} e^{\ell_n(\theta) - \ell_n(\theta_0) }\pi_{|k}(d\theta) > e^{- D n\epsilon_n^2  } \right)\\
&\leq E_{\theta_0}^{(n)}(\phi_n(k))+ e^{ Dn\epsilon_n^2 }\pi_{|k}\big(\Theta_n(k)\cap B_k(\theta_0, J_0(k)\epsilon_n,d) \big) + \\
&\quad+ e^{ Dn\epsilon_n^2 }\pi_{|k}\left( \Theta_n(k)^c \right) +e^{ Dn\epsilon_n^2 }\int_{\Theta_n(k)\cap B_k^c(\theta_0,J_0(k)\epsilon_n,d)} E_\theta^{(n)}(1-\phi_n(k))\pi_{|k}(d\theta)\\
& \lesssim e^{-n\epsilon_n^2/2} .
\end{split}
\end{equation}
This terminates the proof of Lemma \ref{lem:post:Kn}. 

We note that the above computations actually imply that 
 \begin{equation}
 \sup_{\theta_0\in \bar\Theta} E_{\theta_0}^{(n)}\big( \pi_k( k \notin \mathcal K_n(M) |\mathbf Y )\big) \lesssim1/(n\eps_n^2). \label{eq: post:modelselect:asymptotic}
 \end{equation}

\subsection{Proof of Lemma \ref{lem:rate}}\label{sec:pr:lemrate}
In view of Lemma \ref{lem:post:Kn}
\begin{align*}
E_{\theta_0}^{(n)}&\pi\big( d(\theta, \theta_0)>\bar{M} \epsilon_n|\mathbf Y \big)\\
&  \leq E_{\theta_0}^{(n)}\pi\big( \{d(\theta, \theta_0)>\bar{M} \epsilon_n\} \cap \{k \in \mathcal K_n(M)\}  |\mathbf Y \big)  +\eps/2,
\end{align*}
for sufficiently large $M>0$ uniformly over $\bar\Theta$, where we use again the abbreviation $\eps_n=\eps_n(k_n)$.


As in the proof of Lemma \ref{lem:post:Kn} define the tests $\phi_n(j,i)$ with $j \geq\bar{M}$,
$$\phi_n = \max_{k \in \mathcal K_n(M)}\max_{j \geq \bar{M}} \max_{i\leq N_{n,j}(k)} \phi_n(j,i), \quad N_{n,j}(k) \leq \exp( c_5 n j^2 \epsilon_n^2/2).$$
We have, in view of Lemma \ref{rem:Kn}, that $k\leq 2M^2k_n$ for every $k\in\mathcal{K}_n(M)$ hence in view of condition \textbf{A2} (ii)
$$E_{\theta_0}^{(n)}(\phi_n)\leq \sum_{k\in \mathcal K_n(M)} \sum_{j\geq \bar{M}} e^{-c_5 n j^2 \epsilon_n^2/2} \leq 4 M^2 k_n \exp\Big\{\frac{-c_5 n \bar{M}^2 \epsilon_n^2}{2}\Big\}$$
and for $\theta\notin B_k(\theta_0, \bar{M} \epsilon_n ,d)$ with $k\in\mathcal{K}_n(M)$ 
$$E_{\theta}^{(n)}(1-\phi_n)\leq \exp\Big\{-c_5 n \bar{M}^2 \epsilon_n^2\Big\}.$$
Then, in view of the above assertions together with \eqref{eq: LB for denominator theta0}
\begin{equation*}
\begin{split}
E_{\theta_0}^{(n)}\big( \pi( d(\theta, \theta_0)>\bar{M} \epsilon_n& |\mathbf Y ) \big) \leq P_{\theta_0}^{(n)}(\Omega_{n,0}^c ) +E_{\theta_0}^{(n)}(\phi_n)+\eps/2 \\
& + 
 \exp\big\{  -\big( c_5  \bar{M}^2 - c_2-c_{3}-c_4-1\big)n\epsilon_n^2  \big\}
\end{split}
\end{equation*}
which is bounded from above by $\eps$ for a sufficiently large choice of $\bar{M}$. 
 
Here we also note that the above computations in view of \eqref{eq: post:modelselect:asymptotic} actually imply
\begin{align}
\sup_{\theta_0\in \bar\Theta} E_{\theta_0}^{(n)}\big(\pi\left( d(\theta, \theta_0)\geq C_{\eps}\epsilon_n|\mathbf Y \right)\big)\leq C/(n\epsilon_n^2),\label{eq: post:contraction:asymptotic}
\end{align}
for some sufficiently large constant $C>0$.

\subsection{Proof of Theorem \ref{th:EmpBayesCoverage}}\label{sec:proof:covEB}
First we show that for large enough $M>0$
\begin{align}
P_{\theta_0}^{(n)}(\hat{k}_n\notin \mathcal{K}_n(M) )\leq \eps.\label{eq: MMLE}
\end{align}
In view of \eqref{eq: large_k}, with $D$ replaced by $c_2+c_{3}+ c_4+3/2+H$ using assumption \textbf{A3} with $\gamma = H+5/2$
\begin{equation*}
\begin{split}
 P_{\theta_0}^{(n)} \left(\sup_{k\notin\mathcal{K}_n(M), k\leq \bar K_n}m_n(k) > e^{- (c_2+c_{3}+ c_4+3/2+H) n\epsilon_n^2  } \right)
& \lesssim  \bar K_n e^{-(H+1/2)n\epsilon_n^2}\\
& \leq n^H e^{-(H+1/2)k_n\log n}
\end{split}
 \end{equation*}
 where the right hand side tends to zero for $k_n\geq 1$. Furthermore, in view of \eqref{eq: LB for denominator theta0} we get that 
$$P_{\theta_0}^{(n)}\Big(\sup_{k\notin\mathcal{K}_n(M)}m_n(k)> m_n(k_n)\Big)=o(1)$$
leading to \eqref{eq: MMLE}.

Next by using the notation \eqref{def:hyper_post_prob} and following from \eqref{eq: UB_hyper_post} and \eqref{eq: MMLE} we have with $P_{\theta_0}^{(n)}$-probability at least $1-\eps$
\begin{align*}
  \pi_{|\hat k_n}\left( \{d(\theta, \hat \theta_n)\leq \rho_n \epsilon_n \}\cap \Theta_n |\mathbf Y \right) 
\leq  \sum_{k\in\mathcal{K}_n(M)} \pi_{n,k}	\leq C k_n e^{-c'k_n\log \delta_\eps^{-1}} \leq\eps,
 \end{align*}
for sufficiently small choice of $\delta_\eps>0$.
Then the proof of the first statement automatically follows from the proof of Theorem \ref{th:coverage}. The proof of the second statement follows by similar lines of reasoning as above combined with the proof of Lemma \ref{lem:rate}.

\subsection{Proof of Lemma \ref{rem:Kn}} \label{sec:rem:Kn}

Note that if $k \in \mathcal K_n(M)$, then 
 $$b(k) + \frac{k\log n }{ n } \leq M^2 \left( b(k_n) +\frac{k_n\log n }{ n }\right)\leq 2M^2 \frac{k_n\log n }{ n },$$
 so that $k \leq 2M^2 k_n$  verifying the first statement of the lemma.

Now assume that $\theta_0\in  \Theta_{0,n}(R_0, k_0,\tau)$ and that $k_n > 2R_0^{m+1}\vee R_0^m k_0 A_0$. Then first note that for all $\delta >0$ and all $\delta k_n \leq k < k_n$ then 
 $$b(k) > \frac{ k \log n }{ n} \geq \frac{ \delta k_n \log n}{ n} \geq \frac{ \delta \epsilon_n^2(k_n )}{2}, $$
and for all $k_0\leq k \leq R_0^{-m} (k_n-1)$ we have $b(R_0^m k ) \leq \tau^m b(k) $.

Let us distinguish three cases $R_0^{-m-1}(k_n-1)- 1\leq k\leq R_0^{-m} (k_n-1)$, $ k_0 \leq k< R_0^{-m-1}(k_n-1)-1$ and $k<k_0$ and we show that in each case $k\notin \mathcal{K}_n(M)$.

If $k \geq  R_0^{-m-1}(k_n-1)- 1$ then 
$$R_0^m k \geq (k_n-1)/R_0-R_0^m\geq \frac{ k_n-1}{ 2 R_0} \geq \frac{ k_n }{ 4 R_0} $$  so that 
$$b(R_0^m k) \geq \frac{k_n \log n}{4 R_0 n} \geq \frac{  \epsilon_n^2(k_n )}{8R_0} , \quad \text{and} \quad b(k) \geq \tau^{-m}\frac{  \epsilon_n^2(k_n )}{8R_0} \geq M^2\epsilon_n^2(k_n ),$$
hence $k\notin \mathcal{K}_n(M)$.

If $ k_0 \leq k< R_0^{-m-1}(k_n-1)-1$, define $j^*(k) = \min\{ j:\, R_0^j k \geq R_0^{-m-1}(k_n-1)- 1\}$, so that 
 $$k_n -1  - R_0^{m+1} > R_0^{m+j^*(k)} k > \frac{ k_n-1}{R_0}-R_0^m > \frac{ k_n}{4R_0}$$
 which implies that 
 $$b( R_0^{m+j^*(k)} k ) \geq \frac{k_n \log n}{4R_0 n} \geq\frac{  \epsilon_n^2(k_n )}{8R_0} \quad \text{and} \quad b(k) \geq \tau^{ -j^*(k)} M^2\epsilon_n^2(k_n ),$$
and therefore $k\notin \mathcal{K}_n(M)$.

If $k\leq k_0$ then by assumption \eqref{eq: cond:bias} the inequality $b(k) \geq b(k')$ holds for some $k_0 \leq k' < k_0 A_0$. Since
 $k_0 A_0\leq R_0^{-m} (k_n-1)$ we get $b(k') \geq  M^2\epsilon_n^2(k_n )$, which again concludes that $k\notin \mathcal{K}_n(M)$.

Finally, note that if $k_n\leq 2R_0^{m+1}\vee R_0^m k_0 A_0$ then $1\geq k_n/(2R_0^{m+1}\vee R_0^m k_0 A_0)$.

\subsection{Proof of Remark \ref{rem: polished:tail:regression}}\label{sec: polished:tail:regression}

First of all note that
$$b(k)\leq d_n^2(\theta_0, \theta_{0,[k]}) \leq \frac{ 2 }{ n } ( \theta_{0,[K_n]} - \theta_{0,[k]})^T \Phi_{K_n}^T\Phi_{K_n} ( \theta_{0,[K_n]} - \theta_k) + 2d_n^2(\theta_{0,[K_n]} , \theta_0)$$
where $\theta_{0,[k]} $ in $\mathbb R^{K_n}$ is to be understood as the completion of $\theta_{0,[k]}\in\mathbb{R}^k$ by zeros. 
We then have, 
\begin{align*}
b(R_0 k ) \leq 2 C_0 \| \theta_{0,[K_n]} - \theta_{0,[R_0k]}\|_2^2 + 2d_n^2(\theta_{0,[K_n]} , \theta_0)
\end{align*}

 and as soon as $K_n \geq R_0^{a_n} k$ for some sequence $a_n$ tending to infinity arbitrarily slowly   
\begin{equation*}
\begin{split}
b(R_0 k ) &\leq 2 C_0\tau_1 \| \theta_{0,[K_n]} - \theta_{0,[k]}\|_2^2 + 2C_0(1-\tau_1)\| \theta_{0,[K_n]} - \theta_{0}\|_2^2+ 2d_n^2(\theta_{0,[K_n]} , \theta_0)\\
&\leq 2 C_0\tau_1(1+\tau_1^{a_n-1}) \| \theta_{0,[K_n]} - \theta_{0,[k]}\|_2^2 +2d_n^2(\theta_{0,[K_n]} , \theta_0).
\end{split}
\end{equation*}

We show below that in the fixed design regression case for $K_n\geq n^{\frac{1}{2(\beta_0-1/2)}}$ and in the random regression case (for every $K_n$ tending to infity)  with $\nu$-probability at least $1-\eps$ (for arbitrary $\eps>0$)
\begin{align}
d_n^2(\theta_{0,[K_n]},\theta_0)=o(\tau_1 \|\theta_{0,[K_n]}-\theta_{0,[k]}\|_2^2)\label{eq: help:PT}
\end{align}
holds.
Then by noting that
\begin{equation*}
\begin{split}
\| \theta_{0,[K_n]} - \theta_{0,[k]}\|_2^2 &\leq \| \theta_{0,[K_n]} - \theta_{[k]}^o\|_2^2 \leq C_0 d_n^2(\theta_{0,[K_n]}, \theta_{[k]}^o)\\
& \leq 2C_0[b(k)  + d_n^2(\theta_0, \theta_{0,[K_n]})]\\
& \leq 2C_0[b(k)  + o(\tau_1\| \theta_{0,[K_n]} - \theta_{0,[k]}\|_2^2)]
\end{split}
\end{equation*}
we have $b(R_0k) \leq 5C_0^2 \tau_1 b(k)$ for all $ k_0\leq k \leq R_0^{-a_n} K_n$ with $a_n $ going to infinity arbitrarily slowly. Therefore $f_{\theta_0}$  satisfies the polished tail condition associated to $d_n(.,.)$ with $\tau = 5C_0^2 \tau_1$ as soon as $\tau_1 < 1/(5C_0^2)$ (in the random design regression case this holds with $\nu$-probability arbitrarily close to one).

It remained to prove assertion \eqref{eq: help:PT}. 
In the random design regression case let $C>0$ be an upper bound on the density of $\nu$, then 
 $$ \mathbb \nu[ d_n(\theta_{0,[K_n]} , \theta_0) \geq t ] \leq  C\frac{ \| \theta_{0,[K_n]} - \theta_{0}\|_2^2 }{ t^2 } $$
so that with probability greater than $1-\epsilon$ , $d_n^2(\theta_{0,[K_n]} , \theta_0)\leq C\epsilon^{-1} \| \theta_{0,[K_n]} - \theta_{0}\|_2^2$ and
\begin{equation*}
\begin{split}
b(R_0 k ) &\leq 2 C_0\tau_1(1+(1+C\eps^{-1})\tau_1^{a_n-1}) \| \theta_{0,[K_n]} - \theta_{0,[k]}\|_2^2 .
\end{split}
\end{equation*}
While in the fixed design regression case  we have
\begin{align*}
\|\theta_{0,[k]}-\theta_{0,[K_n]}\|_2^2&\asymp d_n(\theta_{0,[k]},\theta_{0,[K_n]})\geq b(k)/2-b(K_n)\\
&\geq k(\log n)/n-K_n^{-2(\beta_0-1/2)}\gg K_n^{-2(\beta_0-1/2)}\geq b(K_n),
\end{align*}
finishing the proof of \eqref{eq: help:PT}, where the last inequality follows from Remark \ref{rem:Kn2}  in the main text \cite{rousseau:szabo:16:main}. 

  In particular it implies that if $k_n \leq R_0^{-a_n} K_n$ for some sequence $a_n $ going to infinity, then $b(K_n) \leq  \epsilon^{-1} \| \theta_{0,[K_n]} - \theta_{0}\|_2^2 \leq \epsilon^{-1} \tau_1^{a_n} \| \theta_{0,[k_n]} - \theta_{0}\|_2^2 \lesssim \tau_1^{a_n}\epsilon^{-1}b(k_n) \leq \delta K_n (\log n)/n$. In the random design regression case this holds with $\nu$-probability greater than $1-\epsilon$.

\subsection{Lepski's method for choosing the centering point}
Let us consider the Gaussian sequence model
\begin{align*}
X_i=\theta_{0,i}+n^{-1/2}Z_i,\quad Z_i\stackrel{iid}{\sim}N(0,1),\,i=1,2,....
\end{align*}
We endow the parameter $\theta\in\ell_2$ with a prior $\prod_{i=1}^{k}g\otimes \delta_0\times\delta_0\times...$, where $g$ satisfies \eqref{eq: sieve_prior}, and choose $k$ via the maximum marginal likelihood estimator. 
Next we describe (a version of) Lepski's method for constructing the center of the credible set. Let us introduce the notation $\hat\theta_{[k]}=(X_1,...,X_{k},0,...)$ for the truncation estimator at level $k$ of the data $X=(X_1,X_2,...)$. Then Lepski's estimator for the optimal truncation level is
\begin{align*}
\tilde{k}=\min_{k\geq \log n}\{\|\hat\theta_{[k]}-\hat\theta_{[j]}\|_2^2\leq t j/n, \quad\forall  k\leq j\leq n\},
\end{align*}
for some parameter $t>0$ to be specified later. We denote by $\tilde{\theta}=\hat{\theta}_{[\tilde{k}]}$ the corresponding Lepski's estimator.
Then we consider the credible ball $\hat{C}=\{\theta\in\Theta(\hat{k}_n):\, \|\theta-\tilde\theta\|_2\leq r(\hat{k}_n) \}$, where $\hat{k}_n$ is the MMLE, the radius $r(\hat{k}_n)$ is chosen such that the set $\hat{C}$ accumulates $95\%$ of the empirical Bayes posterior mass (i.e. $\pi_{\hat{k}_n}(\hat{C}|X)= 0.95$) and we again enlarge the set with a multiplicative factor $L\geq 1$. We  show below that under the polished tail condition the so constructed set $\hat{C}(L)$ achieves good frequentist coverage for a constant multiplicative factor $L$.

\begin{lemma}\label{lem: Lep:center}
In the Gaussian sequence model the credible set $\hat{C}(L)$ constructed above satisfies for some large enough $L$ that
\begin{align*}
\inf_{\theta_0\in\Theta_0}P_{\theta_0}( \theta_0\in \hat{C}(L))\geq 0.95.
\end{align*}
\end{lemma}

\begin{proof}
First of all let us introduce the notation
$$k^*=\max_{k\geq \log n}\{ \sum_{i=k+1}^{\infty}\theta_{0,i}^2\geq k/n\}.$$
From the proof of Lepski's method (e.g. \cite{lepski:90,gine:nickl:2016}) we know that $P_{\theta_0}(\tilde{k}>k^*+1)=o(1)$ and $E_{\theta_0}\|\theta_0-\hat\theta_n\|_2^2\lesssim k^*/n$. Therefore it remained to show that the radius of the credible set is bounded from below by constant times $k^*/n$.

 First we show that under the polished tail condition we also have with high probability that $\tilde{k}\gtrsim k^{*}$.  Note that in view of the definition of $k^*$ and the polished tail condition we have
\begin{align*}
\tau^{-r}k^* /n\leq  \tau^{-r}\sum_{i=k^*+1}^{\infty}\theta_{0,i}^2\leq \sum_{i=R_0^{-r}k^*+1}^{\infty}\theta_{0,i}^2,
\end{align*}
for arbitrary $r\in\mathbb{N}$ satisfying that $k^*R_0^{-r}\geq k_0$.
Therefore by using the inequality $(a+b)^2\geq  a^2/2 -b^2$ we get that
\begin{align*}
P_{\theta_0}( \tilde{k}\leq k^* R_0^{-r})&\leq P_{\theta_0}\big( \sum_{i=R_0^{-r}k^*+1}^{k^*}X_i^2\leq tk^*/n\big)\\
&\leq P_{\theta_0}\big(n^{-1}\sum_{i=1}^{k^*}Z_i^2\geq 2^{-1}\sum_{i=R_0^{-r}k^*+1}^{k^*}\theta_{0,i}^2-tk^*/n\big)\\
&\leq P_{\theta_0}\big(\sum_{i=1}^{k^*}Z_i^2\geq ((\tau^{-r}-1)/2-1-t)k^*\big)
=o(1),
\end{align*}
by taking $r$ large enough ($\tau^{-r}> 2(2+t)+1$ is sufficiently large), following from the standard concentration inequality on chi-square distributions,  see for instance Theorem 4.1.9 of \cite{gine:nickl:2016}. For convenience let us introduce the notation $c^*=(\tau^{-r}-1)/2-1-t>0$. Furthermore, recall that under the polished tail condition the MMLE estimator $\hat{k}_n$ satisfies that $P_{\theta_0}(c_1k_n\leq\hat{k}_n\leq c_2 k_n)=1-o(1)$, for some positive constants $c_1$ and $c_2$.

Next let us consider two cases depending on the relationship of $k_n$ and $k^*$. If $k_n\geq c^*k^*/(2c_2)$, then since the squared radius of the credible ball is bounded from below by a multiple of $k_n/n\gtrsim k^*/n\gtrsim E_{\theta_0}\|\theta_0-\hat\theta_n\|_2^2$, the true parameter $\theta_0$ will be inside of the inflated credible ball, for large enough inflation factor $L>0$, with high probability. For $k_n\leq c^*k^*/(2c_2)$ we have
\begin{align*}
\inf_{\theta\in \mathbb{R}^{\hat{k}_n}}\|\hat\theta-\theta\|_2^2\geq \inf_{\theta\in \mathbb{R}^{c_2 k_n}}\|\hat\theta-\theta\|_2^2\geq  \sum_{i=c_2k_n+1}^{ c^*k^*}X_i^2
\geq \sum_{i=(c^*/2)k^*+1}^{ c^*k^*}X_i^2,
\end{align*}
with probability tending to one. Furthermore, in view of Anderson's lemma we have that
\begin{align*}
P_{\theta_0}\Big( \sum_{i=(c^*/2)k^*}^{ c^*k^*}(Z_i+\theta_{0,i})^2\leq  (c^*/4)k^* \Big)\leq P_{\theta_0}( \sum_{i=(c^*/2)k^*}^{ c^*k^*}Z_i^2\leq  (c^*/4)k^* ),
\end{align*}
where the right hand side tends to zero, again by using the standard concentration inequality on the chi square distribution. Hence the squared radius of the credible ball is at least $(c^*/4)k^*/n$ as the posterior puts all of its mass to the space $\mathbb{R}^{\hat{k}_n}$ (more precisely to the space $\mathbb{R}^{\hat{k}_n}\times\delta_0\times\delta_0\times...$), finishing the proof of our statement.

\end{proof}

\subsection{ Proof of Equation \ref{LB:para} of Section \ref{parametric} } \label{pr:LB:para}

We consider the following set of assumptions: $\mathbf Y = (Y_1,\cdots, Y_n) $ with $Y_i \stackrel{iid}{\sim} f_{\theta_0}$.
Assume that $\theta_0 \in \Theta(k_0) $ for some $k_0 >1$ and that for all $k <k_0 $ there exists a unique $\theta_{[k]}^o $  minimizing the Kullback-Leibler divergence between $f_{\theta_0}$ and $f_\theta$, $\theta \in \Theta(k)$, 
\begin{itemize}
\item (i) For all $\epsilon > 0$ 
 $$ \Pi( \| \theta - \theta_0\| <\epsilon |\mathbf Y) = 1 + o_{P_{\theta_0}}(1).$$
 \item (ii) For all $k$, $\theta \rightarrow \log f_\theta(y) $ is twice continuously differentiable on $\Theta(k) $ and there exists $\delta_0>0$ such that for all $k$, on $\Theta(k)$, 
 \begin{equation*}
 E_{\theta_0} \left( \sup_{\| \theta' - \theta_{[k]^o} \| \leq \delta} \|  \frac{ \partial^2\log f_{\theta'}(Y_1) }{\partial^2 \theta'}\| \right) <\infty. 
 \end{equation*} 
 \item (iii) For all $k$
  $$ \ell_n(\hat \theta_k) - \ell_n( \theta_{[k]}^o ) = O_{P_{\theta_0}}(1).$$ 
  \item (iv) For all $k$, $\pi_{|k}$ has positive and continuous density with respect to Lebesgue measure at $\theta_{[k]}^o$. 
\end{itemize} 
Recall that 
\begin{align}\label{assumpsions:parametric}
 \inf_{\theta \in \Theta(k_1) } \| \theta_0 - \theta\| =  \delta \sqrt{ (\log n)/n}  , \quad 
\min_{k <k_1}  \inf_{\theta \in \Theta(k) } \| \theta_0 - \theta\| \geq  C \sqrt{ (\log n)/n}.
\end{align}

Under the above set of conditions for all $k$, if $\| \theta - \theta_{[k]}^o\| =o(1) $, 
 $$ \ell_n( \theta ) - \ell_n(\theta_{[k]}^o) = (\theta - \theta_{[k]}^o)^T  \nabla \ell_n(\theta_{[k]}^o) -  \frac{ n  (\theta - \theta_{[k]}^o)^T I_k(\theta_0) (\theta - \theta_{[k]}^o) (1 +o_{P_{\theta_0}}(1)) }{ 2  } $$ 
and  the BIC formula holds together with the local Bernstein-von Mises theorems:
 \begin{equation}\label{BIC}
 \begin{split}
 \log m_n(k) &= \ell_n(\hat \theta_k) - \ell_n(\theta_0) - \frac{ d_k \log n}{2 } + O_{P_{\theta_0}}(1) ,\\
 \pi_{|k}(\sqrt{n}(\theta - \hat \theta_k) \in A | \mathbf Y)& = P(Z_k \in A) + o_{P_{\theta_0}}(1),  
 \end{split}
 \end{equation} 
 where 
 $$ m_n(k) = \int_{\Theta(k) } e^{\ell_n(\theta) - \ell_n(\theta_0)} d\pi_{|k}(\theta) , \quad Z_k \sim \mathcal N(0, I_k(\theta_0)), $$
and $I_k(\theta_0) $ is the local (misspecified) Fisher information, i.e. the limit of $-\partial^2\ell_n(\theta_{[k]}^o)/n$ under $P_{\theta_0}$. Let  $k > k_1$ 
\begin{align*}
\frac{ \pi_k(k | \mathbf Y)  }{ \pi_k(k_1 | \mathbf Y) }& = \frac{ \pi_k(k) m_n(k) }{ \pi_k(k_1) m_n(k_1) } = \exp \left( \ell_n(\hat \theta_{k} )-  \ell_n(\hat \theta_{k_1}) - \frac{ (d_k - d_{k_1})\log n }{ 2 } + O_{P_{\theta_0}}(1) \right) \\ 
&  =  \exp \left( \ell_n(\theta_{[k]}^o )-  \ell_n( \theta_{[k_1]}^o) - \frac{ (d_k - d_{k_1})\log n }{ 2 } + O_{P_{\theta_0}}(1) \right) \\
& = \exp\Big(  - \frac{ (d_k - d_{k_1})\log n }{ 2 }   -(\theta_{[k_1]}^o - \theta_{[k]}^o)^T  \nabla \ell_n(\theta_{[k]}^o) \\
&\qquad  + \frac{ n  (\theta_{[k_1]}^o - \theta_{[k]}^o)^T I_k(\theta_0) (\theta_{[k_1]}^o - \theta_{[k]}^o) (1 +o_{P_{\theta_0}}(1)) }{ 2  }  + O_{P_{\theta_0}}(1) \Big) \\
& \leq \exp\Big( C_1 \delta^2 \log n - \frac{ (d_k - d_{k_1})\log n }{ 2 } + O_{P_{\theta_0}}(1) \Big),
\end{align*}
for some $C_1>0$ so that if $\delta$ is small enough 
 $$
 \frac{ \pi_k(k | \mathbf Y)  }{ \pi_k(k_1 | \mathbf Y) } \lesssim  n^{ - (d_k - d_{k_1}- 2C_1\delta^2 +O_{P_{\theta_0}}(1))/2 }.
 $$
 If $k < k_1$ using a similar argument we obtain that if $C$ is large enough in \eqref{assumpsions:parametric}, 
 $$
 \frac{ \pi_k(k | \mathbf Y)  }{ \pi_k(k_1 | \mathbf Y) } \lesssim  n^{ - \tau C^2 }, 
 $$
for some small enough $\tau>0$. It implies in particular that 
 $$ \pi_k(k_1|\mathbf Y) = 1 + o_{P_{\theta_0}}(1) , \quad \hat \theta = \hat \theta_{k_1} + o_{P_{\theta_0}}(1/\sqrt{n} ) ,$$
 and 
 $$ r_\alpha \asymp n^{-1/2}, $$ 
 so that 
 \begin{equation*} 
 P_{\theta_0}^{(n)}\left( \theta_0 \in \hat C(L_n,\alpha) \right) \leq P_{\theta_0}^{(n)}\left(\|\theta_0 - \hat \theta_{k_1}\|\leq 2 r_\alpha L_n \right) = o(1) 
 \end{equation*}  
 as soon as $L_n = o(\sqrt{\log n})$.

\end{document}